\tikzset{
    >=stealth',
    pil/.style={
           ->,
           thick,
           shorten <=2pt,
           shorten >=2pt,}
}
\numberwithin{equation}{section}
\def \config{\Omega_{\Lambda}}
\def \pconfig{\Omega_{\mathcal{B}(\Lambda)}}
\def \conc{\bullet}
\def \S{\mathcal{S}}
\newcommand \init[1]{#1^{(\mathrm{init})}}
\newcommand \fin[1]{#1^{(\mathrm{fin})}}
\newcommand \flowtyp[1]{F^{(\mathrm{pt})}_{#1}}
\newcommand \flowna[1]{F^{(\mathrm{na})}_{#1}}
\newcommand \flowfull[1]{F_{#1}}
\newcommand \flownew[1]{F^{(\mathrm{mid})}_{#1}}
\newcommand \flowtrunc[2]{F^{(#2)}_{#1}}
\newcommand \flowtot[1]{F^{(\mathrm{pt})}_{#1}}
\newcommand \res[2]{#2|_{#1}}
\def \rect{\mathbf{R}}
\def \edges{\mathbf{E}}
\newcommand \longnewanc[2]{G^{(\mathrm{long, new})}(#1,#2)}
\newcommand \recrit[1]{\mathcal{R}_{n}}
\def \be{\begin{equs}}
\def \ee{\end{equs}}
\def \P{\mathbb{P}}
\def \E{\mathbb{E}}
\def \T{\mathcal{T}}
\def \G{\mathcal{G}}
\def \t{\theta}
\def \L{\mathrm{L}}
\def \m{n}
 \let\b=\beta     
 \let\g=\gamma \let\h=\eta      
         \let\p=\pi  
  \let\s=\sigma \let\t=\tau   
   \let\G=\Gamma  \let\L=\Lambda 
\let\O=\Omega
\renewcommand{\leq}{\;\leqslant\;}                   
\renewcommand{\geq}{\;\geqslant\;}                   
\newcommand{\inftwo}[2]{\inf_{\substack{#1 \\ #2}}} 
\renewcommand{\b}{\beta}
\newcommand{\1}{\mathds{1}}
\newcommand{\var}{\operatorname{Var}}
\newcommand{\tmix}{T_{\rm mix}}
\newcommand{\gap}{{\rm gap}}
\newcommand{\N}{\mathbb N}
\newcommand{\cB}{\ensuremath{\mathcal B}}
\newcommand{\cD}{\ensuremath{\mathcal D}}
\newcommand{\cG}{\ensuremath{\mathcal G}}
\newcommand{\cL}{\ensuremath{\mathcal L}}
\newcommand{\cN}{\ensuremath{\mathcal N}}
\newcommand{\cQ}{\ensuremath{\mathcal Q}}
\newcommand{\cR}{\ensuremath{\mathcal R}}
\newcommand{\cS}{\ensuremath{\mathcal S}}
\newcommand{\bbE}{{\ensuremath{\mathbb E}} }
\newcommand{\bbP}{{\ensuremath{\mathbb P}} }
\newcommand{\bbR}{{\ensuremath{\mathbb R}} }
\newcommand{\bbZ}{{\ensuremath{\mathbb Z}} }
\newcommand{\Z}{{\ensuremath{\mathbb Z}} }
\definecolor{WowColor}{rgb}{.75,0,.75}
\definecolor{SubtleColor}{rgb}{0.9,0,0}
\newcounter{margincounter}
\newcounter{latercounter}
\newtheorem{theorem}{Theorem}[section]
\newtheorem{lemma}[theorem]{Lemma}
\newtheorem{corollary}[theorem]{Corollary}
\theoremstyle{plain}
\newtheorem{thm}{Theorem}
\newtheorem*{thm-non}{Theorem}
\theoremstyle{definition}
\newtheorem{defn}[theorem]{Definition}
\newtheorem{remark}[theorem]{Remark}
\DeclareMathOperator*{\argmin}{arg\,min}
\begin{document}

\begin{frontmatter}
\title{Cutoff for the Square Plaquette Model on a Critical Length Scale}
\runtitle{Cutoff for the Square Plaquette Model}


\begin{aug}
\author{\fnms{Paul} \snm{Chleboun}\ead[label=e1]{paul.i.chleboun@warwick.ac.uk}}
\and
\author{\fnms{Aaron} \snm{Smith}\thanksref{t2}\ead[label=e2]{asmi28@uottawa.ca}}
\thankstext{t2}{Supported by a grant from NSERC.}
\runauthor{Chleboun and Smith}
\affiliation{
	University of Warwick and
	University of Ottawa\thanksmark{m2}}
\address{
	Department of Statistics,\\
	University of Warwick,  \\
	Coventry,  CV4 7AL \\  United Kingdom \\ \printead{e1}}
\address{
	Department of Mathematics and Statistics\\
	University of Ottawa \\
	585 King Edward Avenue,  Ottawa\\
	ON K1N 7N5 \\ Canada  \\\printead{e2}}
\end{aug}

\maketitle


\begin{keyword}[class=AMS]
\kwd[Primary ]{60J10}
\kwd[; secondary ]{60J20}
\end{keyword}

\begin{keyword}
\kwd{Markov Chain, Mixing Time, Spectral Gap, Cutoff Phenomenon, Plaquette Model, Glass Transition}
\end{keyword}

\begin{abstract}

Plaquette models are short range ferromagnetic spin models that play a key role in the dynamic facilitation approach to the liquid glass transition. In this paper we study the dynamics of the square plaquette model at the smallest of the three critical length scales discovered in \cite{Chleboun2017}.
Our main result is that the plaquette model with \textit{periodic} boundary conditions,  on this length scale, exhibits a sharp transition in the convergence to equilibrium, known as cutoff. This substantially refines our coarse understanding of mixing from previous work \cite{ChlebounSpm2018}.

\end{abstract}

\end{frontmatter}


\setcounter{tocdepth}{1}
\tableofcontents

\section{Introduction} \label{sec:intro}

In this paper we consider the dynamics of the square plaquette model (SPM) at low temperature.
Spin plaquette models were originally associated with glassy behaviour in \cite{Garrahan2002b,Newman1999}, where it was argued that they have more physically-realistic dynamics and thermodynamic properties than kinetically-constrained models, while remaining mathematically tractable (see \cite{Garrahan2010} for a review and \cite{Martinelli2019} for recent references on KCMs).
Understanding the liquid-glass transition and the dynamics of amorphous materials remains a significant challenge in condensed matter physics (for a review see \cite{Berthier2011a}).

Plaquette models are defined over an integer lattice $\Lambda \subseteq \Z^{d}$, and configurations of the plaquette model correspond to $\pm 1$-valued labellings of the lattice $\Lambda$. Every configuration of the SPM has an associated energy given by short range ferromagnetic interactions that are defined in terms of the  \textit{plaquettes}, a collection of subsets of $\Z^{2}$ denoted by $\mathcal{P}$.
Formally, plaquette models are families of probability distributions on $\{-1,+1\}^{\Lambda}$ described as follows. Every configuration $\sigma \in \{-1,+1\}^{\Lambda}$ has an associated energy value; this is (roughly) given by the Hamiltonian \[H_\L(\sigma) = -\frac{1}{2}\sum_{P\in \mathcal{P}}\prod_{x \in P}\sigma_x\,,\]
(the actual Hamiltonian will also depend on the boundary conditions chosen).
 For any fixed inverse-temperature $\beta > 0$, we then associate to this Hamiltonian the probability distribution $\pi_{\Lambda}^\beta$ on $\{-1,1\}^{\Lambda}$ given by $\pi_{\Lambda}^\beta(\sigma) \propto \textrm{exp}(-\beta H_\Lambda(\sigma))$.  In the case of the SPM, we always take $\Lambda \subset \Z^{2}$ and the plaquettes are exactly the collection of unit squares which intersect $\Lambda$.
Despite the relatively simple form of the Hamiltonian, the thermodynamics of these measures is non trivial for the SPM \cite{Chleboun2017}.
Although there is a unique infinite volume Gibbs measure for each $\beta$, static correlation lengths grow extremely quickly as $\beta \to \infty$, and for certain boundary conditions the ground states (configurations $\sigma$ that minimise $H(\sigma)$) are highly degenerate. 
It turns out that this plays an important role for the dynamics of the process.

In this paper we study  the associated \textit{continuous-time single-spin Glauber dynamics} (also known as the \textit{Gibbs sampler}). Roughly speaking, starting from a configuration $\s$, the local configuration $\s(x)$ at each site $x \in \Lambda$ updates at unit rate by choosing a new value according to  $\pi_{\Lambda}^{\beta}$ conditioned on $\{\sigma(y)\}_{y \in \Lambda \backslash \{x\}}$.
The dynamics of plaquette spin models has been the focus of several works in the physics literature. Initial simulations clearly indicate the occurrence of glassy dynamics (extremely slow relaxation) at low temperature \cite{Jack2005a,Newman1999}. For SPMs these results have recently been confirmed rigorously by the authors \cite{ChlebounSpm2018}.  

The products $\prod_{x \in P} \sigma_{x}$ of the spins over the individual plaquettes $P \in \mathcal{P}$ play an important role in studying these dynamics. These products  are called the \emph{plaquette variables}, and a plaquette variable equal to $-1$ is said to be a \emph{defect}.
At low temperature the dynamics of the defects are effectively constrained (see Fig. \ref{FigPairMove}). 
Simulations and heuristic analysis based on this observation suggest that for the SPM the relaxation time scales like $e^{c \beta}$ (Arrhenius scaling), and that the dynamics are closely related to those of the Fredrickson-Andersen KCM. The mixing properties of this model have been well studied (see \cite{Pillai2017,Pillai2017a,Blondel2012a} and references therein).
On the other hand, in a related model called the triangular plaquette model the relaxation time is expected to scale like $e^{c \beta^2}$ (super-Arrhenius scaling) \cite{Garrahan2002b}. 
The defect dynamics of the triangular plaquette model are closely related to a particularly KCM known as the East model which has been widely studied (see \cite{Chleboun2015,Faggionato2012,Ganguly2015} and references therein).
As has been observed for KCM the difference between Arrhenius and super-Arrhenius scaling is fundamentally due to the nature of the energy barriers that should be overcome to bring isolated defects together and annihilate them \cite{Martinelli2019,martinelli2019a}.
Although a basic heuristic in terms of energy barriers seems to be good at distinguishing between Arrhenius and super-Arrhenius behaviour, we show in this work that they do not accurately predict the constant $c$, and we are able to confirm the refined heuristic suggested in \cite{ChlebounSpm2018}.

We are primarily interested in the dynamics of the SPM in boxes $\Lambda = \{1,2,\ldots,L\}^{2}$ with periodic (toroidal) boundary conditions and side length $L$ given by what physicists call the \textit{critical length scale} -  the correlation length for the product of spin variables in the infinite volume Gibbs measure. In \cite{Chleboun2017}, this critical length scale was shown to satisfy $L \approx e^{\frac{\beta}{2}}$ as $\beta \rightarrow 0$. 
We recall that the cutoff phenomenon, coined by Aldous and Diaconis (see for example \cite{Diaconis1996} and references therein), refers to a sharp transition in the convergence to stationarity, where the distance to equilibrium drops from close to $1$ to close to $0$ over a timescale that is much shorter than the mixing time.
Our main result is that these dynamics exhibit a sharp cutoff at time $T = C \beta e^{4 \beta} (1 + o(1))$ (see Theorem \ref{ThmMainResPer}). We also substantially sharpen the bounds on the mixing of the SPM for all-plus boundary conditions obtained in \cite{ChlebounSpm2018}, showing that it is $O(\beta^{7} e^{3.75 \beta})$ (see Theorem \ref{ThmMainResPlus}). Among other consequences, this implies that the mixing has a strong dependence on the boundary conditions. 
A similar phenomenon had been previously observed for certain kinetically constrained models \cite{Chleboun2014}.

It turns out that the two different time scales, as well as the cutoff phenomenon, are caused by the structure of the ground states. In particular, with all plus boundary conditions there is a unique ground state (all sites have spin $+1$), and the low temperature dynamics are dominated by the time to reach the ground state, which we conjecture to be $e^{3.5 \beta (1 + o(1))}$.
On the other hand, with periodic boundary conditions there are $2^{2L-1}$ ground states, where $L$ is the side length of the box. These ground states correspond to flipping all the spins in any set of rows and columns with respect to the all plus state. In this case, the dynamics are dominated by an induced random walk on the ground state. This walk is very similar to the standard random walk on the hypercube, which is well-known to exhibit cutoff, and we are able to show that the induced random walk inherits cutoff at $C \beta e^{4 \beta} (1 + o(1))$.

Following our work in \cite{ChlebounSpm2018}, the main tools will be detailed canonical path bounds using multi-commodity flows \cite{Sinclair1992}, combined with the spectral profile method introduced in \cite{Goel2006}. 
Roughly speaking, the canonical paths in \cite{ChlebounSpm2018} were formed by concatenating two types of path, an initial segment that went over small energy barriers, and a final segment that was very short. 
In the present paper, we interpolate between these regimes by defining paths that go over small (but not minimal) energy barriers and have small (but not minimal) length. 
These interpolated bounds allow us to substantially sharpen our bound on the time required to enter a ground state for the first time. In the case of periodic boundary conditions, our analysis of the walk from a first ground state reached involves a sharp comparison of the trace of the process on the ground states to the simple random walk on the hypercube; from there we can use the result that mixing times are related to the hitting times of large sets \cite{Oliveira2012,Peres2015}.

\subsection{Guide to Paper}

We set basic notation for the paper in Section \ref{SecNotation}. This section also includes the main heuristics guiding our proof; we suggest that readers fully digest these heuristics before reading the precise arguments.  Section \ref{SecMainResults} includes a precise statement of our main results. Section \ref{SecConCanPath} is the bulk of the paper. It describes the canonical path method, constructs new canonical paths that will be used to analyze the SPM, and gives detailed bounds on the properties of these paths. Finally, Sections \ref{SecAllPlusRes} and \ref{SecPerBoundRes} contain the proofs of our main results: an improved bound on the mixing time for the all-plus boundary condition and a cutoff theorem for periodic boundary conditions respectively.

\section{Notation and Background} \label{SecNotation}

\subsection{Basic Conventions}

We denote the two canonical basis vectors of $\bbZ^2$ by $e_1 = (1,0)$ and $e_2= (0,1)$. We denote the projections of $x \in \bbZ^2$ on $e_1$ and $e_2$ by $x_1$ and $x_2$ respectively.
 We define the shorthand $[a\!:\!b] = \{a,a+1,\ldots,b\}$ when $b-a \in \mathbb{N}$.

Given $\L \subseteq \bbZ^2$ we will denote by $\config = \{ -1, 1\}^{\L}$ the state space of the plaquette model, endowed with the product topology.
We let $\O=\O_{\bbZ^2}$.
  Given  $A \subset \L \subseteq \bbZ^2$  with $A$ finite and a configuration $\s \in \O_\L$ we define  $\res{A}{\s}$ as the restriction of $\s$ to $A$.
We define the  \emph{plaquette} at site $x \in \bbZ^2$ to be the set of four sites $B_x = \{x,x+e_1,x+e_2,x+e_1+e_2\}$.
We also write $\overline{B}_x= B_{x-e_1-e_2} = \{x,x-e_1,x-e_2,x-e_1-e_2\}$.

For functions $f, g \, : \, \mathbb{R}^{+} \mapsto \mathbb{R}^{+}$ we write $f = O(g)$ if there exists $0 < C,X < \infty$ so that $f(x) \leq C \, g(x)$ for all $x > X$. We also write $f = o(g)$ if $\lim_{x \rightarrow \infty} \frac{f(x)}{g(x)} = 0$, and we write $f = \Omega(g)$ if $g = O(f)$. Finally, we write $f = \Theta(g)$ if both $f = O(g)$ and $g = O(f)$. To save space, we also write $f \lesssim g$ for $f = O(g)$, we write $f \gtrsim g$ for $f = \Omega(g)$, and we write $f \approx g$ for $f = \Theta(g)$. Similarly, to save space, all inequalities should be understood to hold only for all $\beta > \beta_{0}$ sufficiently large. For example, we may write $e^{\beta} \geq \beta + 3$ without additional comment. Since all of our results are asymptotic as $\beta$ goes to infinity, this convention will not cause any difficulties. For any function $f:A\to B$, we denote  the image of $f$ by $f(A)$.

For a transition rate matrix $K$ on state space $\Omega$, denote by $\edges = \edges_{K} = \{(x,y) \in \Omega^{2} \, : \, K(x,y) > 0\}$ the collection of transitions with non-zero rates under $K$. This collection of edges defines a graph. when the rate matrix $K$ is clear from the context, we sometimes write $\edges$ for $\edges_{K}$ and discuss subsets of $\Omega$ using graph-theoretic definitions such as a \textit{connected component} in $\Omega$.

Finally, we define two orders on $\mathbb{Z}^{2}$. The first, which we will refer to as the \textit{lexicographic order}\footnote{There are several different ``lexicographic" orders in the literature. The order in this paper corresponds to the order in which words are read in English if the Cartesian plane is drawn in the usual way.}, is as follows: for $x \neq y \in \mathbb{Z}^{2}$, we say $x <_{\mathrm{lex}} y$ if and only if one of the two following  conditions hold:
\begin{enumerate}
	\label{eq:lexi}
	\item $x_{2} > y_{2}$, or
	\item $x_{2} = y_{2}$ and $x_{1} < y_{1}$.
\end{enumerate}
Similarly, we define the \textit{anti-lexicographic order} by saying that $x <_{\mathrm{xel}} y $ if and only if one of the two following  conditions hold:
\begin{enumerate}
	\item $x_{2} > y_{2}$, or
	\item $x_{2} = y_{2}$ and $x_{1} > y_{1}$.
\end{enumerate}
By a small abuse of notation, we say that a set $S_{1}$ is less than a set $S_{2}$ in lexicographic order if \textit{every} element of $S_{1}$ is less than \textit{every} element of $S_{2}$.

\subsection{Equilibrium Gibbs measures}
We will define the finite volume Gibbs measures on $\L  \subset \bbZ^2$ with  fixed and periodic boundary conditions.
Let $\cB(\L) = \{x\in \bbZ^2 \,:\, B_x  \cap \L \neq \emptyset\}$ be the set of plaquettes which intersect $\Lambda$, indexed by their bottom left vertex, and
let $\cB_{-}(\L) = \{x \in \bbZ^2 \, : \, \overline{B}_{x} \subset \L\}$ (if $\L$ is a rectangle rectangle then $\cB_{-}(\L)$ is just $\L$ without the left most column and bottom row).
For a boundary condition $\t \in \O$, wedefine $\O_\L^{\t} = \{\s\in \O \,:\, \res{\L^c}{\s} \equiv \res{\L^c}{\t}\}$. Finally, we denote the external boundary of $\L$ by  $\partial(\L) = \cup_{x\in\cB(\L)}B_x\setminus  \L$.

For fixed boundary conditions $\t$, the plaquette variables associated with a spin configuration are defined by the map $p^{\t}:\O_\L^{\t}\to \O_{\cB(\L)}$ which is given by the formula
\begin{align}
  \label{EqDefectMap}
  p^{\t}_x(\s) = \prod_{y \in B_x}\s_y= \s_{(x_1,x_2)}\s_{(x_1+1,x_2)} \s_{(x_1,x_2+1)}\s_{(x_1+1,x_2+1)}\,, \quad \textrm{for } x \in \cB(\L)\,.
\end{align}
Similarly,  for periodic boundary conditions on a box $\L=[0\!:\!L_1-1]\times [0\!:\!L_2-1]$, define $p^{\rm per}:\O_\L \to \O_{\L}$ by
\begin{align}
  p^{\rm per}_x(\s) = \prod_{y \in B_x}\s_y= \s_{(x_1,x_2)}\s_{(x_1+1,x_2)} \s_{(x_1,x_2+1)}\s_{(x_1+1,x_2+1)}\,, \quad \textrm{for } x \in \L\,,
\end{align}
where the sums $x_1{+}1$ and $x_2{+}1$ above are taken modulo $L_1$ and $L_2$ respectively.
We say there is a \emph{defect} in $\s$ at $x \in \cB({\L})$ if $p^{\t}_x(\s) = -1$ (similarly for periodic boundary conditions). By a small abuse of notation, we consider ``per" to be a boundary condition.

For $\s \in \O_\L$ or $\O_\L^\t$ let $|\s| = |\{x \in \L \,:\, \s_x = -1\}|$ denote the number of minus spins and $|p^{\t}(\s)| =  |\{x \in \cB(\L)\,:\, p^{\t}_x(\s) = -1\}|$ the number of defects (similarly for $|p^{\rm per}(\s)|$).
We define a partial order on plaquette variables with respect to defects by
\begin{align}
  p^\t(\s) \leq p^\t(\h) \iff \{x \in \cB(\L)\,:\, p^{\t}_x(\s) = -1\} \subset \{x \in \cB(\L)\,:\, p^{\t}_x(\h) = -1\}\,,
\end{align}
similarly for periodic boundary conditions.

We define the Hamiltonian $H_\L^{\t}: \O_{\L}^{\t}\to \bbR$ with boundary condition $\t$ by
\begin{align}
\label{eq:hamtau}
  H^{\t}_\L(\s) = -\frac{1}{2}\sum_{x\in \cB(\L)}p^{\t}_x(\s)\,,
\end{align}
and similarly for periodic boundary condition. The finite volume Gibbs measure on $\L$ with boundary condition $\t$ is then denoted by $\p_\L^{\t}$ and given by
\begin{align}
 \label{eq:pitau}
  \pi_{\L}^{\t}(\s) = \frac{e^{-\b H^{\t}_\L(\s)}}{Z^{\t}_\L(\b)}\,,
\end{align}
where $Z^{\t}_\L(\b) =\sum_{\s\in\O_\L^{\t}} e^{-\b H^{\t}_\L(\s)}$ is the partition function. The analogous formula gives the finite volume Gibbs measure $\pi_{\L}^{\rm per}$ for periodic boundary conditions. For brevity, if $\t \equiv \pm 1$ we will replace $\t$ with $\pm$, for example with plus boundary conditions we will write $H_\L^+$, $\pi^+_\L$.
Also, where there is no confusion, we denote by $+\in \O$ the configuration of all $+1$ spins.
When the boundary conditions and lattice are clear from the context, we may drop the boundary condition superscript and the lattice subscript.

It turns out that the size of $(p^{\t})^{-1}(p)$ is independent of $p \in p^\t(\O^\t_\L)$ for for any boundary condition $\t \in \O$ or $\t = \textrm{per}$ (see Section 4 of \cite{ChlebounSpm2018}), in particular 
\begin{align} \label{eq:defectprob}
\pi^{\t}_\L(\{p^{\t}(\s) = p\})\propto e^{-\beta |p|}\1_{p^\tau(\O_\L^\t)}(p),\     \textrm{ for } p \in \O_{\cB(\L)}\,.
\end{align}
Since $|p^{+}(+)| = |p^{\rm{per}}(+)| = 0$, we also have 
\begin{align}
  \label{eq:simple}
  \pi_\L^{\t}(\s) =  \pi_\L^{\t}(+) e^{-\b |p^{\t}(\s)|}\,,\quad \textrm{ for } \s \in \O_{\L}^{\t},\quad \textrm{ and } \tau \in \{+,\rm{per}\}\,.
\end{align}
For boundary conditions $\t \in \{+,\textrm{per}\}$ we note that the plaquette configurations always satisfy a parity constraint: the number of defects in any row or column of $p^\t(\s)$ is even for each $\s \in \O_\L^\t$ (see Section 4 of \cite{ChlebounSpm2018}).

For a given boundary condition $\tau$, define the collection of \textit{ground states} by:
\begin{align}
\cG = \cG^{\t} = \{ \sigma \in \O_\L^{\t} \, : \, H_\L^{\t}(\sigma) = \min_{\eta \in \O_\L^\t} H_\L^{\t}(\eta) \}.
\end{align}
Finally we will frequently use the fact that ground states dominate the stationary distribution.
\begin{lemma}[Domination of Ground States]
\label{lem:dom}
  Let $\t \in \{+,\rm{per}\}$ and $L=L_c$, then
  \begin{align}
    \pi(\cG) \geq 1 - O(e^{-2\b})\,.
  \end{align}
\end{lemma}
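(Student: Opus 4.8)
The plan is to bound $\pi(\cG^c)$ by comparing the Gibbs weight of the ground state manifold to that of all other configurations, using the crucial structural fact \eqref{eq:simple}, which says that $\pi_\L^\t(\s) = \pi_\L^\t(+)\, e^{-\b|p^\t(\s)|}$ for $\t \in \{+,\mathrm{per}\}$, together with the count of configurations at each defect level supplied by \eqref{eq:defectprob}. First I would recall that for these two boundary conditions the all-plus configuration $+$ is a ground state, so $H_\L^\t$ is minimized precisely when $p^\t(\s) \equiv +1$, i.e. $\cG = \{\s : |p^\t(\s)| = 0\}$. Hence $\pi(\cG) = \pi(\{p^\t(\s) = +1\})$, and by \eqref{eq:defectprob},
\begin{align}
  \pi(\cG^c) = \frac{\sum_{p \neq +1} e^{-\b|p|}\,\1_{p^\t(\O_\L^\t)}(p)}{\sum_{p} e^{-\b|p|}\,\1_{p^\t(\O_\L^\t)}(p)} \leq \sum_{k \geq 1} N_k\, e^{-\b k},
\end{align}
where $N_k$ is the number of admissible plaquette configurations with exactly $k$ defects (the denominator is at least $1$, coming from $p \equiv +1$).

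The key input is then an upper bound on $N_k$. Here the parity constraint recalled just before the lemma — every row and every column of $p^\t(\s)$ contains an even number of defects — is essential: it forces $N_1 = 0$, so the sum starts at $k=2$. More generally, a crude bound $N_k \leq \binom{|\cB(\L)|}{k} \leq (L^2)^k = L^{2k}$ suffices if combined with $L = L_c \approx e^{\b/2}$, giving $L^{2k} e^{-\b k} = (L^2 e^{-\b})^k = \Theta(1)^k$ — which is \emph{not} good enough on its own. So I would instead use parity more carefully: the smallest nonzero admissible defect pattern is a "rectangle" of four defects (or on the torus, a full row/column pair), so $N_2 = 0$ as well, and the leading contribution is $k = 4$. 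Counting configurations of $k=4$ defects forming an axis-aligned rectangle gives $N_4 = \Theta(L^4)$ (choose two rows and two columns), contributing $\Theta(L^4 e^{-4\b}) = \Theta(e^{2\b} e^{-4\b}) = \Theta(e^{-2\b})$, which is exactly the claimed order. For $k \geq 5$ one needs to check that $\sum_{k \geq 5} N_k e^{-\b k} = O(e^{-2\b})$ as well; a safe route is the bound $N_k \leq L^{2k}$ for the finitely many $k$ with $2k < $ (something), but asymptotically the right tool is: any admissible pattern with $k$ defects is contained in a union of the rows and columns it meets, and a short combinatorial/entropy argument shows $N_k e^{-\b k}$ decays geometrically past $k=4$ once $L \approx e^{\b/2}$, since adding defects beyond a single rectangle costs at least $e^{-\b}$ per defect while the positional entropy per defect is only $L^2 = e^{\b}$, and the parity constraint removes the borderline cases. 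I would organize this as: (i) $N_k = 0$ for $k \in \{1,2,3\}$; (ii) $N_4 \lesssim L^4$; (iii) $\sum_{k\geq 5} N_k e^{-\b k} \lesssim e^{-2\b}$ via $N_k \lesssim C^k L^{2\lceil k/2\rceil}$ or a similar rectangle-decomposition bound.

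The main obstacle is step (iii): controlling the tail $\sum_{k \geq 5} N_k e^{-\b k}$ uniformly. The naive bound $N_k \leq \binom{L^2}{k}$ makes each term $(L^2 e^{-\b})^k / k! = \Theta(1)^k/k!$, whose sum is $O(1)$ but not $O(e^{-2\b})$; one must exploit that admissible patterns are far sparser than arbitrary $k$-subsets because of parity. The cleanest fix is probably to note that the denominator in \eqref{eq:defectprob} is $\geq 1 + N_4 e^{-4\b}$ and handle the numerator's tail by the fact that the generating function $\sum_k N_k e^{-\b k}$ equals (up to the partition-function normalization already accounted for) a ratio that was effectively computed in \cite{Chleboun2017,ChlebounSpm2018} when identifying $L_c$ — indeed $L_c$ is *defined* so that this correlation-length quantity is $\Theta(1)$, so the estimate $\pi(\cG^c) = O(e^{-2\b})$ should follow from, or closely parallel, the computations establishing $L_c \approx e^{\b/2}$ there. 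I would cite Section 4 of \cite{ChlebounSpm2018} for the exact count of $|(p^\t)^{-1}(p)|$ and the parity structure, and for the torus do the analogous counting with "rows/columns of defects" replacing "rectangles," which only changes constants.
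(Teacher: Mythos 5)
Your overall strategy is the natural one, and it is worth noting that the paper's own ``proof'' of this lemma is a one-line citation to Lemma 4.4 of \cite{ChlebounSpm2018}, so your fallback of deferring to that reference is literally what the authors do. Your steps (i) and (ii) are correct: the parity constraint kills all odd $k$ and $k=2$ (any configuration with $k\le 3$ defects has a row or column containing an odd number of them), the $k=4$ admissible configurations are exactly the $\Theta(L^4)$ rectangles, and $L^4e^{-4\b}=\Theta(e^{-2\b})$ at $L=L_c$ is indeed the claimed order. The reduction to $\pi(\cG^c)\le\sum_{k\ge1}N_ke^{-\b k}$ via \eqref{eq:simple}--\eqref{eq:defectprob} is also fine.

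The genuine gap is your step (iii), which you correctly identify as the main obstacle but do not close. The bound $N_k\lesssim C^kL^{2\lceil k/2\rceil}$ is asserted, not proved, and the obvious route to it fails: choosing the at most $k/2$ occupied rows and at most $k/2$ occupied columns costs at most $L^{k}$, but the number of placements of $k$ defects inside the resulting subgrid is of order $\binom{(k/2)^2}{k}\approx(ek/4)^{k}$, which ruins the estimate once $k\gtrsim L$; likewise the ``entropy $e^{\b}$ per defect versus cost $e^{-\b}$ per defect'' heuristic gives exactly the borderline $\Theta(1)^{k}$ behaviour you already rejected. The fix is to exploit parity at the level of generating functions rather than raw counts. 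Write $x=e^{-\b}$ and group the nonempty admissible configurations by their exact sets of occupied rows and columns, of sizes $r\ge2$ and $c\ge2$. For a fixed such choice, every occupied row is a nonempty even subset of the $c$ occupied columns, so (dropping the column constraints) the total weight of these configurations is at most $\bigl(\tfrac{(1+x)^{c}+(1-x)^{c}}{2}-1\bigr)^{r}\le(c^{2}x^{2})^{r}$ since $cx\le Lx\le1$; by row--column symmetry it is also at most $(r^{2}x^{2})^{c}$, hence at most the geometric mean $c^{r}r^{c}x^{r+c}$. Using $\binom{L}{r}\binom{L}{c}c^{r}r^{c}\le(eL)^{r+c}$ and summing $(eLx)^{r+c}$ over $r,c\ge2$ gives $O((Lx)^{4})=O(e^{-2\b})$ for all of $\pi(\cG^c)$, with the $k\ge6$ tail (i.e.\ $r+c\ge5$) contributing only $O((Lx)^{5})$. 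With this, or any equivalent tail estimate, supplied in place of the unproved counting bound, your argument is complete.
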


\begin{proof}
This is Lemma 4.4 of \cite{ChlebounSpm2018}.
\end{proof}

\subsection{Finite volume Glauber dynamics}

For a set $S \subset \L$ and $\s \in \config$, denote by $\s^{S}$ the configuration obtained by flipping all the spins of $\s$ that lie in $S$. With slight abuse of notation, we define $\s^{x} = \s^{\{x\}}$ for $x \in \L$.

Given a finite region $\L$ and boundary condition $\tau$, we consider the continuous time Markov process determined by the generator
\begin{align}
\label{eq:gen}
  \cL_{\L}^{\t} f(\sigma)=\sum_{x\in\L}c^{\tau}_{\L}(x,\sigma)(f(\s^x)-f(\s)) = \sum_{x\in\L}c^{\tau}_{\L}(x,\sigma)\nabla_xf(\s)\,,
\end{align}
where we define $\nabla_xf(\s) = (f(\s^x)-f(\s))$, and where the Metropolis spin-flip rates $c^{\tau}_\L(x,\sigma)$ are given by the formula
\begin{equation}
\label{eq:rates}
c^{\tau}_\L(x,\sigma)=
\begin{cases}
e^{-\beta(H_{\L}^{\t}(\sigma^x)-H_{\L}^{\t}(\sigma))} & \text{if } H_{\L}^{\t}(\sigma^x)>H^{\t}_{\L}(\sigma)\,,\\
1 & \text{otherwise.}
\end{cases}
\end{equation}
With a slight abuse of notation, we denote  the elements of the associated transition rate matrix by by $\cL_{\L}^{\t}(\s,\h)$, for $\s,\h \in \O_\L^\t$.
The process is  reversible with respect to the finite volume equilibrium measure $\pi_{\L}^{\t}$.
\begin{remark}
All our results hold equally well for the standard heat-bath dynamics, since $\left(1+e^{\beta(H_{\L}^{\t}(\sigma^x)-H_{\L}^{\t}(\sigma))} \right)^{-1} \approx \min\{e^{-\beta(H_{\L}^{\t}(\sigma^x)-H_{\L}^{\t}(\sigma))},1\}$ for large $\beta$.
\end{remark}

Since $H^\t_\L(\s)$ only depends on the plaquette variables, the spin dynamics also induce a dynamics on these ``defect" variables which is Markov. The generator of the defect dynamics is given by
\begin{align}
  \label{eq:genplaq}
  \mathcal{Q}_\L^\t f(p) = \sum_{x\in \L} k^\t_\L(x,p)\left(f(p^{\overline{B}_{x}})- f(p)\right),
\end{align}
where we recall
\begin{align*}
  p^{\overline{B}_{x}}_z =
  \begin{cases}
    -p_z & \textrm{if } z \in \overline{B}_{x} = \{x-(1,1),x-(0,1),x-(1,0),x \}\,,\\
    p_z & \textrm{otherwise.}
  \end{cases}
\end{align*}
From \eqref{eq:rates}, the transition rates for this process are given by
\begin{align}
  \label{eq:ratesp}
  k^\t_\L(x,p) = \min\left\{ {\rm exp}\left[-\beta\left(|p^{\overline{B}_{x}}|-|p| \right)\right], 1\right\}\,.
\end{align}

\subsection{Spectral gap}
The Dirichlet form associated with $\cL_{\L}^{\t}$ is denoted by $\cD_\L^\t(f) = -\pi_{\L}^\t(f\cL_{\L}^{\t} f)$, and it satisfies the formula
\begin{align}
  \label{def:dir}
  \cD_\L^\t(f) = \frac{1}{2}\sum_{\h\in\O}\sum_{x\in \L}\pi_{\L}^\t(\h)c_\L^\tau(x,\h)\left(\nabla_x f(\h)\right)^2\,.
\end{align}
Define $\var_\L^\t(f)$ to be the variance of $f$ with respect to $\pi_\L^\t$.

\begin{defn}[Relaxation time]
The smallest positive eigenvalue of $-\cL^\t_\L$
is called the spectral gap and it is denoted by
$\gap(\cL_{\L}^\t)$. It satisfies the Rayleigh-Ritz variational principle
\begin{align}
\label{eq:gap}
\gap ( \cL^\t_\L):= \inftwo{f \,:\,\O_\L \mapsto \bbR}{f \text{ non constant} }  \frac{ \cD_\L^\t(f) }{\var_\L^\t(f) }\,.
\end{align}
The relaxation time $T^\t_{\rm rel} (\L)$ is defined as the inverse of the spectral gap:
\begin{equation}\label{rilasso}
T_{\rm rel}^\t  (\L)= \frac{1}{ \gap (\cL^\t_\L)}\,.
\end{equation}
If $\L = [1:L]^2$ we simply write $T_{\rm rel}^\t  (L)$.
\end{defn}

\begin{defn}[Mixing time]
For fixed $0 < \epsilon < 1$, define
\be
T_{\rm mix}^{\t}(\L, \epsilon) = \inf \{s > 0 \, : \, \max_{\sigma \in \config} \| e^{s \,\cL^{\t}_{\L}}(\sigma,\cdot) - \pi_{\L}^\t(\cdot) \|_{\rm TV} < \epsilon \};
\ee
call $T_{\rm mix}^{\t}(\L) \equiv T_{\rm mix}^{\t}(\L, 0.25)$ the \textit{mixing time} of $\cL^\t_\L$. If $\L = [1,L]^{2}$ we simply write $T_{\rm mix}^\t  (L)$.

\end{defn}

\begin{defn} [Cutoff Phenomenon] \label{DefCutoff}

Consider  sequences $\beta = \{ \beta(n) \}_{n \in \mathbb{N}}$ of temperatures, $\L = \{ \L (n) \}_{n \in \mathbb{N}}$ of lattices, and $\t = \{ \t (n) \}_{ n \in \mathbb{N}}$ of boundary conditions. We say this sequence exhibits \textit{cutoff} if, for all $0 < \epsilon < 0.5$,

\be
\lim_{n \rightarrow \infty} \frac{T_{\rm mix}^{\t (n)}(\L (n), \epsilon)}{T_{\rm mix}^{\t (n)}(\L (n), 1- \epsilon)} = 1.
\ee
\end{defn}

We define the critical scale as the correlation length for the product of spin variables in the infinite volume Gibbs measure; see \cite{Chleboun2017} for further details and other important length scales.

\begin{defn}[The critical scale]
We define the critical length scale by $L_c = \lfloor e^{\frac{\b}{2}} \rfloor$.
\end{defn}

\subsection{Heuristics for Mixing and Cutoff } \label{SecHeuristic}

Recall that the \textit{ground states} are the configurations $\sigma$ that minimize $H_\L^{\t}$. For the plus and periodic boundary conditions, these are exactly the configurations with no defects. For plus boundary conditions there is a unique configuration with no defects, and for periodic boundary conditions there are $2^{2L-1}$ such configurations. 
In the case of periodic boundary conditions, we can define a bijection $w \, : \, \cG \mapsto \{-1,+1\}^{2L-1}$ from ground states to the $(2L-1)$-dimensional hypercube according to the following formula for its inverse:
\be
w^{-1}(v) [i,j] &= v[i] v[L+j], \qquad i \in [1:L], \, j \in [1:(L-1)], \\
w^{-1}(v)[i,L] &= v[i], \qquad \qquad \quad \,  i \in [1:L].
\ee

Furthermore, under this map, the minimal-length paths between ground states (which are also minimal-energy paths) all travel between configurations $\sigma, \eta$ that satisfy $|w(\sigma) - w(\eta)| = 1$ or $w(\s) = -w(\h)$. These minimal-length paths all look quite similar (see Figure \ref{FigDominantTransitionsBetweenGroundStates} for a typical example and Definition \ref{DefMinEnMinLPaths} for a complete enumeration of all minimal-length paths). If we denote by $\{ \hat{X}_{t}\}_{t \geq 0}$ the \textit{trace} of the SPM process on the collection $\cG$ of ground states, then the transition rate kernel $Q_{\mathcal{G}}^{\rm per} \circ w^{-1}$ of the Markov process $\{ w(\hat{X}_{t}) \}_{ t \geq 0}$ turns out to be quite close to the transition rate matrix $Q_{\rm H}$ of a simple group walk on the hypercube $\{-1,+1\}^{2L-1}$ with generating set $\{(-1,1,1,\ldots,1),(1,-1,1,\ldots,1),\ldots,(1,1,1,\ldots,-1),(-1,-1,-1,\ldots,-1)\}$. Furthermore, we can get an \textit{extremely} good approximation by considering low-degree polynomials in $Q_{\rm H}$: as shown in Corollary \ref{LemmaHyperComp}, there exists a sequence $c= c(\beta) = O(L^{-1})$ such that $P_{\rm H} = Q_{\rm H} + c Q_{\rm H}^2$ satisfies 
\be
\sup_{u \in \{-1,1\}^{2L-1}} \| Q_{\mathcal{G}}^{\rm per}(w^{-1}(u),\cdot)  -P_{\rm H}(u,\cdot) \|_{\mathrm{TV}} = O(L^{-9})\,.
\ee
Given this approximation, it is easy to check that the trace chain inherits cutoff from $Q_{\rm H}$.

To see that the \textit{full} Markov chain with periodic boundary conditions inherits cutoff from the \textit{trace} chain, it is enough to check:

\begin{enumerate}
\item The distribution of the transition time between ground states does not have heavy tails, and
\item Regardless of the initial configuration, the chain enters a ground state within time $o(e^{4 \beta})$ with high probability.
\end{enumerate}

The first is straightforward. The second is suggested by the following \textit{random walk heuristic}, which also appears in Section 2.5 of our previous paper \cite{ChlebounSpm2018}. Call a configuration that is not a ground state \textit{metastable} if no plaquette has more than one defect, and \textit{unstable} otherwise. As the unstable states are short-lived, we concentrate on the transitions between ``nearby'' metastable states.

Due to parity constraints on the plaquette configurations, the lowest-energy metastable configurations have exactly four defects, placed at the vertices of a rectangle. Starting the SPM process in such a ``rectangular'' configuration, with height and width of the associated rectangle at least 4, it is overwhelmingly likely that the next metastable configuration will be another ``rectangular'' configuration, with either height or width changed by exactly 1. The typical intermediate dynamics between such metastable states are shown in Figure \ref{FigPairMove}.

\begin{figure}[htb]
\includegraphics[width=0.95\linewidth]{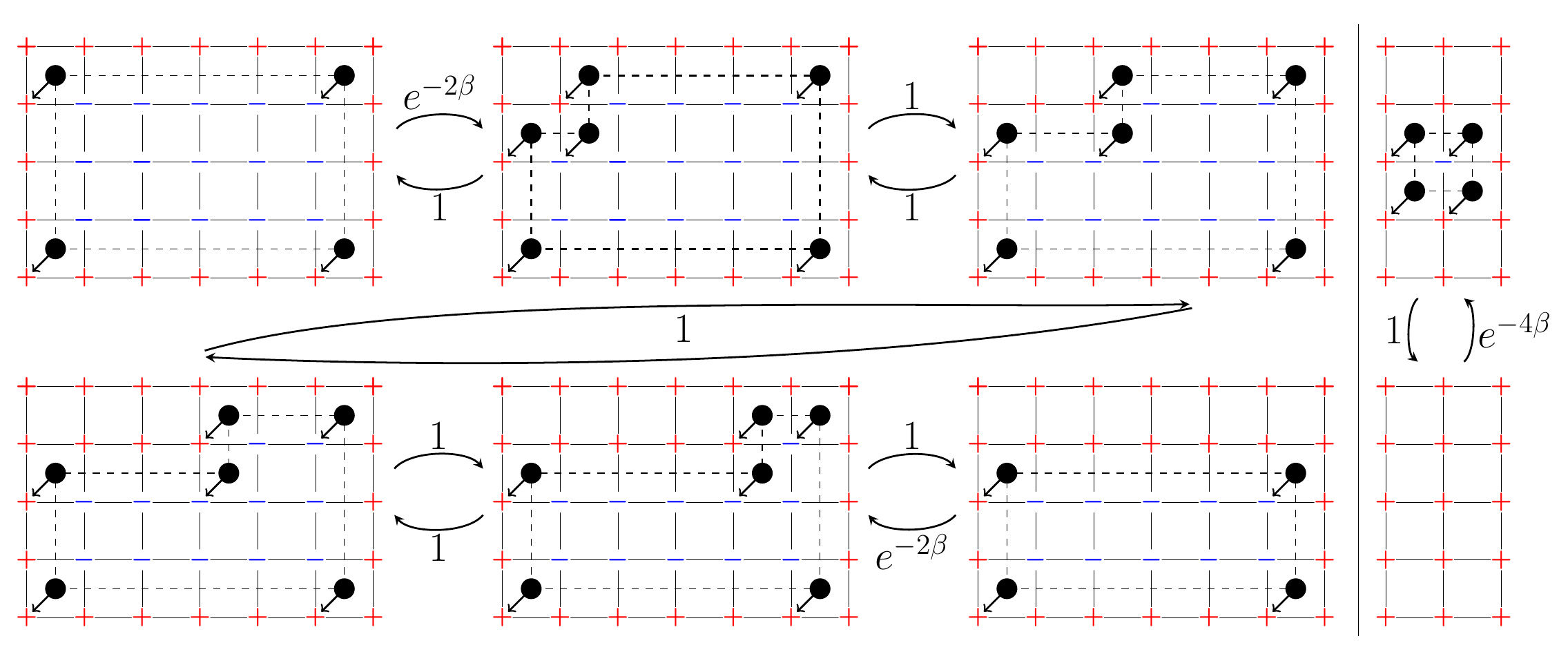}
\caption{\label{FigPairMove} The spin configuration is represented by $+$ (red) and $-$ (blue) on the lattice. The black circles represent defects ($-1$ plaquette variables), which are associated with the vertex at the bottom left of the corresponding plaquette. Dashed lines separate regions of $+$ and $-$ spins. The arrows indicate the associated transition rates. Left: An isolated defect creates two defects at rate $e^{-2\beta}$; subsequently a pair of defects is emitted and moves along an edge of the rectangle according to a simple random walk, and is then annihilated upon colliding with another defect (possibly the defect that emitted the pair). Right: the only type of transition not included on the left is to  add or remove four neighbouring defects. }
\end{figure}

Note that the ``rectangular'' configurations are entirely determined by the upper-left and lower-right vertices. Following the heuristic of Figure \ref{FigPairMove}, these corner defects perform nearly-independent simple random walks as shown in Figure \ref{FigCornerWalk}.

\begin{figure}[h]
\includegraphics[width=0.85\linewidth]{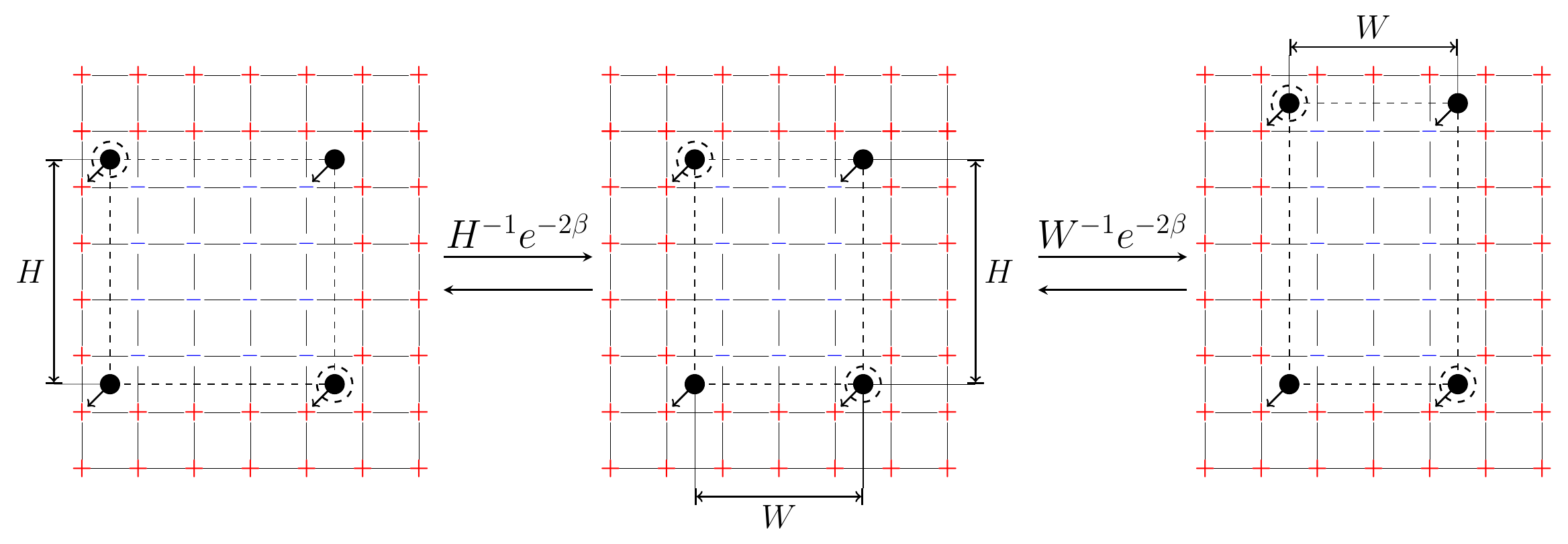}
\caption{\label{FigCornerWalk} The defects at the corners of a box approximately perform random walks. They move left-right at rate $H^{-1} e^{-2 \beta}$ and up-down at rate $W^{-1} e^{-2 \beta}$, where $H$ is the height of the box and $W$ is its width.}
\end{figure}

A routine SRW calculation says that this rectangle will collapse to one of the ground states in a time which is $\Theta \left( e^{3.5 \beta} \right)$.The canonical path method (see Section \ref{SecCanonicalPathAbs}) can be used to extend this heuristic to configurations with many defects. If we restrict our attention to configurations whose defects lie at the vertices of non-overlapping rectangles, the following canonical path construction would again give an $O(e^{3.5 \beta})$ bound on the relaxation time of the full process:

\begin{enumerate}
\item Pick a rectangle uniformly at random.
\item Follow a minimal-length path that collapses this rectangle without ever adding more than 2 defects to the initial configuration, as given by the heuristic in Figures \ref{FigPairMove} and \ref{FigCornerWalk}.
\item If any defects remain, go back to step \textbf{(1)}.
\end{enumerate}

In other words, if the rectangles don't overlap, we can essentially treat them as evolving independently. 
Unfortunately, this simple path gives very poor estimates when the rectangles do overlap. Much of this paper, like much of the precursor paper \cite{ChlebounSpm2018}, is concerned with using a canonical path and spectral profile argument to  salvage this heuristic. In \cite{ChlebounSpm2018}, we did this with a coarse ``two-stage" path construction:
\begin{enumerate}
\item In the initial stage, we successively find a ``good" collection of  rectangles with at least $3$ defects at their vertices, then choose one rectangle from this collection at random and collapse it. This construction gives a long path with low energy and congestion.
\item After collapsing $\Theta(L)$ rectangles in this way, we simply flip all the spins to $+$ in lexicographic order. This construction gives a very short path with extremely high energy and congestion.
\end{enumerate}

In that paper, most of the work was in finding the right definition for our ``good" rectangles. In the present paper, we add an intermediate collection of paths in which we collapse random ``L-shaped" configurations of defects with nearly-minimal area. These paths have lengths, energies and congestions that are intermediate between the other two and give a way to interpolate our bounds. Further details on these heuristics are in Section \ref{SubsecPathNew}.

\section{Main Results} \label{SecMainResults}

The following bound on the mixing time of the square plaquette model with all-plus boundary conditions refines Theorem 1 of \cite{ChlebounSpm2018}:

\begin{thm} [Mixing Times for Plus Boundary Conditions] \label{ThmMainResPlus}

The square plaquette process with all plus boundary conditions, on the critical scale,  satisfies

\be \label{IneqPlusMix}
e^{3.5 \beta} \lesssim T_{\rm mix}^+( L_c ) \lesssim \beta^{8} e^{3.75 \beta}.
\ee
\end{thm}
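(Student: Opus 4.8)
The plan is to convert the random-walk heuristic of Figures~\ref{FigPairMove}--\ref{FigCornerWalk} into a hitting-time estimate. Since $\pi(\{+\}) \geq 1 - O(e^{-2\beta})$ by Lemma~\ref{lem:dom}, any probability measure within total-variation distance $0.25$ of $\pi$ must assign mass at least $\tfrac12$ to the single configuration $+$, so $T_{\rm mix}^+(L_c)$ is bounded below by the median, from a worst-case start, of the hitting time $\tau_{\{+\}}$ of the ground state. I would start from the configuration $\sigma_0$ that equals $-1$ on a $\lfloor L_c/2\rfloor \times \lfloor L_c/2\rfloor$ sub-rectangle and $+1$ elsewhere --- four defects at the corners of a near-maximal rectangle --- and fix a site $x^*$ near its centre. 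For the chain to reach $+$, the defect interface enclosing the minus region must sweep past $x^*$; tracking a monotone functional of this interface (the side lengths of the minimal enclosing rectangle of the minus region, or simply the spin $\sigma_t(x^*)$) and using that each admissible interface move is a step of a simple random walk performed by a defect pair at rate $O(L_c^{-1} e^{-2\beta})$ (Figure~\ref{FigCornerWalk}), a comparison with a simple random walk that must traverse distance $\Theta(L_c)$ shows that $\P_{\sigma_0}(\sigma_t(x^*) = -1)$ stays bounded away from $0$ for $t = o(e^{3.5\beta})$. As $\pi(\sigma(x^*) = -1) = O(e^{-2\beta})$, this gives $T_{\rm mix}^+(L_c) \gtrsim e^{3.5\beta}$, essentially the lower bound argument of \cite{ChlebounSpm2018}.

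\textbf{Upper bound.} Again using $\pi(\{+\}) \geq 1 - O(e^{-2\beta})$, one checks that $T_{\rm mix}^+(L_c)$ is, up to universal constants, the worst-case expected hitting time $\max_{\sigma}\E_\sigma[\tau_{\{+\}}]$ of the ground state: the total-variation distance of $e^{t\cL}(+,\cdot)$ from $\pi$ never exceeds $1 - \pi(\{+\}) = O(e^{-2\beta})$, so once the chain has hit $+$ it stays essentially equilibrated, and Markov's inequality upgrades a bound on $\E_\sigma[\tau_{\{+\}}]$ to a mixing bound. I would bound $\max_\sigma\E_\sigma[\tau_{\{+\}}]$ through the Dirichlet form of the process absorbed at $+$, using the spectral-profile machinery of \cite{Goel2006}: it suffices to lower bound $\Lambda(A) := \inf\{\cD(f)/\pi(f^2) : \mathrm{supp}(f) \subseteq A\}$ for subsets $A$ of $\{+\}^c$, and then integrate against $\tfrac{dr}{r}$ over the $\pi$-measures of such sets. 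To lower bound $\Lambda(A)$ I would build a multi-commodity flow sending, for each $\sigma \in A$, a total of $\pi(\sigma)$ units from $\sigma$ to $+$ along the three-stage interpolated canonical paths of Section~\ref{SecConCanPath}: collapse a randomly chosen ``good'' rectangle of defects (long path, low energy barrier); collapse a randomly chosen near-minimal-area ``L-shaped'' cluster of defects (intermediate length and barrier); and finally flip the remaining minus spins to $+$ in lexicographic order (short path, high energy barrier). The estimates of Section~\ref{SecConCanPath} bound the maximal path length $\ell$ and, more importantly, control the Boltzmann weight and the incident flip-rates of every intermediate configuration relative to the flow's sources, yielding a congestion bound $\rho$ and hence $\Lambda(A) \gtrsim (\ell\,\rho)^{-1}$. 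Since $\Lambda(A)$ only improves when $A$ consists of configurations with more defects (more downhill moves available), the $\tfrac{dr}{r}$-integral is dominated by sets of measure of order $e^{-2\beta}$ and loses only an $O(\beta)$ factor beyond the worst value of $\ell\,\rho$.

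\textbf{Why $3.75$, and the main obstacle.} The exponent $3.75$ emerges from optimising the two crossover thresholds in the three-stage construction. Used on its own, the rectangle stage has path length of order $e^{\beta}$ per collapsed rectangle but only traverses moves of rate $\geq e^{-2\beta}$, which is the regime of \cite{ChlebounSpm2018}; the lexicographic stage has length $O(L_c^2)$ but passes through configurations of very small Boltzmann weight; the new L-shaped stage interpolates, and choosing how many rectangles and how many L-shapes to collapse before switching stages balances $(\text{path length}) \times e^{\beta \cdot (\text{effective barrier})}$ at exponent $3.75\beta$, with the entropy counts of the rectangle and L-shape defect patterns producing the polynomial prefactor $\beta^{8}$ (the slightly sharper $\beta^{7}$ of the abstract from a more careful version of the same count). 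The main obstacle is exactly the work of Section~\ref{SecConCanPath}: turning ``good'' rectangles and near-minimal-area L-shapes into a genuinely well-defined, recursively applicable decomposition of an \emph{arbitrary} multi-defect configuration, so that the paths really do have intermediate and controlled length and defect count and so that the congestion bookkeeping closes --- in particular, handling overlapping rectangles and L-shapes, the situation that defeats the naive independent-collapse heuristic of Section~\ref{SubsecPathNew}, and tuning the two thresholds so the exponent is exactly $3.75$ rather than something larger. The remaining ingredients (the hitting-time/mixing comparison, the spectral-profile integral, Lemma~\ref{lem:dom}) are standard or already in hand.
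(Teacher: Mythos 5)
Your proposal follows essentially the same route as the paper: the upper bound comes from the spectral-profile/multicommodity-flow bound of Theorem \ref{ThmMainSpectralProfileBound} applied to the three-stage interpolated paths of Section \ref{SecConCanPath} (with the exponent $3.75$ produced by the crossover at $\approx \beta L$ removed defects between the mid-path and naive-path regimes), and the lower bound is simply inherited from the random-walk/hitting-time argument of \cite{ChlebounSpm2018}. The only cosmetic difference is that you route the upper bound through $\max_{\sigma}\E_{\sigma}[\tau_{\{+\}}]$ before invoking the spectral profile, whereas the paper bounds $T_{\rm mix}^{+}$ directly by the integral \eqref{IneqMainSpectralProfileBound}; this changes nothing of substance.
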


We use this to prove our main result:

\begin{thm} [Cutoff for Periodic Boundary Conditions] \label{ThmMainResPer}

The square plaquette process with periodic boundary conditions, on the critical scale, exhibits the cutoff phenomenon (see Definition \ref{DefCutoff}). Furthermore, there exists $0 < C < \infty$ such that
\be
T_{\rm mix}^{\rm per}( L_c ) = C \beta e^{4 \beta} (1 + o(1)).
\ee

\end{thm}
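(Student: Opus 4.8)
The plan is to derive the cutoff for the full Glauber dynamics from a cutoff for the trace chain $\{\hat X_t\}_{t\ge 0}$ of the process on the ground states $\cG$, which in turn is obtained by comparison with the hypercube walk described in Section~\ref{SecHeuristic}. Since $\pi^{\rm per}(\cG)\ge 1-O(e^{-2\b})$ by Lemma~\ref{lem:dom}, the push-forward of $\pi^{\rm per}$ under the bijection $w:\cG\to\{-1,1\}^{2L-1}$ lies within $O(e^{-2\b})$ in total variation of the uniform measure on the hypercube, so only the equilibration of the $\cG$-coordinate matters. The two facts I would extract about the full process, matching the bullet points in Section~\ref{SecHeuristic}, are: (a) from \emph{any} initial configuration the process enters $\cG$ within time $o(e^{4\b})$ with probability $1-o(1)$; and (b) the excursions of the process away from $\cG$ have exponentially decaying length, so that on the scale $t=\Theta(e^{4\b})$ the wall-clock time and the local time spent in $\cG$ agree up to a factor $1+o(1)$ with high probability. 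Fact (b) is a routine birth--death estimate on the defect dynamics within a single excursion (Figure~\ref{FigPairMove}). Fact (a) is the one that really uses new technology: it follows from the canonical-path and spectral-profile argument of Section~\ref{SecConCanPath} applied to the hitting time of $\cG$ rather than to the relaxation time, and this is exactly where the interpolated ``L-shaped'' paths come in --- any bound of size $o(e^{4\b})$ suffices, and the bound $\b^8 e^{3.75\b}$ obtained en route to Theorem~\ref{ThmMainResPlus} is comfortably of this form.

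Granting (a) and (b), cutoff for the full chain at $C\b e^{4\b}$ follows from cutoff for the trace chain at $C\b e^{4\b}$, because on this time scale wall-clock time and trace-chain (local) time agree up to a factor $1+o(1)$. For the upper bound, run the process for a burn-in period of length $o(e^{4\b})$ --- during which, by (a), it enters $\cG$ --- and then for a further $C\b e^{4\b}(1+\e)$; by (b) the local time accumulated at $\cG$ in this second phase is $C\b e^{4\b}(1+\e)(1+o(1))$, so the $\cG$-coordinate is $\e'$-equilibrated by trace-chain cutoff, and $\pi^{\rm per}(\cG^c)=o(1)$ closes the estimate. For the lower bound, start the full process at the all-plus configuration, which lies in $\cG$; at $t=C\b e^{4\b}(1-\e)$ one has $\P_{+}(X_t\in\cG)=1-o(1)$, so $\mathrm{Law}(X_t)$ is $o(1)$-close in total variation to the law of the trace chain from $w(+)=\mathbf 1$ at local time $t(1+o(1))<C\b e^{4\b}(1-\e/2)$, which the trace-chain lower bound keeps $(1-\e')$-far from uniform.

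It remains to prove cutoff at $C\b e^{4\b}$ for $\{w(\hat X_t)\}$, for which I would use Corollary~\ref{LemmaHyperComp}: its generator differs from $P_{\rm H}=Q_{\rm H}+c\,Q_{\rm H}^2$, uniformly and in the total-variation norm of the rows, by $O(L^{-9})$, with $c=O(L^{-1})$. Coupling the two continuous-time chains so that they agree until the first disagreement, and using that the trace chain makes only $O(\b L)$ jumps before time $C\b e^{4\b}$ (its total jump rate being $\Theta(L\,e^{-4\b})$), the coupling succeeds with probability $1-O(\b L^{-8})=1-o(1)$, so it is enough to treat the chain with generator $P_{\rm H}$. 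Now $Q_{\rm H}$ is diagonalized by the hypercube characters $\chi_S$, with eigenvalue $-2|S|$ for $|S|$ even and $2|S|-2(2L-1)-2$ for $|S|$ odd (after the overall normalization of $Q_{\rm H}$, which carries the factor $\Theta(e^{-4\b})$); the perturbation $c\,Q_{\rm H}^2$ shifts each eigenvalue by $O(c\,\lambda^2)$, negligible on the relevant time scale. The upper bound then follows from the $L^2$-bound $\|\cdot\|_{\TV}^2\le\tfrac14\sum_{S\ne\emptyset}e^{2\lambda_S t}$, whose binding terms are the $\Theta(L^2)$ characters of weight two (eigenvalue $-4$), giving equilibration after $\tfrac18\log\binom{2L-1}{2}=\tfrac14\log L\,(1+o(1))$ steps in the natural units of $Q_{\rm H}$, i.e.\ at wall-clock time $C\b e^{4\b}(1+o(1))$ since each generator fires at rate $\Theta(e^{-4\b})$; the matching lower bound is Wilson's method for the eigenfunction $\big(\sum_i v[i]\big)^2-(2L-1)$ (eigenvalue $-4$), whose variance under the uniform measure is $\Theta(L^2)$ and whose expectation started from $v=\mathbf 1$ is $2\binom{2L-1}{2}$ times the decaying exponential, hence stays of the order of its standard deviation until the same time.

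I expect the main obstacle to be Corollary~\ref{LemmaHyperComp} itself --- the assertion that the trace of the genuine SPM on $\cG$ is a perturbation this small of the clean hypercube walk. Establishing it calls for a quantitatively sharp form of the rectangular-bubble picture of Figures~\ref{FigPairMove}--\ref{FigCornerWalk} and of the path enumeration in Definition~\ref{DefMinEnMinLPaths}: one must show that an excursion started from a ground state either returns to the same ground state or performs precisely one row- or column-flip, with the correct relative rates, up to total error $O(L^{-9})$, and that the only surviving correction --- excursions that flip two coordinates --- is reproduced exactly by the $c\,Q_{\rm H}^2$ term. The second substantial ingredient is fact (a), the $o(e^{4\b})$ bound on the hitting time of $\cG$ via the interpolated canonical paths, although this is a refinement of the machinery behind Theorem~\ref{ThmMainResPlus} rather than a genuinely new difficulty. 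Fact (b), the hypercube spectral computation, Wilson's method, and the passage from the trace chain to the full chain are all routine.
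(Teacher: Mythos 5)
Your proposal follows essentially the same route as the paper: cutoff is transferred from the trace chain on $\cG$ (compared to the hypercube walk with the extra all-flip generator plus an $O(L^{-1})$ two-flip correction, exactly as in Corollary \ref{LemmaHyperComp} and Lemma \ref{LemmaCutoffSRWWithJump}) to the full dynamics via the $o(e^{4\beta})$ hitting time of $\cG$ (Lemma \ref{LemmaHittingGround}, from Theorem \ref{ThmMainResPlus}) and concentration of the occupation time of $\cG$ (Corollaries \ref{cor:noescape}, \ref{LemmaExpectedExcursionPerGr}), and you correctly identify the quantitative excursion analysis behind Corollary \ref{LemmaHyperComp} as where the real work lies. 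Two harmless slips: the eigenvalue of $\chi_S$ for odd $|S|$ should be $-2|S|-2$ (in units of $a_L$) rather than $2|S|-2(2L-1)-2$, and the coupling failure probability is $O(\beta L^{-1})$ rather than $O(\beta L^{-8})$; neither affects the conclusion since the $|S|=2$ characters still dominate and the error is still $o(1)$.
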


\begin{remark}
The spectral gaps of both chains were computed in \cite{ChlebounSpm2018}. The constant $C$ could be computed from well-known quantities related to simple random walks by carefully calculating the expectations in the random walk coupling of Section \ref{SecPerBoundRes}.
\end{remark}

\section{Construction and Analysis of Canonical Paths} \label{SecConCanPath}

Our arguments for the upper bounds in Theorems \ref{ThmMainResPlus} and \ref{ThmMainResPer} will be based on the method of canonical paths \cite{Sinclair1992}. The idea in this method is to construct a family of (possibly random) paths between any pairs of configurations $\sigma, \eta$, such that the paths do not ``congest'' too heavily on any edge.  In this section, we construct the canonical paths that will be used for those proofs, and also give some initial analysis of their properties. As a guide to the remainder of this section:

\begin{itemize}
\item In Section \ref{SecCanonicalPathAbs}, we recall generic bounds on relaxation and mixing times of a Markov chain in terms of canonical paths.
\item In Section \ref{SubsecPrelPathNot}, we give preliminary notation related to canonical paths.
\item In Section \ref{SubsecPathOld}, we recall the constructions of the main canonical paths used in \cite{ChlebounSpm2018}.
\item In Section \ref{SubsecPathNew}, we construct new ``interpolating" paths and combine them with the paths constructed in Section \ref{SubsecPathOld}.
\item In Section \ref{SecFinalBoundCanonical}, we combine the results of these sub-sections to obtain final bounds on the properties of the canonical paths we have defined.
\end{itemize}

\subsection{Canonical Path Bounds} \label{SecCanonicalPathAbs}

We specialize some earlier canonical path bounds to the context of this paper. Throughout this section and remainder of the paper, the set of edges $\edges$ is taken with respect to the generator $\cL_{\L}^{\t}$ from Equation \eqref{eq:gen}.

\begin{defn}[Path]

A sequence $\gamma = (\h^{(0)},\h^{(1)},\ldots,\h^{(m)}) \in \config$ is called a \textit{path from $\h^{(0)}$ to $\h^{(m)}$}  if  $(\h^{(i-1)},\h^{(i)}) \in \edges$ for all $1 \leq i \leq m$. We say that this path has length $|\gamma| \equiv m$. For $1 \leq i \leq m$, we call the pair $(\h^{(i-1)},\h^{(i)})$ the \textit{$i$'th edge of $\gamma$}. For $\h,\s \in \Omega$, we denote by $\Gamma_{\h,\s}$ the collection of all paths from $\h$ to $\s$. Similarly we let $\Gamma_{\h} = \cup_{\s} \Gamma_{\h,\s}$ be the collection of all paths starting at $\h$ and $\Gamma = \cup_{\h,\s \in \config} \Gamma_{\h,\s}$ be the collection of all paths.

For a path $\gamma = (\h^{(0)},\ldots,\h^{(m)})$, we denote by $\init{\gamma} = \h^{(0)}$ and $\fin{\gamma} = \h^{(m)}$ the \textit{initial} and \textit{final} elements of $\gamma$. If $\gamma_{1}$, $\gamma_{2}$ are two paths with $\fin{\gamma_{1}} = \init{\gamma_{2}}$, we denote by $\gamma_{1} \conc \gamma_{2}$ the concatenation of $\gamma_{1}$ and $\gamma_{2}$ with the repeated element $\gamma_{1}^{(fin)}, \gamma_{2}^{(init)}$ removed. With some abuse of notation, we define $\emptyset \conc \gamma = \gamma \conc \emptyset = \gamma$ for any path $\gamma$.
\end{defn}

Next, for fixed $r>0$, let

\be \label{DefHighDensityRound}
k(r) = \min \{ k \, : \, \pi(+)  \, e^{-\beta k} \leq r \}.
\ee
For fixed $k \in \mathbb{N}$, let
\be \label{DefHighDensitySet}
S_{k} = \{ \sigma \in \config \, : \, |p^{+}(\sigma)| \geq k \}.
\ee

Define the \emph{cost} of a flow:

\begin{defn} [Cost of Flow] \label{DefCostFlow}

Fix $k \in \mathbb{N}$. For each $\h \in S_{k}$, let $F^{(k)}_{\h}$ be a probability measure on paths $\gamma$ in $\config$ that have starting point $\init{\gamma} = \h$ and final point $\fin{\gamma} \in S_{k}^{c}$. We define the \textit{cost} of the flow, $\{F^{(k)}_\h\}_{\h \in \S_k}$ on $S_{k}$, to be:
\be \label{IneqDefCanPathObject}
\mathcal{A}(k) \equiv 2\, \max_{e \in \edges} \sum_{\h \in \config} \sum_{\gamma \ni e} F^{(k)}_{\h}(\gamma) \, | \gamma | \, \frac{\mu(\h)}{\mu(e_{-}) K(e_{-},e_{+})}.
\ee
Note that we have dropped the explicit dependence on $\{ F^{(k)}_{\h} \}_{\h \in S_{k}}$ from this notation for convenience.
\end{defn}

The following is an immediate consequence of Theorem 1.1 of \cite{Goel2006} combined with the multicommodity flow bound in Lemma 5.2 of \cite{ChlebounSpm2018} and the reduction in Inequality (5.7) of \cite{ChlebounSpm2018}:

\begin{theorem} \label{ThmMainSpectralProfileBound}
Fix for all $S_{k}$, $k \in \mathbb{N}$ a collection of paths $\{ F^{(k)}_{\h}\}_{\h \in S_{k}}$  as in Definition \ref{DefCostFlow}. Then

\be \label{IneqMainSpectralProfileBound}
\tmix \leq \int_{4 \, \min_{\s \in \O}{\pi(\s)}}^{16} \frac{2 \mathcal{A}(k(x))}{x} dx.
\ee

\end{theorem}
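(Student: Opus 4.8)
The plan is to obtain the stated bound by stitching together three ingredients exactly as the hypotheses advertise: the spectral-profile mixing bound of Goel--Montenegro--Tetali, a multicommodity-flow lower bound on the relevant Dirichlet eigenvalues, and the very special level-set structure of $\pi$ supplied by \eqref{eq:simple}. Throughout, for $S \subseteq \config$ write $\lambda_1(S) = \inf\{\mathcal{D}(f)/\var(f) : \operatorname{supp}(f) \subseteq S,\ f \not\equiv 0\}$ (the Dirichlet form $\mathcal D$ being the one in \eqref{def:dir}; whether one uses $\var(f)$ or $\|f\|_2^2$ in the denominator changes things only by a factor controlled by $1-\pi(S)$), and set $\Lambda(r) = \inf\{\lambda_1(S):\pi(S)\leq r\}$. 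First I would quote Theorem 1.1 of \cite{Goel2006}, applied to the continuous-time reversible chain with generator $\cL_\Lambda^\t$: after the usual passage from $L^2$ (or $L^\infty$) convergence to total variation and the choice $\epsilon = 1/4$ in the definition of $\tmix$, this yields
\[
\tmix \;\leq\; \int_{4\min_{\sigma}\pi(\sigma)}^{16} \frac{2\,dr}{r\,\Lambda(r)},
\]
so that it suffices to prove $\Lambda(r) \geq \mathcal{A}(k(r))^{-1}$ for $r$ in the range of integration.

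Next comes the level-set reduction (the role of Inequality (5.7) of \cite{ChlebounSpm2018}). By \eqref{eq:simple}, for $\t\in\{+,\mathrm{per}\}$ one has $\pi(\sigma) = \pi(+)\,e^{-\beta|p^\t(\sigma)|}$, so the sets $S_k$ of \eqref{DefHighDensitySet} are precisely the sublevel sets $\{\sigma : \pi(\sigma) \leq \pi(+)e^{-\beta k}\}$. Hence if $\pi(S)\leq r$ then every $\sigma\in S$ satisfies $\pi(\sigma) \leq r$, which by the minimality in \eqref{DefHighDensityRound} forces $|p^\t(\sigma)|\geq k(r)$, i.e. $S \subseteq S_{k(r)}$. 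Since $\lambda_1(\cdot)$ is monotone non-increasing under set inclusion, $\lambda_1(S) \geq \lambda_1(S_{k(r)})$ for every admissible $S$, and taking the infimum over $S$ gives $\Lambda(r) \geq \lambda_1(S_{k(r)})$.

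The third step is the multicommodity-flow bound $\lambda_1(S_k) \geq \mathcal{A}(k)^{-1}$ (Lemma 5.2 of \cite{ChlebounSpm2018}). Fix $k$ and let $f$ be supported on $S_k$. For each $\eta\in S_k$ the flow $F_\eta^{(k)}$ of Definition \ref{DefCostFlow} is carried by paths $\gamma$ from $\eta$ to points of $S_k^c$, where $f$ vanishes; telescoping $f(\eta)^2 = \sum_{e\in\gamma}\big(f(e_-)^2 - f(e_+)^2\big)$, factoring $f(e_-)^2 - f(e_+)^2 = (f(e_-)-f(e_+))(f(e_-)+f(e_+))$, and applying Cauchy--Schwarz followed by $(a+b)^2\leq 2(a^2+b^2)$ produces the standard inequality $\|f\|_2^2 \leq \mathcal{A}(k)\,\mathcal{D}(f)$; converting $\|f\|_2^2$ to $\var(f)$ (here one uses $\pi(S_k)\leq 1/2$, available from Lemma \ref{lem:dom} on the relevant range) gives $\lambda_1(S_k)\geq \mathcal{A}(k)^{-1}$. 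Chaining the three steps yields $\Lambda(r) \geq \mathcal{A}(k(r))^{-1}$, hence $\tfrac{2}{r\Lambda(r)} \leq \tfrac{2\mathcal{A}(k(r))}{r}$, and substituting into the spectral-profile integral gives the theorem.

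The main obstacle is not any single step --- each is routine --- but reconciling the normalizations of the three cited sources so that the final constants come out exactly as $4\min_\sigma\pi(\sigma)$, $16$, and $2\mathcal{A}(k(x))$. In particular one must: (i) pin down the precise definition of $\Lambda(r)$ used in \cite{Goel2006} (variance versus $L^2$ norm) and track the attendant factor; (ii) handle the regime $r\geq 1/2$, where $\Lambda$ is customarily frozen at $\Lambda(1/2)$ and the contribution to the integral from $[1/2,16]$ must be checked to be controlled by $\mathcal{A}(k(r))/r$ (using $\pi(S_{k(r)})\leq 1/2$, via Lemma \ref{lem:dom}, and treating the boundary case $k(r)=0$, where $S_{k(r)}^c=\emptyset$ and the flow bound degenerates, separately); and (iii) absorb the factor $2$ already built into $\mathcal{A}(k)$ in Definition \ref{DefCostFlow} together with the continuous-time and TV-versus-$L^\infty$ conversions in Theorem 1.1 of \cite{Goel2006}. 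Making all of these line up is the only delicate part.
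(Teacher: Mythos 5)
Your proposal is correct and follows exactly the route the paper takes: the paper proves this theorem by citing precisely the three ingredients you assemble (Theorem 1.1 of \cite{Goel2006} for the spectral-profile integral, the level-set reduction of Inequality (5.7) of \cite{ChlebounSpm2018} using \eqref{eq:simple}, and the multicommodity-flow bound of Lemma 5.2 of \cite{ChlebounSpm2018}). Your reconstruction, including the bookkeeping caveats about normalizations, is a faithful expansion of that one-line argument.
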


\subsection{Preliminary Notation} \label{SubsecPrelPathNot}

We will use the following notation for defects frequently:

\begin{defn} [Neighbouring Defects] \label{DefNeighbourDefect}

Let $\Lambda = [\ell_{1}:\ell_{2}] \times [\ell_{3}:\ell_{4}]$. Fix $p \in \pconfig$ and define
\be
D(p) = \{x \in \pconfig \,: \, p_{x} = -1 \}.
\ee

For $p \in \pconfig$ and $x = (x_{1},x_{2}) \in D(p)$, define the \textit{left row neighbour}, \textit{right row neighbour}, \textit{down column neighbour} and \textit{up column neighbour} of the defect at $x$ by
\be  \label{EqNearestNeighbours}
\ell(x) &= (\max\{x_1' < x_1\,:\, (x_1',x_2) \in D(p)\},x_{2})\,, \\
r(x) &= (\min\{x_1' > x_1\,:\, (x_{1}',x_{2}) \in D(p)\}, x_{2})\,, \\
d(x) &= (x_{1},\max\{x_2'<x_2\,:\, (x_{1},x_{2}') \in D(p)\})\,, \\
u(x) &= (x_{1},\min\{x_{2}'> x_2\,:\,(x_1,x_2') \in D(p)\})\,,
\ee
when they exist; otherwise by $\emptyset$ as appropriate.
Finally, by a small abuse of notation we define
$
D(\sigma) = D(p^{+}(\sigma))
$
for $\sigma \in \O_\L^+$.
\end{defn}

We now define a simple ``base path" that flips all the spins in a rectangle $R \subset \Z^{2}$ in a sensible order. Typically in application, the rectangle $R$ in the following definition will have at least three defects at its vertices in the initial configuration $\sigma$, and this ``base path'' removes two or four of them without ever adding more than two in intermediate stages:

\begin{defn} [Rectangle-Removal Path] \label{DefRectRemovePath}
Let $\Lambda = [\ell_{1}\!:\!\ell_{2}]\times[\ell_{3}\!:\!\ell_{4}]$. Given $\sigma \in \config$ and a rectangle $R = [x_{1}\!:\!x_{2}] \times [y_{1}\!:\!y_{2}] \subset \cB(\Lambda)$ we define a path $\gamma_{\sigma,R}$, starting at $\sigma$, that flips all the spins at all the sites in $\cB_{-}(R)$. We construct the path according to the following cases:
\begin{enumerate}
\item If $(x_{1},y_{1}), (x_{1},y_{2}), (x_{2},y_{2}) \in D(\sigma)$, \textit{or} fewer than 3 of the four corners of the rectangle are in $D(\sigma)$, we define $\gamma_{\sigma,R}$ to be the path that flips all spins in $\cB_{-}(R)$ in lexicographic order.
\item $(x_{2},y_{2}), (x_{1},y_{2}), (x_{2},y_{1}) \in D(\sigma)$  but $(x_{1},y_{1}) \notin D(\sigma)$, we define $\gamma_{\sigma,R}$ to be the path that that flips all spins in $\cB_{-}(R)$ in antilexicographic order.
\item Otherwise, denote by $M \, : \, \config \mapsto \config$ the ``mirror reflection" map
\be \label{EqMirrorMap}
\rm{M}(\sigma)_{i,j} = \sigma_{i,\ell_{4} + \ell_{3} - j}\,
\ee
which flips the lattice in the $x$-axis.
Note that $M(R)$ is now in one of the two previous cases, and we define $\gamma_{\sigma,R} = M^{-1}(\gamma_{M(\sigma), M(R)})$.
\end{enumerate}
\end{defn}

We observe that after flipping the spins in a region $\cB_{-}(R)$ the defect configuration is changed only at the plaquettes that contain an odd number of spins in $\cB_{-}(R)$, i.e. at the four vertices of the rectangle $R$.
Furthermore, although a \textit{spin configuration} $\eta \in \gamma_{\sigma,R}$ can be very different from $\sigma$, their associated \textit{defect configurations} $D(\eta), D(\sigma)$ can only differ in a few locations. More precisely, the sites that $\eta, \sigma$ differ can be written as a union of (at most) 2 rectangles, say $A$ and $B$; then $D(\sigma), D(\eta)$ can differ (at most) at the vertices of $\cB(A)$ and $\cB(B)$.

Denote by $\rect_{\L}$ the collection of rectangles in the lattice $\L$. For $R = [x_{1}: x_{2}] \times [y_{1} : y_{2}] \in \rect_{\L}$, we informally call $y_{2}-y_1$ the ``height'' and $x_{2} - x_{1}$ the ``width'' of $R$. 
Finally, For any set $S$, denote by $\mathcal{P}(S)$ the power set of $S$. For a set $A \subset \L$ of points on the lattice we define $R(A)$ as the smallest rectangle containing $A$.

\subsection{Path Construction: Previous Constructions} \label{SubsecPathOld}

The main challenge in \cite{ChlebounSpm2018} was the construction of a reasonable ``initial" part of a canonical path, and in particular the choice of a good measure on rectangles. We will use this initial part again in the present paper. Define $F \, : \, \config \times [1:L] \mapsto \mathcal{P}(\rect_{\L})$, and $n \, : \, \config \mapsto [1:L]$, to be the two functions given in Definitions 5.15 and 5.16 of \cite{ChlebounSpm2018}, respectively.

Roughly speaking, $n(\sigma) \approx \frac{|D(\sigma)|}{100 L}$. $F(\sigma,i)$ is more complicated: for the vast majority of configurations $\sigma \sim \pi_{\L}$, the set $\cup_{i} F(\sigma,i)$ consists of essentially all rectangles with at least 3 defects in their corners. For a very small but significant collection of ``bad" configurations, $\cup_{i} F(\sigma,i)$ instead picks out a smaller subset of distinguished rectangles. See the discussion around in Section 4.3 of \cite{ChlebounSpm2018} for details and heuristics.

The canonical path section from $\sigma$ we use is to simply choose $i \in [1 : n(\sigma)]$ uniformly at random, and then $R \in F(\sigma,i)$ independently uniformly at random,  then ``collapse" this rectangle according to the path constructed in Definition \ref{DefRectRemovePath}:

\begin{defn}[Partial Random Path] \label{DefPartialPath}

For $\sigma \in \Omega_{[1:L]^{2}}$, define the ``partial path" probability measure $\flowtot{\sigma}$ by
\be
\flowtot{\sigma}(\gamma) = \frac{1}{\m(\sigma) \, |F(\sigma,i)|}
\ee
if $\gamma = \gamma_{\sigma,R}$ for some $R \in F(\sigma,i)$, and 0 otherwise.\footnote{As shown in \cite{ChlebounSpm2018}, $F(\sigma,i) \cap F(\sigma,j) = \emptyset$ for $i \neq j$, so this is in fact a probability measure.}
\end{defn}

We will not refer to the details of this path, merely  re-use bounds on path length, energy and congestion that were calculated in \cite{ChlebounSpm2018}.
It is also helpful to define the following naive path, also defined and analysed in \cite{ChlebounSpm2018}, which simply flips all $-1$ spins to $+1$ spins, in lexicographic order:

\begin{defn}[Naive Paths] \label{DefNaivePath}
For fixed $\sigma \in \Omega_{[1:L]^{2}}$, we define a ($\{0,1\}$-valued) probability measure $\flowna{\sigma}$ on $\Gamma_{\sigma, +}$ by giving an explicit algorithm for the path given weight 1 by this measure. Let $z^{(1)},  z^{(2)}, \ldots$ be all points of $[1:L]^{2}$ in lexicographic order.

\begin{enumerate}
\item Initialize by setting $\gamma = \{\sigma\}$ and $i=0$.
\item While $\fin{\gamma} \neq +$, do the following:
\begin{enumerate}
\item Set $i = i +1$.
\item If $\fin{\gamma}_{z^{(i)}} = -1$, do the following:
\begin{enumerate}
\item Set $\sigma = \fin{\gamma}$.
\item Set $\gamma = \gamma \conc ( \fin{\gamma}, \sigma^{z^{(i)}})$.
\end{enumerate}
\end{enumerate}
\item Return the path $\gamma$.
\end{enumerate}
\end{defn}
This path is rather inefficient in terms of energy, but is very short.

\subsection{Path Construction: New Intermediate Elements} \label{SubsecPathNew}

In this section, we construct a new ``intermediate" type of path and then put together our building blocks to define a path measure. Before giving the new definitions, we give some quick heuristics. Recall that we will be using all of our canonical paths to bound terms of the following form, appearing in Equation \eqref{IneqDefCanPathObject}:
\be
\mathcal{A}(k) \equiv 2\, \max_{e \in \edges} \sum_{\h \in \Omega} \sum_{\gamma \ni e} F^{(k)}_{\h}(\gamma) \, | \gamma | \, \frac{\mu(\h)}{\mu(e_{-}) K(e_{-},e_{+})}.
\ee

In the literature on canonical path arguments, people often describe the term $\sum_{\h \in \Omega} \sum_{\gamma \ni e} F^{(k)}_{\h}(\gamma)$ as the \textit{congestion}, the term $ | \gamma |$ as the \textit{path length}, and the term $\frac{\mu(\h)}{\mu(e_{-}) K(e_{-},e_{+})}$ as the \textit{energy.} We make some rough observations about these terms for a random path started at $\sigma \in S_{k}$:\footnote{The following describes only the ``worst-case" edges, and ignores polynomial factors of $\beta$. }

\begin{itemize}
\item If we sample $i \sim \mathrm{Unif}([1:n(\sigma)])$, $R \sim \mathrm{Unif}(F(\sigma,i))$, then the short (random) path $\gamma_{\sigma,R}$ will typically have congestion $\frac{L}{|D(\sigma)|}$, path length $L^{2}$, and energy $e^{2 \beta}$. In addition, the \textit{final element} of this path will have energy at most $e^{-2 \beta}$ times that of the \textit{first element.}
\item If we follow the naive path in Definition \ref{DefNaivePath}, the congestion and energy will both typically be $e^{\Theta(L)}$, but the \textit{total} path length will be only $L^{2}$.
\end{itemize}

Using only the first type of path gives a worst-case congestion of $\frac{L}{|D(\sigma)|}$, worst-case path length of $L^{4}$ (since there may be $\Theta(L^{2})$ rectangles to remove, each requiring a path of length $\Theta(L^{2})$), and worst-case maximum energy $e^{2 \beta}$. In \cite{ChlebounSpm2018} we use the first type of path only to clean up $\Theta(   \beta L)$ defects, achieving an energy surplus of $e^{\Theta( \beta  L)}$ by the end; we could then append the naive path, ensuring that the total path length remained $O(\beta L^{3})$.

In this section, we introduce a new type of path that will go in-between these two. It will have worst-case congestion of $e^{O(\beta)}$, worst-case path length of $O(\frac{L^{2}}{\sqrt{k}})$, and worst-case maximum energy of  $e^{O(\beta)}$. We will use the first path to remove $O(\beta^{2})$ defects, the new intermediate path to remove the next $O(\beta L)$, and the final naive path to remove the remainder. The result will have the same worst-case congestion and energy as in \cite{ChlebounSpm2018}, but the worst-case bound on the total path length will improve from $O(\beta L^{3})$ to $O(L^{2} \max(\beta^{2},\min(\sqrt{k}, \frac{L}{\sqrt{k}})))$.

The new path itself iteratively deletes the smallest-area region within some class of ``simple" regions. To be more precise:

\begin{defn} [Mid Path] \label{DefNewPath}
Fix $\sigma \in \config$. Define the pair $x,y$ by \[(x,y) = \argmin_{(a,b) \in D(\sigma)^2}\|a-b\|_1 \quad \textrm{with $x<y$ in lexicographic order;}\]
if there is a tie among several pairs, we break ties by comparing their first and then second elements in lexicographic order.


Denote by $R_{1}$ the smallest  rectangle in the family of rectangles with corners $x$, $(y_1,x_2)$, and $v(x,y)$, where $v(x,y)\in D(\s)$ is a defect vertically aligned with $x$ or $y$.
Let $R_{2}$ be the smallest rectangle in the family of rectangles with corners $y$, $(y_1,x_2)$, and $h(x,y)$, where $h(x,y) \in D(\s)$ is a defect horizontally aligned with $x$ or $y$ (see Fig. \ref{FigRelPostCong}).
Finally, define $\gamma(\s) = \gamma_{\sigma,R_{1}} \conc \gamma_{\eta,R_{2}}$ where $\h = \fin{\gamma_{\sigma,R_{1}}}$. If the defects at $x$ and $y$ are in the same row (respectively the same column) then $\cB_-(R_2)$ (respectively $\cB_-(R_1)$) will be empty and $\gamma(\s) = \gamma_{\s,R_2}$ (respectively $\gamma(\s) = \g_{\s,R_1}$).

For fixed $\sigma \in \Omega_{[1:L]^{2}}$, we define a ($\{0,1\}$-valued) probability measure $\flownew{\sigma}$ on $\Gamma_{\sigma, +}$ to be the measure concentrated on $\gamma(\s)$.
\end{defn}

We  now give bounds on path length and energy of these mid paths:

\begin{lemma}[Path-Length for Mid Paths]\label{LemmaPathLength} 
Fix $\L = [1:L]^{2}$ and $\sigma \in \config$. Let $\gamma=\g(\s)$ be as in Definition \ref{DefNewPath}, then
\be
|\gamma | \leq  6 \frac{ L^{2}}{\sqrt{|D(\sigma)|}}.
\ee
\end{lemma}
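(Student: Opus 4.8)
The plan is to bound $|\gamma|$ by bounding the two pieces $|\gamma_{\sigma,R_1}|$ and $|\gamma_{\eta,R_2}|$ separately, since $|\gamma| = |\gamma_{\sigma,R_1}| + |\gamma_{\eta,R_2}|$ (the concatenation removes one repeated vertex but this only helps). Each rectangle-removal path $\gamma_{\sigma,R}$ from Definition \ref{DefRectRemovePath} flips every spin in $\cB_-(R)$ exactly once, so its length is precisely $|\cB_-(R)|$, which for a rectangle of width $w$ and height $h$ is at most $wh$. Hence it suffices to show that both $R_1$ and $R_2$ have area at most (a constant times) $\frac{L^2}{\sqrt{|D(\sigma)|}}$, and then collect constants.

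The key geometric input is a pigeonhole/packing estimate on the minimal pairwise $\ell_1$-distance $\delta := \min_{(a,b)\in D(\sigma)^2, a\neq b}\|a-b\|_1 = \|x-y\|_1$. First I would observe that the $\ell_1$-balls of radius $\delta/2$ around the points of $D(\sigma)$ are (essentially) disjoint and all contained in a slightly enlarged $L\times L$ box, so $|D(\sigma)|\cdot \Theta(\delta^2) \lesssim L^2$, giving $\delta \lesssim \frac{L}{\sqrt{|D(\sigma)|}}$. (One has to be mildly careful about the lattice/off-by-one issues and about $D(\sigma)$ being small, but under the paper's convention that inequalities need only hold for large $\beta$ — and noting $|D(\sigma)|=0$ makes the claim vacuous while $|D(\sigma)|\geq L^2$ forces $\delta=1$ — this is routine.) Next I must control the \emph{other} side lengths of $R_1$ and $R_2$. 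The rectangle $R_1$ has one side of length $|x_1-y_1|\leq \delta$ and another side determined by the vertical distance from $x$ (or $y$) to the nearest defect $v(x,y)$ in that column; but that vertical distance is at least... no — it can in principle be large. The point is that $R_1$ is chosen as the \emph{smallest} such rectangle, and the defect $v(x,y)$ can be taken to be the nearest defect above-or-below whichever of $x,y$ has a defect in its column at all; crucially the height of $R_1$ is then controlled because the existence of $x$ and $y$ at $\ell_1$-distance $\delta$ means there is a defect within vertical distance... Here is where I'd use that $R_1$ has corners $x$, $(y_1,x_2)$, and $v(x,y)$: its width is $|x_1-y_1|\le\delta$ and its height is the distance from $x_2$ (or $y_2$) up/down to $v(x,y)$. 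I would argue that this height is $\lesssim \frac{L^2}{\delta |D(\sigma)|}\cdot(\text{const})$ is NOT automatic; instead the correct bound should come from: the total number of defects is $|D(\sigma)|$, packed with pairwise distance $\geq\delta$, so a single column or a $\delta$-wide strip contains $\lesssim \frac{L}{\delta}\cdot\frac{L}{\delta}\cdot\delta = \frac{L^2}{\delta}$... — so actually I would bound $\mathrm{area}(R_i)\le \delta \cdot (\text{other side})$ and bound the other side by $L$, giving $\mathrm{area}(R_i)\le \delta L \lesssim \frac{L^2}{\sqrt{|D(\sigma)|}}$. That already suffices, and avoids the subtle strip-counting.

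So the streamlined argument is: $|\gamma_{\sigma,R_i}| \le |\cB_-(R_i)| \le \mathrm{width}(R_i)\cdot\mathrm{height}(R_i)$; one of these two factors is at most $\delta = \|x-y\|_1$ by construction (the width $|x_1-y_1|$ for $R_1$, a width or height $\le\delta$ for $R_2$ — one checks the three cases of Definition \ref{DefRectRemovePath} and the degenerate same-row/same-column cases where one of $R_1,R_2$ is empty), and the other factor is trivially at most $L$; combined with the packing bound $\delta \le 3\frac{L}{\sqrt{|D(\sigma)|}}$ (with $3$ a safe constant absorbing lattice corrections) this gives $|\gamma_{\sigma,R_i}| \le 3\frac{L^2}{\sqrt{|D(\sigma)|}}$, and summing the two pieces yields $|\gamma| \le 6\frac{L^2}{\sqrt{|D(\sigma)|}}$.

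The main obstacle I anticipate is pinning down the packing constant and handling the boundary/degenerate configurations cleanly — in particular verifying that in every branch of Definition \ref{DefRectRemovePath} (and in the same-row/same-column special cases) at least one side of each nonempty rectangle among $R_1,R_2$ is genuinely bounded by $\|x-y\|_1$, and confirming that the defects $v(x,y)$, $h(x,y)$ invoked in Definition \ref{DefNewPath} always exist (so that $R_1,R_2$ are well-defined) — this presumably follows from the parity constraint that each row and column of $p^\t(\sigma)$ has an even number of defects, so a column containing one of $x,y$ as a defect contains at least one more. None of this is deep, but it is the part that needs care.
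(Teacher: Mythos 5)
Your streamlined argument is correct and is essentially the paper's proof: a pigeonhole bound $\min_{a,b\in D(\sigma)}\|a-b\|_1\le 3L/\sqrt{|D(\sigma)|}$, followed by the observation that each of $R_1,R_2$ has one side bounded by this minimal distance (the paper uses $\|\cdot\|_\infty\le\|\cdot\|_1$) and the other side trivially bounded by $L$, giving $|\gamma|\le 2L\cdot 3L/\sqrt{|D(\sigma)|}$. The paper does not belabour the degenerate cases you flag, but your handling of them is consistent with its conventions.
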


\begin{proof}










By a straightforward pigeon-hole argument we have
\be 
\label{IneqL1SquareSize}
\min \{ \| a - b \|_{1} \, : \, a,b \in D(\sigma) \} \leq  2 \frac{L}{\sqrt{|D(\sigma)|}} +1 \leq 3 \frac{L}{\sqrt{|D(\sigma)|}}.
\ee
The path $\gamma$ involves flipping all spins in two rectangles, both of which have one direction of size at most $\min \{ \| a - b \|_{\infty} \, : \, a,b \in D(\sigma) \}$ and the other direction of size at most $L$. Thus,
\be
|\gamma| &\leq 2 L \min \{ \| a - b \|_{\infty} \, : \, a,b \in D(\sigma) \}\leq 6 \frac{ L^{2}}{\sqrt{|D(\sigma)|}}.
\ee

\end{proof}



The following is very conservative:

\begin{lemma} [Energy for Mid Paths]  \label{LemmaNRGSmallRegRem}

Fix $\L = [1:L]^{2}$ and $\sigma \in \config$. Let $\gamma = \gamma(\s)$ be as in Definition \ref{DefNewPath}, and let $e \in \gamma$. Then
\be
\frac{\pi(\s)}{\pi(e_-)\cL(e_-,e_+)} \leq e^{52 \beta}\,.
\ee

\end{lemma}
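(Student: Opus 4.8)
The plan is to substitute the explicit formulas for $\pi$ and for the Metropolis rates, reducing the claim to a very crude count of how many defects a Mid Path can create. Since the construction uses $+$ boundary conditions and every configuration appearing on the path lies in $\O_\L^+$, Equation \eqref{eq:simple} gives $\pi(\zeta) = \pi(+)\, e^{-\beta|p^+(\zeta)|}$ for every such $\zeta$, and a one-line computation from \eqref{eq:hamtau} shows $H^+_\L(\zeta^x) - H^+_\L(\zeta) = |p^+(\zeta^x)| - |p^+(\zeta)|$, so by \eqref{eq:rates} the single-spin rate along an edge $e = (e_-,e_+)$ is $\cL(e_-,e_+) = \min\{ e^{-\beta(|p^+(e_+)| - |p^+(e_-)|)},\, 1\}$. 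Combining these,
\be
\frac{\pi(\s)}{\pi(e_-)\,\cL(e_-,e_+)} = e^{\beta(|p^+(e_-)| - |p^+(\s)|)}\,\max\{ e^{\beta(|p^+(e_+)| - |p^+(e_-)|)},\, 1\},
\ee
so it suffices to bound the two exponents $|p^+(e_+)| - |p^+(e_-)|$ and $|p^+(e_-)| - |p^+(\s)|$ from above.

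The first is immediate: flipping a single spin changes at most four plaquette variables, so $|p^+(e_+)| - |p^+(e_-)| \le 4$ and the second factor above is at most $e^{4\beta}$. For the second exponent I would use the structural observation recorded right after Definition \ref{DefRectRemovePath}: every configuration on a rectangle-removal path $\gamma_{\zeta,R}$ differs from $\zeta$ only on a union of at most two rectangles $A,B$, so its defect configuration differs from that of $\zeta$ only at the four vertices of $\cB(A)$ and the four vertices of $\cB(B)$, that is, in at most $8$ positions. The Mid Path is $\gamma(\s) = \gamma_{\s,R_{1}} \conc \gamma_{\eta,R_{2}}$ with $\eta = \fin{\gamma_{\s,R_{1}}}$. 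If $e_-$ lies on the first leg then $|p^+(e_-)| \le |p^+(\s)| + 8$ directly. If $e_-$ lies on the second leg then $|p^+(e_-)| \le |p^+(\eta)| + 8$; and since $\eta$ is obtained from $\s$ by collapsing $R_{1}$, its defect configuration differs from that of $\s$ only at the four vertices of $R_{1}$, so $|p^+(\eta)| \le |p^+(\s)| + 4$ and hence $|p^+(e_-)| \le |p^+(\s)| + 12$.

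Putting the two bounds together gives $\frac{\pi(\s)}{\pi(e_-)\,\cL(e_-,e_+)} \le e^{12\beta}\cdot e^{4\beta} = e^{16\beta} \le e^{52\beta}$, which proves the statement with a wide margin --- consistent with the remark that the bound is ``very conservative''. There is no genuine obstacle here; the only point requiring a moment's care is the bookkeeping at the concatenation vertex, where the second rectangle-removal path is compared to $\eta$ rather than to $\s$, so one must add back the at most four defects created by collapsing $R_{1}$. Everything else is a direct substitution into the definitions of $\pi$ and the Metropolis rates.
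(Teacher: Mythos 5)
Your proof is correct and follows essentially the same route as the paper's: substitute the Gibbs formula and Metropolis rate, bound the number of defect positions at which $e_-$ can differ from $\s$ using the structural remark after Definition \ref{DefRectRemovePath}, and bound the single-flip rate trivially by $e^{-4\beta}$. Your bookkeeping (at most $12$ changed defects, giving $e^{16\beta}$) is in fact sharper than the paper's cruder count of at most $48$ changed plaquettes over three rectangles, but both land comfortably under the stated $e^{52\beta}$.
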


\begin{proof}
  Fix $\L = [1:L]^{2}$, $\sigma \in \config$, and an $e \in \gamma$ as in the statement.
  By definition, any configuration $e_- \in \gamma$ differs from $\sigma$ by flipping spins in at most three rectangles.
  Observe that the defect configurations $D(\s)$ and $D(e_-)$ can only differ at the plaquettes touching the corners of the rectangle, of which there are at most $48$.
  Finally we apply the trivial bound on the jump rates $\cL(e_-,e_+) \geq e^{-4\b}$.
\end{proof}

\begin{lemma} [Congestion for Mid Paths] \label{LemmaCongSmallRegRem}

Fix $\L = [1:L]^{2}$. For $\sigma \in \config$, denote by $\gamma(\sigma)$ the path constructed in Definition \ref{DefNewPath}. For an edge $e = (e_{-}, e_{+})$ with $e_{-}, e_{+} \in \config$, we have
\be
|\{ \sigma \, : \, e \in \gamma(\sigma) \}| \leq 4 e^{6 \beta}.
\ee

\end{lemma}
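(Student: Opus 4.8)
The plan is to bound the number of configurations $\sigma$ for which a fixed edge $e = (e_-, e_+)$ lies on the path $\gamma(\sigma)$ by showing that $\sigma$ can be reconstructed, up to an $O(e^{6\beta})$ multiplicity, from the data of $e$. Recall from Definition \ref{DefNewPath} that $\gamma(\sigma) = \gamma_{\sigma,R_1} \conc \gamma_{\eta,R_2}$, where the rectangles $R_1, R_2$ are determined by the closest pair of defects $x, y$ in $D(\sigma)$ together with the nearest vertically/horizontally aligned defects. The key structural observation, which I would establish first, is that every configuration $e_-$ along $\gamma(\sigma)$ differs from $\sigma$ only by flipping spins inside a union of at most three rectangles contained in $R_1 \cup R_2$ (this is already used in the proof of Lemma \ref{LemmaNRGSmallRegRem}). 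Consequently, the defect configurations $D(\sigma)$ and $D(e_-)$ agree except possibly at the $\leq 48$ plaquettes touching the corners of $R_1$ and $R_2$.

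The heart of the argument is then: given $e_-$ (which is one of the two components of $e$), I would show that the rectangles $R_1, R_2$ are determined up to a bounded number of choices. Since $x, y$ is the $\ell_1$-closest pair in $D(\sigma)$ with ties broken lexicographically, and since $D(\sigma)$ and $D(e_-)$ differ only near the corners of $R_1 \cup R_2$, the closest pair in $D(e_-)$ must either coincide with $\{x,y\}$ or be one of a bounded number of pairs involving the $\leq 48$ ``exceptional'' plaquette locations. In fact $x, y$ themselves are two of the corners of $R_1$ (and of $R_2$), hence the whole of $R_1, R_2$ — and therefore the entire support of the spin-flips performed along $\gamma(\sigma)$ — is confined to a region determined up to $O(1)$ choices by $e_-$. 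Having pinned down $R_1 \cup R_2$ (up to this bounded ambiguity), $\sigma$ and $e_-$ can differ \emph{only} on the spins inside $R_1 \cup R_2$, which is a union of two rectangles each with one side of length at most $\min\{\|a-b\|_\infty : a, b \in D(\sigma)\} \leq 3L/\sqrt{|D(\sigma)|}$; but more importantly for the \emph{count} we only need that the number of possible spin patterns on the symmetric difference is controlled. Here I would use that the base paths $\gamma_{\sigma, R}$ of Definition \ref{DefRectRemovePath} flip spins in $\cB_-(R)$ in a fixed (lexicographic or antilexicographic) order, so knowing $e_-$ and $R$ determines how far along the flipping has progressed, and hence determines $\sigma$ \emph{exactly} once a bounded amount of combinatorial data (which of $R_1$ versus $R_2$ we are in, which of the $O(1)$ candidate rectangle-pairs, and possibly the orientation/mirror-case from Definition \ref{DefRectRemovePath}) is specified.

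Thus the counting reduces to: (i) the choice of $e_- \in \{e_+, e_-\}$ — a factor of $2$; (ii) the $O(1)$ choices of which defect pair $\{x,y\}$ and aligned defects produced $R_1, R_2$; and (iii) — this is where the $e^{6\beta}$ comes from — the possibility that the ``closest pair'' data of $\sigma$ is \emph{not} visible in $e_-$ because the relevant defects have already been annihilated or created during the path. To handle (iii) I would argue that although the defects at the corners of $R_1$ have been destroyed by the time the path passes through $e_-$, the defect at $x$ (or $y$) still sits within $\ell_1$-distance $O(L/\sqrt{|D(\sigma)|})$ of where the currently-visible defects lie; using the pigeon-hole bound \eqref{IneqL1SquareSize} and the fact that $|D(\sigma)| \leq L^2$, one sees that $\sigma$ has at least $|D(\sigma)|$ defects packed into the box, so a single defect location, once we know it lies near a visible feature of $e_-$, contributes only polynomially in $L$ — but we are on the critical scale $L = L_c = \lfloor e^{\beta/2} \rfloor$, so polynomial-in-$L$ factors are $e^{O(\beta)}$, and a conservative accounting gives the stated $4 e^{6\beta}$.

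The main obstacle I anticipate is step (iii): cleanly showing that $\sigma$ is recoverable from $e_-$ after the destruction of defects, i.e. that the ``fingerprint'' of the closest-pair rule survives in $e_-$ up to a controlled number of alternatives. The cleanest route is probably to not try to recover $\{x,y\}$ from $D(e_-)$ directly, but instead to observe that $e_+$ is reached from $e_-$ by a single spin flip at a known site $z$, that $z$ must lie in $\cB_-(R_1)$ or $\cB_-(R_2)$, and hence $z$ localizes one corner region of the relevant rectangle to within $O(L/\sqrt{|D(\sigma)|})$; iterating this kind of localization argument over the (at most two) rectangles, together with the fixed flipping order, should nail down $\sigma$ up to the claimed $e^{O(\beta)}$ multiplicity. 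Since the lemma is explicitly labelled as giving a conservative bound, I would not optimize constants and would freely absorb all polynomial-in-$L$ and polynomial-in-$\beta$ losses into the exponential $e^{6\beta}$.
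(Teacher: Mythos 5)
Your key structural observation --- that any $e_-$ on $\gamma(\sigma)$ differs from $\sigma$ only by flipping the spins inside a union of at most three rectangles --- is exactly the fact the paper uses, but the paper then stops: each rectangle is specified by its bottom-left and top-right corners, i.e.\ by at most $L^2\cdot L^2=L^4$ choices, so the number of compatible $\sigma$ is at most $1+L^4+L^8+L^{12}\leq 4L^{12}\leq 4e^{6\beta}$ on the critical scale $L=L_c=\lfloor e^{\beta/2}\rfloor$. That is the entire proof. Everything in your steps (ii) and (iii) --- recovering the closest pair $\{x,y\}$, the aligned defects $v(x,y)$, $h(x,y)$, the tie-breaking, and the progress along the lexicographic flipping order from the visible defect structure of $e_-$ --- is unnecessary for the stated bound, and as written it is also the weakest part of your argument: the defects $x$ and $y$ may well have been annihilated by the time the path reaches $e_-$, and the claim that the closest pair of $D(\sigma)$ is determined by $D(e_-)$ ``up to a bounded number of choices'' is not justified and would require real work (it is essentially the content of the much more delicate Lemma \ref{LemmaAncestryLongNewPathBound}, which counts \emph{completed} mid paths ending at a fixed $\eta$ and does obtain polynomial-in-$L$ bounds such as $L^3$ and $L^4$, precisely by the kind of reconstruction you are attempting). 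For the present lemma the target $4e^{6\beta}=4L^{12}$ is so generous that the crude rectangle enumeration suffices; I would recommend you drop the reconstruction machinery here and reserve it for the compatible-path counting, where it is actually needed.
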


\begin{proof}
  Following the same argument as the previous lemma, $e_-$ and $\s$ differ by spins in either zero, one, two, or three rectangles. Notice that each rectangle is uniquelly specified by it's top right and bottom left corner of which there are at most $L^2$ choices for each. It follows that  $|\{ \sigma \, : \, e \in \gamma(\sigma) \}| \leq L^{12}+L^{8}+L^4 + 1 \leq 4 e^{6\b}$.
\end{proof}
\begin{figure}[t]
	\centering
	\includegraphics[width=0.5\textwidth]{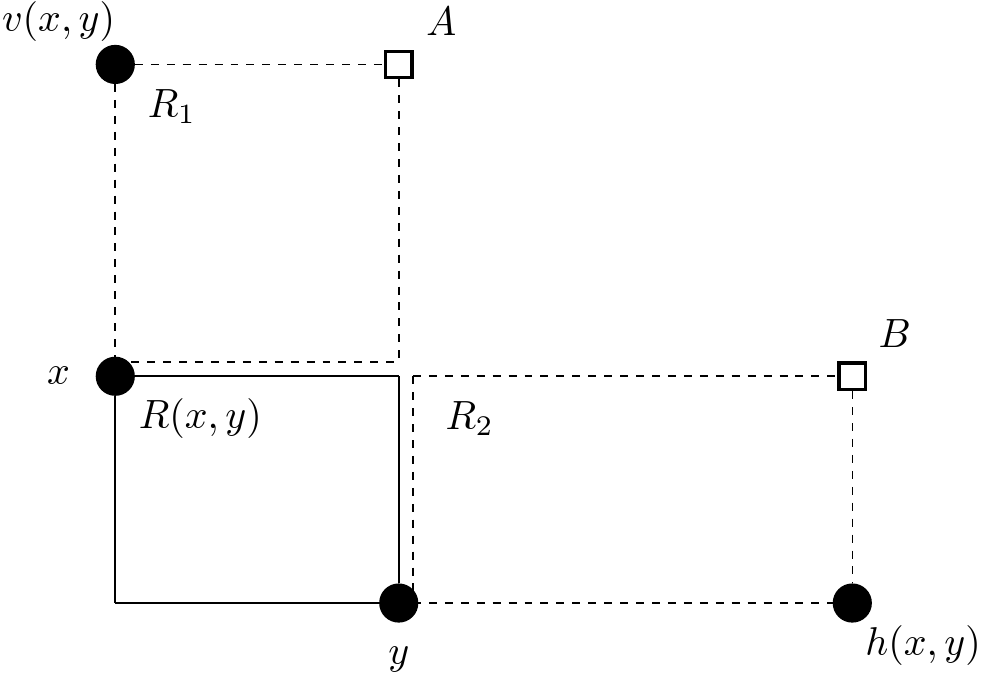}
	\caption{\label{FigRelPostCong} An example of the rectangles $R_1$, and $R_2$ as in Definition \ref{DefNewPath}, as well as the associated positions of the closest vertical and horizontal defects, $v(x,y)$ and $h(x,y)$ respectively. The points $A$ and $B$ are defined in the proof of Lemma \ref{LemmaAncestryLongNewPathBound}.}
\end{figure}
We now analyse the congestion associated with long, complete, parts of the full mid paths constructed in Definition \ref{DefNewPath}.
We do this by bounding the number of compatible paths associated with $\flownew{\cdot}$ which end at a certain configuration.
\begin{defn} [Compatible Mid Paths] \label{DefAncNonNew}
	
	Fix $\eta \in \Omega_{[1:L]^{2}}$ and $k \in 2\N$. We now define the set $\longnewanc{k}{\eta} \subset \Gamma$ of \textit{long compatible new paths} associated with $\eta$ as follows. Say $\gamma \in \longnewanc{k}{\eta}$ if it satisfies all of the following:
	
	\begin{enumerate}
		\item $\fin{\gamma} = \eta$, and
		\item $|D(\init{\gamma})| = |D(\eta)| + k$, and
		\item There exists $\sigma \in \Omega_{[1:L]^{2}}$ such that $\flownew{\sigma}(\gamma) = 1$.
	\end{enumerate}
	
\end{defn}

\begin{lemma} [Number of Compatible Mid Paths] \label{LemmaAncestryLongNewPathBound}
	
	For fixed $\eta \in \Omega_{[1:L]^{2}}$, we have the inequalities
	\be 
	|\longnewanc{2}{\eta}| &\lesssim L^{3}\,, \label{IneqAncestryLongNewPathMainBound1}\\
	|\longnewanc{4}{\eta}| &\lesssim L^{4} \quad \textrm{and, }\label{IneqAncestryLongNewPathMainBound2}\\
        |\longnewanc{k}{\eta}| &= 0 \quad \textrm{for} \quad k > 4\,.\nonumber
	\ee
\end{lemma}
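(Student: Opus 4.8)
The plan is to reconstruct, from a given endpoint $\eta$ and a given jump size $k$, all possible starting configurations $\sigma$ with $\flownew{\sigma}(\gamma) = 1$ whose mid path terminates at $\eta$, and to count them. The first observation is that a full mid path $\gamma(\sigma)$ collapses at most two rectangles $R_1, R_2$, each of which removes (or possibly adds) either two or four defects at its corners; hence $|D(\init{\gamma})| - |D(\fin{\gamma})| \in \{2,4\}$ always (and is never $0$ since a mid path always does something non-trivial), which immediately gives the claim $|\longnewanc{k}{\eta}| = 0$ for $k > 4$. So it remains to handle $k = 2$ and $k = 4$.

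For $k = 2$: if $\gamma \in \longnewanc{2}{\eta}$ then exactly one of the two rectangles did something effective, say it removed a pair of defects along a row or column (a ``pair move''). Reversing the path from $\eta$, the rectangle $R$ involved has three of its four corners at defect sites, and — crucially — by Definition \ref{DefNewPath} the relevant pair $(x,y)$ is an $\ell_1$-minimizing pair of defects in $\sigma$, with the third corner being the \emph{nearest} vertically- or horizontally-aligned defect. The strategy is: the endpoint $\eta$ determines $D(\eta)$; a pair move that ends at $\eta$ is specified by saying which edge (row-segment or column-segment) the ``travelling pair'' of defects was sitting on just before annihilation, together with the anchoring defect it collides with. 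I would argue that, given $\eta$, the rectangle $R$ is determined up to $O(L)$ choices: one of its sides is an edge of the $\ell_1$-minimal pair structure (so has length $O(L/\sqrt{|D|})$, but for a crude bound just $\le L$), and the location of $R$ within the lattice costs another factor $L$; combined with the $O(L)$ choices internal to the base path $\gamma_{\sigma,R}$ of Definition \ref{DefRectRemovePath}, and using that $\sigma$ is recovered from $\init{\gamma}$ which is in turn determined by $\eta$ and the rectangle, one gets $|\longnewanc{2}{\eta}| \lesssim L^3$. For $k = 4$: now \emph{both} rectangles may be effective, or a single rectangle removes all four of its corner defects; each rectangle is specified by its bottom-left and top-right corner, i.e.\ $O(L^4)$ choices in total, and this dominates, giving $|\longnewanc{4}{\eta}| \lesssim L^4$. (Here one should double-check against the sharper congestion bound Lemma \ref{LemmaCongSmallRegRem}, which already knows a path is pinned down by three rectangles at $L^2$ choices each; but the point of the present lemma is that we are not counting \emph{all} $\sigma$ with $e \in \gamma(\sigma)$, only those for which $\gamma$ is an entire compatible path ending at $\eta$ with a prescribed defect-count drop, which is much more rigid, hence the better powers of $L$.)

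The main obstacle I expect is the $k=2$ case: squeezing the count down to $L^3$ rather than the naive $L^4$ requires genuinely using that the first (or only) effective rectangle is built from an $\ell_1$-\emph{minimal} pair of defects of $\sigma$, so that once $\eta$ and the ``direction of collapse'' are known, one of the two dimensions of the rectangle is not free but is forced by the minimality/nearest-neighbour conditions in Definition \ref{DefNewPath}. I would carry this out by: (i) fixing $\eta$; (ii) classifying a compatible path by whether $R_1$ or $R_2$ is the effective rectangle and by which pair of defects of $D(\init\gamma)$ are annihilated (these two defects lie at two corners of the rectangle and must be recoverable from $\eta$ together with the $\le 48$ plaquettes near the rectangle's corners); (iii) showing that specifying the effective rectangle requires only its corner on one diagonal ($L^2$ choices) plus one further linear choice for the opposite side ($L$ choices), because the remaining freedom is killed by the ``smallest rectangle in the family with corners $x$, $(y_1,x_2)$, $v(x,y)$'' clause; (iv) noting the base-path $\gamma_{\sigma,R}$ in Definition \ref{DefRectRemovePath} is deterministic given $(\sigma, R)$, so there is no additional branching and no extra factor of $L$ beyond what is above. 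Steps (i)--(iv) together with the trivial $k > 4$ argument complete the proof.

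Throughout, the inequalities are understood in the asymptotic sense of the paper's conventions, and I would not belabour the constants, since the statement only asks for the $\lesssim L^3$ and $\lesssim L^4$ bounds.
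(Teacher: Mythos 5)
Your high-level plan (reconstruct $\sigma$ from $\eta$ by parameterizing the collapsed rectangles, and kill $k>4$ by noting a mid path changes the defect count by at most $4$) matches the paper's, and the $k>4$ part is essentially fine. But the counting in the $k=2$ case has a genuine gap. When $x$ and $y$ lie in distinct rows \emph{and} distinct columns, both rectangles $R_1,R_2$ of Definition \ref{DefNewPath} are flipped --- this is not a single ``pair move'' with one effective rectangle --- and $D(\sigma)\,\Delta\, D(\eta)$ consists of six points: the four removed defects $x,y,v(x,y),h(x,y)$ and the two created defects $A,B$. Reconstructing $\sigma$ therefore requires recovering six coordinates $(x_1,x_2,y_1,y_2,v_2,h_1)$, and your accounting ``one diagonal corner ($L^2$ choices) plus one linear choice ($L$ choices)'' does not close. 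If the free corner is $x$, the minimality of $\|x-y\|_1$ only confines $y$ to an $\ell_1$-ball of $O(L^2/|D(\sigma)|)$ points, giving $L^2\cdot L^2/|D(\sigma)|$, which exceeds $L^3$ whenever $|D(\sigma)|\ll L$; and the ``smallest rectangle'' clause refers to nearest defects \emph{of $\sigma$}, which include the points $v,h$ that are invisible in $\eta$, so it does not by itself determine the remaining coordinates from $\eta$. The paper's proof works because it anchors the count at $A\in D(\eta)$ (a defect of the \emph{endpoint}, so $O(|D(\eta)|)$ choices), takes $y$ in the column of $A$ ($O(L)$ choices), and only then uses minimality to bound the choices of $x$ given $y$ by $O(L^{2}/|D(\eta)|)$; the factor $|D(\eta)|$ and the factor $L^{2}/|D(\eta)|$ cancel, yielding $L^{3}$ uniformly in $|D(\eta)|$. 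This cancellation is the heart of the lemma and is absent from your scheme.

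The $k=4$ case also needs an ingredient you skip: one must show that the two-rectangle (distinct row and column) configuration is \emph{impossible} when $k=4$ --- it would force a defect of $\sigma$ at $A$ or $B$, contradicting the $\ell_1$-minimality of the pair $(x,y)$ --- so that only the degenerate case where one of $\cB_-(R_1),\cB_-(R_2)$ is empty survives, and a single rectangle is then specified by its two opposite corners in $O(L^4)$ ways. Without that exclusion, ``both rectangles may be effective'' would cost far more than $L^4$. Finally, the phrase ``$O(L)$ choices internal to the base path'' contradicts your own (correct) observation in step (iv) that $\gamma_{\sigma,R}$ is deterministic given $(\sigma,R)$; no such factor exists, and its appearance in your $L^3$ tally is a sign that the bookkeeping has not been carried through consistently.
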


\begin{proof}
	
 Fix $\eta \in \config$. We bound $|\longnewanc{k}{\eta}|$ by  parameterizing the collection of configurations $\sigma$ such that $\g(\s) \in\longnewanc{k}{\eta}$. We first study the bound when $k=2$ and then $k=4$. We break both these into two cases, according to whether the elements $x,y$ in Definition \ref{DefNewPath} are both in distinct rows \textit{and} in distinct columns, or not. 

\textbf{Case 1 ($k = 2$): distinct row and distinct column.} In this case $|D(\sigma) \Delta D(\eta)| = 6$, since the rectangles $R_1$ and $R_2$ in Definition \ref{DefNewPath} share exactly one vertex, $\cB_-(R_1)$ and $\cB_-(R_2)$ are both non-empty, and after flipping the spins in $\cB_-(R_1)$ and $\cB_-(R_2)$ the defect configuration will have flipped at all six vertices of $R_{1},R_{2}$ that are not shared by both rectangles.
	We label the defects in $D(\sigma)$ as in Definition \ref{DefNewPath}, and label the remaining two elements of $D(\s)\Delta D(\h)$ by $A,B$ (see Figure \ref{FigRelPostCong}). 
	We consider pairs $\sigma,\eta$ with defects in the relative position illustrated in Figure \ref{FigRelPostCong} (by ``relative position," we mean pairs $\sigma,\eta$ for which $x$ is to the left of $y$, $v(x,y)$ is above $x$, and $h(x,y)$ is right of $y$).

		For fixed $\eta$, we will parameterize the collection of such configurations $\sigma$ in terms of a set with $O(L^{3})$ elements. Since $k=2$ we know $A,B \in D(\eta)$; Clearly there are $O(|D(\eta)|)$ choices for the position $A$. Next, we choose $y$; since $y$ must be in the same column as $A$, there are $O(L)$ choices once we have chosen $A$. 
	Next, we choose $x$; since $x,y$ were at minimal distance, by Inequality \eqref{IneqL1SquareSize} there are $O(\frac{L^{2}}{|D(\eta)|})$ choices for $x$ once we have chosen $A$ and $y$. 
	Next, we choose $B$; by our construction, $B$ must be the first defect in $\eta$ to the right of $x$ and $y$ in the same row as $x$, so there is one choice for $B$ once we have chosen $A,y$, and $x$. Finally, $h(x,y)$ and $v(x,y)$ are determined by $x,y,A$ and $B$. Multiplying out the choices we have made, we see that there were $O(L^{3})$ options.
	
	\textbf{Case 2 ($k = 2$): $x,y$ are in either the same row or the same column.}
	In this case either $\cB_-(R_1)$ or $\cB_-(R_2)$ is empty. Without loss of generality suppose $\cB_-(R_2)= \emptyset$, then $|D(\sigma) \Delta D(\eta)| = 4$ and the point $B$ and $h(x,y)$ are coincident, otherwise the proof follows exactly as above.
	
	Finally, to this point we have considered only relative positions of the form shown in Figure \ref{FigRelPostCong}. It is clear that there are only $O(1)$ possible relative positions on a collection of $6$ points, and including the remaining positions would change this final bound by only a constant factor. This completes the proof for $k=2$ in both cases. The proof for $k=4$ is similar, and we mimic the notation above.

\textbf{Case 1 ($k = 4$): distinct row and distinct column.} 
 We show by contradiction that this case does not occur. Assume for contradiction that $|D(\sigma) \Delta D(\eta)| = 6$. 
	In this case there must be a defect at either $A$ or $B$ in $\s$, but this contradicts the minimality of $\|x-y\|_1$ over defect positions $x,y$.

	\textbf{Case 2 ($k = 4$): $x,y$ are in either the same row or the same column.}
	As for $k=2$ Case 2, either $\cB_-(R_1)$ or $\cB_-(R_2)$ is empty.
	Therefore to reconstruct $\s$ we only need to specify one rectangle, and this can be done in at most $L^4$ ways.

	By the same argument as \textbf{Case 1 ($k = 4$)}, we have $\longnewanc{j}{\eta}$ is empty for each $j > 4$. For parity concerns, it is empty for $j$ odd. This completes the proof.
	
	
\end{proof}

We then define the complete measure on paths that will be analyzed in the remainder of this paper:

\begin{defn} [Full Path] \label{DefFinalPath}

Fix $M_{1} = \lceil \beta^{2} \rceil$, $M_{2} = \lfloor \beta L \rfloor + M_{1}$.   For fixed $\sigma \in \Omega_{[1:L]^{2}}$, we define a  measure $\flowfull{\sigma}$ on $\Gamma_{\sigma,+}$ by giving an explicit algorithm for sampling from this measure:

\begin{enumerate}
\item Initialize by setting $\gamma = \{\sigma\}$ and $i=1$.
\item While $D(\fin{\gamma}) \neq \emptyset$, iteratively sample subpaths $\gamma_{1},\gamma_{2},\ldots$ according to the following loop.
\begin{enumerate}
\item If $i < M_{1}$, sample $\gamma_{i} \sim \flowtot{\fin{\gamma}}$, according to Definition \ref{DefPartialPath}.
\item If $M_{1} \leq i < M_{2}$, sample $\gamma_{i} \sim \flownew{\fin{\gamma}}$, according to Definition \ref{DefNewPath}.
\item If $i = M_{2}$, sample $\gamma_{i} \sim \flowna{\fin{\gamma}}$, according to Definition \ref{DefNaivePath}.
\item In all of these cases, set $\gamma = \gamma \conc \gamma_{i}$ and then $i = i+1$.
\end{enumerate}
\item Once $\gamma$ satisfies $D(\fin{\gamma}) = \emptyset$, return the path $\gamma$.
\end{enumerate}

When bounding the mixing time, it is useful to consider truncated paths as follows. Fix a truncation level $0 \leq k \leq L^{2}$. We define the measure $\flowtrunc{\sigma}{k}$ on $\Gamma$ by the following algorithm:
\begin{enumerate}
\item Sample the path $(\sigma^{(0)},\ldots,\sigma^{(m)}) \sim \flowfull{\sigma}$.
\item Let $i_{\min} = \min \{i \, : \, |D(\sigma^{(i)})| < k\}$.
\item Return the path $\gamma = (\sigma^{(0)},\ldots,\sigma^{(i_{\min})})$.
\end{enumerate}
\end{defn}

Having defined the full paths lengths we can now bound the total path length.
The following lemma is an improvement on the analogous bound in \cite{ChlebounSpm2018}, which had paths of length $O(L^{2} \min(k,\b L))$.

\begin{lemma} [Path Length] \label{LemmaPathLengthBound}
	Fix $\sigma \in \Omega_{[1:L]^{2}}$ with $|D(\s)| = k$ , and let $\gamma \in \Gamma_{\sigma,+}$ satisfy $\flowfull{\sigma}(\gamma) > 0$. Then the path length is bounded by 
	\be
	|\gamma| \lesssim  L^{2} \max(\min(\sqrt{k},\frac{\beta L}{\sqrt{k}}), \beta^{2}).
	\ee
	
\end{lemma}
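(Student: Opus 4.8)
The plan is to track the total length of the full path in \Cref{DefFinalPath} by adding up the contributions of its three constituent phases, using the per-subpath bounds already established. Recall that $|D(\fin{\gamma})|$ is non-increasing along the path (each of $\flowtot{\cdot}$, $\flownew{\cdot}$, $\flowna{\cdot}$ only removes defects, and flipping a rectangle changes at most $48$ plaquette variables in the worst case), and that in phases (a) and (b) each subpath removes $\Theta(1)$ defects while in phase (c) a single naive subpath finishes the job. So if the initial configuration has $|D(\sigma)| = k$, the sequence of defect counts $k = k_0 \geq k_1 \geq k_2 \geq \cdots$ decreases by a bounded amount at each step of phases (a), (b), hence $k_i \gtrsim k - O(i)$ as long as $i \leq M_2$.

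First I would bound phase (a), i.e. the subpaths $\gamma_i \sim \flowtot{\fin{\gamma}}$ for $i < M_1 = \lceil \beta^2 \rceil$. Each such subpath is a single rectangle-removal path, which flips the spins in $\cB_-(R)$ for a rectangle $R \subset \cB(\Lambda)$, so it has length at most $L^2$; summing over the at most $M_1$ such subpaths gives a contribution $O(\beta^2 L^2)$. Next, phase (b): for $M_1 \leq i < M_2 = \lfloor \beta L \rfloor + M_1$, each $\gamma_i \sim \flownew{\fin{\gamma}}$, and \Cref{LemmaPathLength} gives $|\gamma_i| \leq 6 L^2 / \sqrt{|D(\fin{\gamma})|}$ at the moment $\gamma_i$ is sampled. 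Since at most $M_2 - M_1 \leq \beta L + 1$ subpaths are executed in this phase and each removes $O(1)$ defects, the defect count when the $j$-th mid-subpath (counting from $j=1$) is sampled is $\gtrsim \max(k - O(\beta^2) - O(j), 0)$; I would actually use the cruder bound that it is $\gtrsim k - O(\beta L) \gtrsim k$ when $k \gg \beta L$, and otherwise use that it stays $\geq$ (some suitable floor), combined with the fact that the phase simply stops once $D(\fin{\gamma}) = \emptyset$. In the regime where phase (b) runs its full length $\Theta(\beta L)$, each mid-subpath has length $\lesssim L^2/\sqrt{k'}$ with $k'$ ranging over values $\gtrsim k$ down to whatever remains; summing, the total phase-(b) length is $\lesssim \beta L \cdot L^2 / \sqrt{k}$ when $k \gtrsim \beta L$ and is handled by a direct telescoping/integral comparison $\sum_{j} L^2/\sqrt{k - cj} \lesssim L^2 \sqrt{k}$ when phase (b) exhausts all remaining defects, giving the $L^2 \sqrt{k}$ term. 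Finally, phase (c) contributes at most one naive path, which flips each $-1$ spin once and hence has length at most $L^2$.

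Combining: the phase-(a) cost is $O(\beta^2 L^2)$; the phase-(c) cost is $O(L^2)$; and the phase-(b) cost is $O(L^2 \min(\sqrt{k}, \frac{\beta L}{\sqrt k}))$ — the $\sqrt k$ branch when $k \lesssim \beta L$ so that phase (b) drains all defects, and the $\frac{\beta L}{\sqrt k}$ branch when $k \gtrsim \beta L$ so that phase (b) runs its full $\Theta(\beta L)$ iterations each of length $\lesssim L^2/\sqrt k$. Adding the three contributions and taking the dominant one yields
\be
|\gamma| \lesssim L^2 \max\!\left(\min\!\left(\sqrt k, \tfrac{\beta L}{\sqrt k}\right), \beta^2\right),
\ee
as claimed. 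The main obstacle is the bookkeeping in phase (b): one needs to argue carefully that the defect count does not drop too fast (so that $1/\sqrt{|D(\fin\gamma)|}$ stays controlled) and simultaneously that it does drop (so that the phase terminates), and then to decide for each value of $k$ relative to $\beta L$ which of the two behaviours of $\min(\sqrt k, \beta L/\sqrt k)$ is active; the telescoping sum $\sum_j L^2/\sqrt{k - cj} \lesssim L^2\sqrt k$ should be justified by comparison with $\int_0^k x^{-1/2}\,dx$. A minor point to check is that the number of defects removed per rectangle-removal subpath in phases (a) and (b) is indeed bounded (it is at most $8$ per rectangle, and at most two rectangles per mid-subpath), so the linear-in-$i$ decay estimate is valid.
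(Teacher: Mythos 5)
Your proposal is correct and follows essentially the same route as the paper: bound the $\leq\beta^2$ initial subpaths by $L^2$ each, bound the $d$'th mid-subpath by $6L^2/\sqrt{k-O(\beta^2)-O(d)}$ via Lemma \ref{LemmaPathLength} (using that each subpath changes the defect count by $O(1)$), compare the resulting sum with $\int x^{-1/2}\,dx$ to get $L^2\min(\sqrt{k},\beta L/\sqrt{k})$, and add $L^2$ for the final naive path. (Minor imprecision only: a rectangle flip alters the defect configuration at its four corners, so each subpath removes at most $4$ defects, not $8$ per rectangle --- but any $O(1)$ bound suffices for your argument.)
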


\begin{proof}

Fix a path $\gamma$ with $\flowfull{\sigma}(\gamma) > 0$. Inspecting Definition \ref{DefFinalPath}, we can write $\gamma = \gamma_{1} \conc \gamma_{2} \conc \ldots \conc \gamma_{m}$, where each sub-path $\gamma_{i}$ was built in part \textbf{(2a)}, \textbf{(2b)} or \textbf{(2c)} of the definition. We now bound the number and length of sub-paths from each category: 

There are $\leq \beta^{2}$ paths from \textbf{(2a)}, each clearly of length $\leq L^{2}$; there are $\leq \beta L$ paths from \textbf{(2b)}, the $d$'th of which is of length $\leq \frac{ 6L^{2}}{\sqrt{k-4d - 4\beta^{2}}}$ by Lemma \ref{LemmaPathLength} (since each subpath $\g_i$ removes at most $4$ defects for $i<M_2$); there is $\leq 1$ path from \textbf{(2c)}, and it is clearly of length $\leq L^{2}$. Putting these three bounds together, the total path length is bounded by
\be \label{IneqPathLengthMotivation}
|\g| & \lesssim \beta^{2} L^{2} +  \sum_{d=\max\{0, k-\beta L\}}^{k} \frac{ L^{2}}{\sqrt{d}} + L^{2} \lesssim L^{2} \max(\min(\sqrt{k},\frac{\beta L}{\sqrt{k}}), \beta^{2}).
\ee
\end{proof}


\subsection{Final Bounds on Canonical Paths} \label{SecFinalBoundCanonical}

In this section, we put together our main bounds on the canonical paths studied in this paper. The calculation is somewhat lengthy, so we remind the reader of some conventions that are used throughout:

\begin{enumerate}
\item Recall the constants $M_{1} = \lceil \beta^{2} \rceil $ and $M_{2} = \lfloor \beta L \rfloor + M_{1}$, the number of steps taken before switching ``types" of paths in Definition \ref{DefFinalPath}.
\item For an initial configuration $\s \in \config$, we define $\ell(\s) = \max(\min(\sqrt{|D(\s)|}, \frac{\beta L}{\sqrt{|D(\s)|}}), \beta^{2})$.
\item  Observe that $F$ in Definition \ref{DefFinalPath} is in fact a measure on \textit{decompositions} of paths of the form
  \be \label{EqMainLemmaShortPathRepInt}
    \gamma_{1} \conc \gamma_{2} \conc \ldots \conc \gamma_m\,,
  \ee
not just a measure on paths themselves. 
Recall that $\gamma_i$ are of the form of described in Definition \ref{DefPartialPath} for $i < M_1$,  of the form of described in Definition \ref{DefNewPath} for $M_1 \leq i < M_2$, and if $m = M_2$ then $\gamma_{M_2}$ is of the form described in Definition \ref{DefNaivePath}. We will sum over all possible decompositions of the relevant paths.

\item The measures  $\{\flowtot{\sigma}\}$ are as in Definition \ref{DefPartialPath}, the measures  $\{\flowfull{\sigma}\}$, $\{\flowtrunc{\sigma}{k}\}$ are as in Definition \ref{DefFinalPath}, the measures $\{ \flownew{\sigma} \}$ are as in Definition \ref{DefNewPath}, and the measures $\{ \flowna{\sigma} \}$ are as in Definition \ref{DefNaivePath}. We note that $\flowna{\sigma}$ and $\flownew{\s}$ assign full mass to a single path, which we denote $\gamma_{\sigma,\mathrm{na}}$ and $\g_{\s,\mathrm{mid}}$ respectively.
\item For $\s \in \config$ and $e = (e_{-},e_{+})$, we define the energy of a path started from $\s$ on the edge $e$ by $Q(\s,e) = \pi(\s) /\big( \pi(e_{-}) \cL(e_{-},e_{+}) \big)$ .
\end{enumerate}

We fix an edge $e \in \edges$ and calculate a bound on the sum that appears in Equation \eqref{IneqDefCanPathObject} for some $k \in [0\!:\!L^{2}]$. Although in principle we could take advantage of the fact that paths from $S_{k}$ that are truncated when they enter $S_{k}^{c}$ are shorter than paths which go all the way from $S_{k}$ to $\{+\}$, we will not do so. However, we \textit{will} use the fact that all elements of a path from $S_{k}$ to $S_{k}^{c}$ have $\Omega(k)$ defects.

Recalling $M_{1} \approx \beta^{2}$ and $M_{2} \approx \beta L$, we have:

\be \label{IneqCanPathStartingCalc}
\sum_{\sigma \in S_{k}} \sum_{\gamma \ni e} \flowtrunc{\sigma}{k}(\gamma) | \gamma|  Q(\s,e) &\stackrel{Lemma \,\, \ref{LemmaPathLengthBound}}{\lesssim} \sum_{\sigma \in S_{k}} \sum_{\gamma \ni e} \flowtrunc{\sigma}{k}(\gamma) L^{2} \ell(\s) Q(\sigma,e) \\
&\leq   \sum_{m = 1}^{M_{2}} \sum_{j=1}^{m} \sum_{\substack{\gamma = \gamma_{1} \conc \ldots \conc \gamma_{m} \\ \init{\gamma} \in S_{k},\, \gamma_{j} \ni e}} \flowtrunc{\init{\gamma}}{k}(\gamma) L^{2} \ell(\init{\g}) Q(\init{\gamma},e) \\
&=   \sum_{m = 1}^{M_{2}} \sum_{j=1}^{\min(m,M_{1}-1)} \sum_{\substack{\gamma = \gamma_{1} \conc \ldots \conc \gamma_{m}\\ \init{\gamma} \in S_{k}, \, \gamma_{j} \ni e}} \flowtrunc{\init{\gamma}}{k}(\gamma) L^{2} \ell(\init{\g}) Q(\init{\gamma},e)  \\
&\qquad+  \sum_{m = M_1}^{M_{2}} \sum_{j=M_{1}}^{\min(m,M_{2}-1)} \sum_{\substack{\gamma = \gamma_{1} \conc \ldots \conc \gamma_{m},\\ \init{\gamma} \in S_{k},  \, \gamma_{j} \ni e}} \flowtrunc{\init{\gamma}}{k}(\gamma) L^{2} \ell(\init{\g}) Q(\init{\gamma},e)  \\
&\qquad+  \sum_{\substack{\gamma = \gamma_{1} \conc \ldots \conc \gamma_{M_{2}} \\ \init{\gamma} \in S_{k}, \, \gamma_{M_2} \ni e}} \flowtrunc{\init{\gamma}}{k}(\gamma) L^{2} \ell(\init{\g}) Q(\init{\gamma},e) \\
&\equiv  L^{2} S_{\mathrm{init}} +   L^{2}  S_{\mathrm{new}} +  L^{2}  S_{\mathrm{naive}}.
\ee

To bound the first term, $ L^{2} S_{\mathrm{init}}$, we use a calculation from Section 5.6 of our previous paper \cite{ChlebounSpm2018}, with one small modification. Although the paths studied in \cite{ChlebounSpm2018} are not the same as those in this paper, the \textit{initial} components of the paths are the same. The analysis between (5.51)-(5.60)  in Section 5.6 of  \cite{ChlebounSpm2018} is only related to these initial components, and so can be re-used with only one change: the single reference to the path length bound  in Lemma 5.32 of \cite{ChlebounSpm2018} should be replaced by a reference to Lemma \ref{LemmaPathLengthBound} of this paper. Following (5.51)-(5.60) of \cite{ChlebounSpm2018} with this substitution gives: 
\be \label{QuotePreviousPathBounds2}
 L^{2}S_{\mathrm{init}}   \lesssim \beta^{6} e^{3.5 \beta} \max\Big\{\frac{1}{\sqrt{|D(e_{-})|}} \min\big(1, \frac{\beta L}{|D(e_{-})|}\big),\, \frac{\beta^{2}}{|D(e_-)|}\Big\}.
\ee

We now bound $S_{\mathrm{new}}$. The argument will be quite similar to Section 5.6 of \cite{ChlebounSpm2018}. As the subscripts in the following sums will become somewhat complicated, we introduce some short-term notation to reduce the visual clutter. 
\be
\Gamma^{(i)}(m) &= \{ \gamma_{1} \conc \ldots \conc \gamma_{m} \in \Gamma \, : \, \, \gamma_{i} \ni e, \, \init{\gamma_{1}} \in S_{k} \}\,, \\
\Gamma(m) &= \{ \gamma_{1} \conc \ldots \conc \gamma_{m} \in \Gamma \, : \, \, \gamma_{m} \ni e, \, \init{\gamma_{1}} \in S_{k} \}\,. 
\ee
As explained in the introduction to this subsection, we view the elements of \textit{e.g.} $\Gamma(m)$ as \textit{paths with a specified decomposition of the form \eqref{EqMainLemmaShortPathRepInt}}, not just paths. Thus, a ``single path" may appear more than once in \textit{e.g.} $\Gamma(m)$ if it has several decompositions of the form \eqref{EqMainLemmaShortPathRepInt}. Also, we have omitted the dependence on many variables (e.g. the edge $e$ and starting level $k$) that are constant throughout the following calculation.

We then compute:
\begin{align}
\label{eq:Snewfirst}
S_{\mathrm{new}} &=    \sum_{m = M_1}^{M_{2}} \sum_{i=M_{1}}^{\min(m,M_{2}-1)} \sum_{\substack{\gamma \in\Gamma^{(i)}(m)}} \flowtrunc{\init{\gamma}}{k}(\gamma)\ell(\init{\g}) Q(\init{\gamma},e) \nonumber\\
&= \sum_{m = M_{1}}^{M_{2}-1}  \sum_{\gamma \in \Gamma(m)}  \flowtrunc{\init{\gamma}}{k}(\gamma)\ell(\init{\g}) Q(\init{\gamma},e) \nonumber \\
&= \sum_{m = M_{1}}^{M_{2}-1}  \underbrace{\sum_{\gamma \in \Gamma(m)} \left( \prod_{j=1}^{M_{1}-1} \flowtot{\init{\gamma_{j}}}(\gamma_{j}) \right) \left(\prod_{j=M_{1}}^{m} \flownew{\init{\gamma_{j}}}(\gamma_{j}) \right) \ell(\init{\g}) Q(\init{\gamma},e)}_{\equiv \cS(m,e)}\,, 
\end{align}
where in the second line we took the marginal distribution of the first $i$ segments of the path $\gamma$ that is being summed over (we can ``integrate out" the remaining segments from index $i+1$ to $m$, as they are not related to the other terms in the sum).

We now ``factor" the paths in $\Gamma(m)$ into pieces corresponding to a ``long initial" section composed of $M_1-1$ partial random paths (see Def. \ref{DefPartialPath}), a ``long middle" section composed of $m-M_1-1$ mid paths (see Def. \ref{DefNewPath}), and a ``final segment" corresponding to a mid path that contains the edge $e$.  We associate three sets to this path decomposition as follows, as illustrated in Figure \ref{FigMidPathAnalysisDecomp}.
\begin{figure}[ht]
\centering
\includegraphics[width=0.3\textwidth]{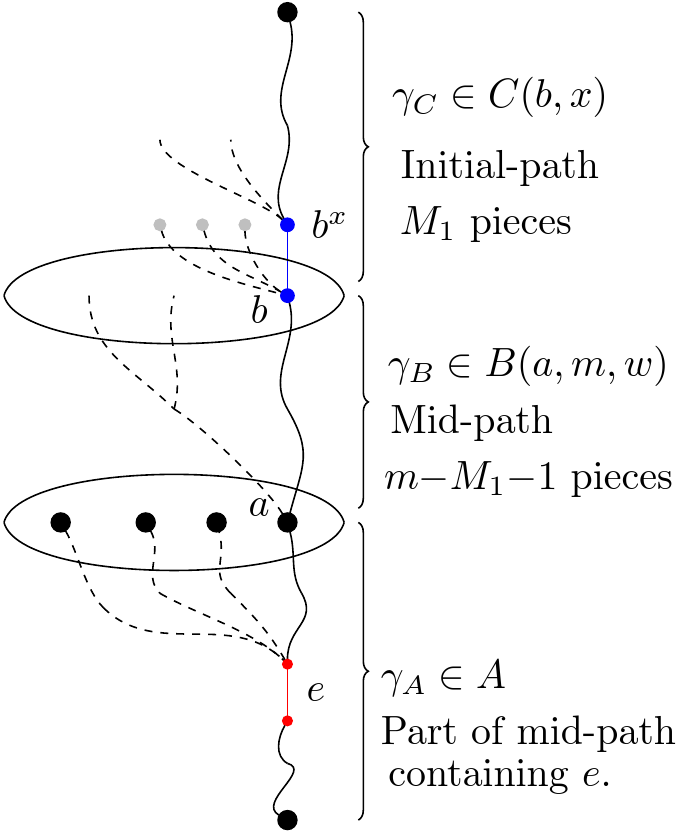}\qquad \qquad \qquad \includegraphics[width=0.3\textwidth]{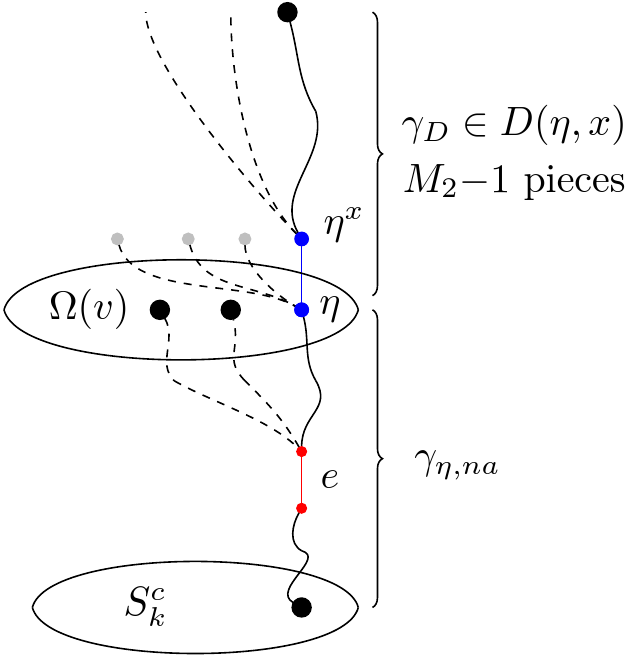}
\caption{ A cartoon of the path decomposition used. Left: for $S_{\textrm{new}}$, when the edge $e$ is contained in a a mid-path. Right: for $S_{\textrm{naive}}$, when the edge $e$ is contained in the final naive part of the path.\label{FigMidPathAnalysisDecomp}}
\end{figure}

In the following definitions, we always think of  $2w$ as the number of ``excess" defects removed in going from the end of the initial random path's to $\init{\g_m}$ beyond the minimal number $2(m-M_1)$. 
\be
A &= \{ \gamma \, : \, \gamma \ni e, \, \flownew{\init{\gamma}}(\gamma) = 1 \}, \\
B(a,m,w) &= \{ \gamma_{B,1} \conc \ldots \conc \gamma_{B,m-M_{1}} \, : \, \fin{\gamma_{B,m-M_{1}}} = a, \, \forall \, j \in [1:m-M_{1}-1] \ \flownew{\fin{\gamma_{j,B}}}(\gamma_{j+1,B}) = 1, \\
& \qquad \quad\, |D(\init{\gamma_{1,B}})| = |D(a)| + 2 (m-M_{1}+w) \}, \\
C(b) &= \{ \gamma_{C,1} \conc \ldots \conc \gamma_{C,M_{1}-1} \, : \, \init{\g_{C,1}} \in S_k,  \,\fin{\gamma_{C,M_{1}-1}} = b, \, \forall \, j \in [1:M_{1}-2] \ \flowtyp{\fin{\gamma_{C,j}}}(\gamma_{C,j+1}) >0 \}, \\
C(b,x) &= \{ \gamma_{C,1} \conc \ldots \conc \gamma_{C,M_1-1} \, : \, \init{\g_{C,1}} \in S_k,  \, \gamma_{C,M_{1}-1} \ni (b^x,b),\ \forall \, j \in [1:M_{1}-2] \ \flowtyp{\fin{\gamma_{C,j}}}(\gamma_{C,j+1}) >0 \},\\
h(m) &= \min(\sqrt{|D(e_-)|}+2\sqrt{m},\frac{\beta L}{\sqrt{|D(e_-)|}})+\b^2\,.
\ee
We note that $C(b) \subset \cup_{x \in \L} C(b,x)$ for all $b \in \config$.
We now decompose \eqref{eq:Snewfirst} and apply the bound $\ell(\init{\g}) \leq h(m)$, which follows from the bounds $|D(e_-)| \leq |D(\init{\g})|$ and $|D(e_-)| +4m \geq |D(\init{\g})|$.
We then write:
\be
S_{\mathrm{new}} &\leq \sum_{m=M_{1}}^{M_{2}-1} \sum_{\gamma_{A} \in A} \sum_{w=0}^{m-M_{1}} \sum_{\gamma_{B} \in B(\init{\gamma_{A}},m,w)}  \left(\prod_{j=1}^{m-M_{1}} \flownew{\init{\gamma_{B,j}}}(\gamma_{B,j}) \right) \cdot \\ &\qquad\cdot \sum_{\gamma_{C}  \in C(\init{\gamma_{B,1}})} \left( \prod_{j=1}^{M_{1}-1} \flowtot{\init{\gamma_{C,j}}}(\gamma_{C,j}) \right) h(m) Q(\init{\gamma_{C}},e) \\
&\leq  \sum_{m=M_{1}}^{M_{2}-1} \sum_{\gamma_{A} \in A} \sum_{w=0}^{m-M_{1}} \sum_{\gamma_{B} \in B(\init{\gamma_{A}},m,w)}  \left(\prod_{j=1}^{m-M_{1}} \flownew{\init{\gamma_{B,j}}}(\gamma_{B,j}) \right) \cdot \\ &\qquad\cdot \sum_{x \in \L} \sum_{\gamma_{C}  \in C(\init{\gamma_{B,1}},x)} \left( \prod_{j=1}^{M_{1}-1} \flowtot{\init{\gamma_{C,j}}}(\gamma_{C,j}) \right) h(m) Q(\init{\g_C},e)\,.
\ee
Let $f_x^{\g_B} =\big((\init{\g_{B,1}})^x,\init{\g_{B,1}}\big) \in \edges$, then
\begin{align*}
  \frac{Q(\init{\g_C},e)}{Q(\init{\g_{C}},f_x^{\g_B})} =\frac{\pi\big((\init{\g_{B,1}})^x\big)\cL(f_x^{\g_B})}{\p(e_-)\cL(e)}  \leq Q(\init{\gamma_{B,1}},e) \,,
\end{align*}
where in the inequality we used the fact  $\pi\big((\init{\g_{B,1}})^x\big)\cL\big((\init{\g_{B,1}})^x,\init{\g_{B,1}}\big) \leq \pi\big(\init{\g_{B,1}}\big)$; this is immediate from the facts that the process is reversible and all rates are bounded by $1$.
Substituting this into the previous inequality and continuing,

\begin{align}
 \label{IneqSNewFactorSInit}
S_{\mathrm{new}} & \lesssim \sum_{m=M_{1}}^{M_{2}-1} \sum_{\gamma_{A} \in A} \sum_{w=0}^{m-M_{1}} \sum_{\gamma_{B} \in B(\init{\gamma_{A}},m,w)}  \left(\prod_{j=1}^{m-M_{1}} \flownew{\init{\gamma_{j,B}}}(\gamma_{B,j}) \right)Q(\init{\gamma_{B,1}},e) \cdot \\* &\qquad\cdot \sum_{x \in \L} \underbrace{  \sum_{\gamma_{C}  \in C(\init{\gamma_{1,B}},x)} \left( \prod_{j=1}^{M_{1}-1} \flowtot{\init{\gamma_{C,j}}}(\gamma_{C,j}) \right) h(m) Q(\init{\g_{C}},f_x^{\g_B})}_{S'(M_{1}-1,f_x^{\g_B})}\,.
\end{align}
We now recognize that the underlined term, $S'(M_{1}-1,f_x^{\g_B})$, is nearly identical to the term $S(M_{1}-1,f_x^{\g_B})$ defined in Equation (5.52) of \cite{ChlebounSpm2018}, except we have to replace the term $\min(|D(\init{\g_C})|,\b L)$ in the previous paper with $h(m)$ here.
We apply Inequality (5.59) of \cite{ChlebounSpm2018}, together with the fact $|D\big((\init{\g_{B,1}})^x\big)| \geq 1$,  to find:
\be
S'(M_{1}-1,f_x^{\g_B}) \lesssim \beta^{5} e^{2.5 \beta} \beta^{-M_{1}-2} h(m).
\ee

Applying this bound and the trivial inequality $h(m) \leq \beta L$, we have:
\begin{align} 
\label{QuotePreviousPathBoundsLongNew}
S_{\mathrm{new}} &\lesssim \beta^{6} e^{4 \beta} \beta^{-M_{1}-2}  \sum_{m=M_{1}}^{M_{2}-1} \sum_{\gamma_{A} \in A} \sum_{w=0}^{m-M_{1}}  \sum_{\gamma_{B} \in B(\init{\gamma_{A}},m,w)}  \left(\prod_{j=1}^{m-M_1} \flownew{\init{\gamma_{B,j}}}(\gamma_{B,j}) \right) Q(\init{\gamma_{B,1}},e)\nonumber\\
&\stackrel{\textrm{Lemma  \ref{LemmaAncestryLongNewPathBound}}}{\lesssim} \beta^{6} e^{4 \beta} \beta^{-M_{1}-2}  \sum_{m=M_{1}}^{M_{2}-1} \sum_{\gamma_{A} \in A} \sum_{w=0}^{m-M_{1}}  \binom{m - M_{1}}{w} L^{4w} L^{3(m-M_{1}-w)} e^{-2( m-M_{1} + w)\beta} Q(\init{\gamma_{A}},e)\nonumber\\
&\stackrel{m \lesssim \beta L}{\lesssim} \beta^{6.5} e^{4 \beta} \beta^{-M_{1}-2}  \sum_{m=M_{1}}^{M_{2}-1} \sum_{\gamma_{A} \in A} e^{-0.5(m-M_{1})\b} Q(\init{\gamma_{A}},e) \lesssim \beta^{5} e^{4 \beta} \beta^{-M_{1}-2}  \sum_{\gamma_{A} \in A}  Q(\init{\gamma_{A}},e) \nonumber\\
&\stackrel{\textrm{Lemmas \ref{LemmaNRGSmallRegRem},  \ref{LemmaCongSmallRegRem}}}{\lesssim} \beta^{6.5} e^{62 \beta} \beta^{-M_{1}-2}  \lesssim e^{-\beta^{1.5}},
\end{align}
where the last line is an extremely lose bound using the fact that $M_{1} \gtrsim \beta^{2}$.  We note that this term is negligible compared to $S_{init}$.

We now bound the remaining term, $S_{\textrm{naive}}$. Let 
\begin{align*}
\O(v) &= \{\h \in \config \,:\, |D(\h)| = |D(e_-)| + 2v,\ \gamma_{\h,\textrm{na}} \ni e\}\,\\
D(\h,x) &= \{ \g_1\conc\ldots \conc \g_{M_2-1}\in \G\,:\, \init{\g_1}\in S_k,\ \g_{M_2-1} \ni (\h^x,\h),\ \flowtrunc{\init{\g_1}}{k}(\g_2\conc \ldots \conc \g_{M_2 -1}\conc \g_{\h,\textrm{na}}) > 0\}\,.
\end{align*}
By a similar decomposition as above 
\begin{align*}
S_{naive} &= \sum_{\substack{\gamma = \gamma_{1} \conc \ldots \conc \gamma_{M_{2}} \\ \init{\gamma} \in S_{k}, \, \gamma_{M_2} \ni e}} \flowtrunc{\init{\gamma}}{k}(\gamma) \ell(\init{\g})Q(\init{\gamma},e) \\
&\leq \sum_{2v \geq -(L+1)}\sum_{\h \in \O(v)} Q(\h,e)  \sum_{x\in\L}\underbrace{\sum_{\g_D \in D(\h,x)} \flowtrunc{\init{\gamma_D}}{k}(\gamma_D) \ell(\init{\g_D})Q(\init{\gamma_D},(\h^x,\h))}_{ = \cS(M_2-1,(\h^x,\h)) }\,,
\end{align*}
where we can identifying the under-braced term with $\cS(M_2-1,(\h^x,\h))$ defined by the under-braced term in Eq. \eqref{eq:Snewfirst}.
It then follows from the analysis above that

\begin{align*}
S_{naive} &\lesssim  \beta^{6.5} e^{62 \beta} \beta^{-M_{1}-2}  e^{-0.5(M_2-M_{1}-1)\b} L^2\sum_{2v \geq -(L+1)}\sum_{\h \in \O(v)} Q(\h,e)\,\\
 &\lesssim \beta^{-M_1 + 5.5} e^{-40 L \beta} \underbrace{ \sum_{2v \geq -(L+1)}\sum_{\h \in \O(v)} e^{-2v\b} }\ .
\end{align*}
The underbraced term is bounded above in Ineq. (5.63) of \cite{ChlebounSpm2018}. Applying this bound,
\begin{align}
\label{QuotePreviousPathBoundsLongNaive}
S_{naive} &\lesssim \beta^{-M_1 + 5.5}2^{2L} e^{-30 L \beta}\,,
\end{align}
which is extremely small compared to our bound on $S_{\mathrm{init}}$ for $\beta$ sufficiently large.

Summarizing the calculations in this section, we have by Inequalities \eqref{IneqCanPathStartingCalc}, \eqref{QuotePreviousPathBounds2}, \eqref{QuotePreviousPathBoundsLongNew} and \eqref{QuotePreviousPathBoundsLongNaive},
\be \label{IneqMainCanonicalPathConclusion}
\sum_{\sigma \in \config} & \sum_{\gamma \ni e} \flowfull{\sigma}(\gamma) | \gamma| \frac{\pi(\sigma)}{\pi(e_{-}) \cL(e_{-},e_{+})} \lesssim L^{2}(S_{\mathrm{init}} + S_{\mathrm{new}} +  S_{\mathrm{naive}}) \\
&\lesssim \beta^{6} e^{3.5 \beta} \max\Big\{\frac{1}{\sqrt{|D(e_{-})|}} \min\big(1, \frac{\beta L}{|D(e_{-})|}\big),\, \frac{\beta^{2}}{|D(e_-)|}\Big\}.
\ee
for any edge $e$.

\section{Analysis of All-Plus Boundary Condition: Proof of Theorem \ref{ThmMainResPlus}}  \label{SecAllPlusRes}

For $r  >0 $, and $k(r)$ as defined in Equation \eqref{DefHighDensityRound}, and $\mathcal{A}(k(r))$ defined as in Equation \eqref{IneqDefCanPathObject} for the paths analyzed in this section, Inequality \eqref{IneqMainCanonicalPathConclusion} gives:  
\be \label{IneqMainSpecProfConclusion}
\mathcal{A}(k(r)) \lesssim\begin{cases}
	\beta^{8} \, e^{3.5 \beta}, & r \geq e^{-\b^{5}}\,, \\
	\beta^{6.5} \, e^{3.5 \beta} \frac{1}{\sqrt{\log(1/r)}}, & e^{- \beta L } < r < e^{-\b^{5}}, \\
	\beta^{8.5} \, e^{4 \beta} \frac{1}{(\log(1/r))^{1.5}}, & 0 < r < e^{-  \beta L },
\end{cases}
\ee
where we recall from Lemma \ref{lem:dom} that $\pi(+) \approx 1$. Applying this bound with Theorem \ref{ThmMainSpectralProfileBound}, we conclude 
\be
T_{\rm mix}^+( L_c ) &\stackrel{\eqref{IneqMainSpectralProfileBound}}{\leq} \int_{4 \, \min_{\sigma} \pi(\sigma)}^{16} \frac{2 \mathcal{A}(k(x))}{x} dx \\
&\stackrel{\eqref{IneqMainSpecProfConclusion} }{\lesssim}  \beta^{8.5} e^{4 \beta} \int_{4  e^{-L^{2} \beta}}^{e^{-L \beta}} \frac{1}{x \log(\frac{1}{x})^{1.5}} dx +  \beta^{6.5} e^{3.5 \beta} \int_{e^{-L \beta}}^{e^{-\b^{5}}} \frac{1}{x \sqrt{\log(\frac{1}{x})}} dx+\beta^{8} e^{3.5 \beta} \int_{e^{-\beta^{5}}}^{16} \frac{1}{x} dx  \\
&\lesssim \beta^{8} e^{3.75\b},
\ee
completing the proof of the upper bound in Inequality \eqref{IneqPlusMix}.


\section{Analysis of Periodic Boundary Condition: Proof of Theorem \ref{ThmMainResPer}} \label{SecPerBoundRes}

We prove Theorem \ref{ThmMainResPer}. Throughout this section, we often reserve subscripts for a time index and use the ``bracket" notation $x[i] \equiv x_{i}$ to indicate an element of a vector $x = (x_{1},\ldots,x_{k})$. 
In this section, we have many explicit probabilistic calculations related to the Markov processes $\{X_{t}\}_{t \geq 0}$ of the spin dynamics running according to the generator \eqref{eq:gen}, with periodic boundary conditions, and with starting points $X_{0} =\s$. 
When we wish to emphasize the starting point of a process in such a calculation, we use subscripts, as in \textit{e.g.} $\P_{\s}[X_{t} \in S]$, $\E_{\s}[f(X_{t})]$.

\subsection{Initial Notation}

We begin by giving the heuristic that guides our proof of this inequality.  Throughout this section, we denote by $\{ \hat{X}_{t}\}_{t \geq 0}$  the usual \textit{trace} of the process $\{X_{t}\}_{t \geq 0}$ on the set $\cG$ of ground states, and denote by $Q_{\cG}$ the generator of this process (see \textit{e.g.} Section 2.2 of \cite{landim2018metastable} for the definition of the trace and some of its basic properties). For the coupled pair of processes $\{X_{t}\}_{t \geq 0}$, $\{\hat{X}_{t}\}_{t \geq 0}$ and $T \geq 0$, denote by $s_{\cG}(T)$ the Lebesgue measure of the set of times $\{0 \leq t \leq T \, : \, X_{t} \in \cG \}$ up to $T$ in which $\{X_{t}\}_{t \geq 0}$ is within $\cG$. Note that $\hat{X}_{s_{\cG}(t)} = X_{t}$ for almost all values of $t$ for which $X_{t} \in \cG$. Finally, denote by $P_{\cG}^{t}$ the law of the Markov chain associated with $Q_{\cG}$ at time $t$.

Roughly speaking, we expect the mixing time of the dynamics of the spin model $\{X_{t}\}_{t \geq 0}$ to be determined by the dynamics of this trace process. Furthermore, the trace behaves very much like a simple random walk on the hypercube $\{-1,+1\}^{2L-1}$. This can be made precise in the following way: there is a natural bijection $w \, : \, \cG \mapsto \{-1,+1\}^{2L-1}$  (see Definition \ref{DefBijectionPeriodic} below) under which the dynamics of the trace walk $\{w(\hat{X}_{t}) \}_{t \geq 0}$ are a very small perturbation of the usual simple random walk on the hypercube $\{-1,+1\}^{2L-1}$ with generating set $\{(-1,1,1,\ldots,1),(1,-1,1,\ldots,1),\ldots,(1,1,1,\ldots,-1),(-1,-1,-1,\ldots,-1)\}$. It will turn out that this perturbation is small enough that $\{w(\hat{X}_{t}) \}_{t \geq 0}$ inherits cutoff from the random walk on the hypercube.

We now make this heuristic precise. We begin by recalling from \cite{ChlebounSpm2018} the main bijection between the set of ground states $\cG$ and the hypercube:

\begin{defn} [Bijection] \label{DefBijectionPeriodic}
 We define a bijection $w \, : \, \cG \mapsto \{-1,+1\}^{2L-1}$ between the collection of ground states and the $(2L-1)$-hypercube by the following formula for its inverse:
\be \label{EqCristinaBijection}
w^{-1}(v) [i,j] &= v[i] v[L+j], \qquad i \in [1:L], \, j \in [1:(L-1)]\,, \\
w^{-1}(v)[i,L] &= v[i], \qquad \qquad \quad \,  i \in [1:L].
\ee

For $v \in \{ -1, 1\}^{2L-1}$, define $|v| = \sum_{i=1}^{2L-1} \1_{v[i] = 1}$ to be the Hamming weight of $v$. We also denote by $\prec$ some fixed total order on $\{-1,+1\}^{2L-1}$ that extends the usual Hamming partial order.

\end{defn}

Our goal is to prove that cutoff occurs at time $\approx \beta e^{4 \beta}$. The main idea is to note that a chain $\{X_{t}\}_{t \geq 0}$  started at   $X_{0} \in \cG$ is unlikely to see any configurations with more than 4 defects over the relevant time-scale. Furthermore, \textit{most} configurations with 4 defects are quite unlikely: we typically only see configurations that are along  \textit{minimal-energy}, \textit{minimal-length} paths between elements of $\cG$.

We give a formal description of these configurations here. 
Some readers may prefer to skip ahead to Remark \ref{RemMinEnPath} and Figure \ref{FigDominantTransitionsBetweenGroundStates}, which  give essentially-complete descriptions with much lighter notation.

\begin{defn}[Minimal-Energy Paths Between Ground States] \label{DefMinEnMinLPaths}

For  $m \in [1:L]$, $k \in [1:L]$, and $\sigma, \eta \in \cG$ with $w(\sigma)$, $w(\eta)$ differing at the single index $\ell \in [1\!:\!L]$ and with $w(\sigma) \prec w(\eta)$, define the configuration $R(\sigma,\eta,m,k)$ by
\be
R(\sigma, \eta,m,k)[\ell,j] &= \eta[\ell,j], \qquad j \in [m:(m+k-1)] \\
R(\sigma, \eta, m,k) [i,j] &= \sigma[i,j], \qquad \text{all other entries},
\ee
where in this case the ``interval" $[m\! :\! (m+k-1)]$ is defined modulo $L$ (so that, if $L = 9$, we have $[7\!:\!2] = \{7,8,9,1,2\}$).

Similarly, for $\sigma,\eta$ with $w(\sigma)$, $w(\eta)$ differing at the single index $\ell \in [(L+1):(2L-1)]$ and with $w(\sigma) \prec w(\eta)$, define the configuration $R(\sigma,\eta,k,m)$ by
\be
R(\sigma, \eta,m,k)[i,\ell] &= \eta[i,\ell], \qquad i \in [m:(m+k-1)] \\
R(\sigma, \eta, m,k) [i,j] &= \sigma[i,j], \qquad \text{all other entries.}
\ee
Finally, for $\sigma, \eta$ with $w(\sigma) = - w(\eta)$ and $w(\sigma) \prec w(\eta)$, define the configuration $R(\sigma, \eta, m,k)$ by
\be
R(\sigma, \eta,m,k)[i,L] &= \eta[i,L], \qquad i \in [m:(m+k-1)] \\
R(\sigma, \eta, m,k) [i,j] &= \sigma[i,j], \qquad \text{all other entries.}
\ee
For convenience,  when $w(\s) \prec w(\h)$ we let $R(\s,\h,m,0)=\s$, and when $w(\eta) \prec w(\sigma)$ we define $R(\sigma,\eta,m,k) \equiv \emptyset$ for each $k\in [0:L]$.

We note that, if you restrict the range of $k$ to the set $[2:L-2]$, then $R$ is an injective map\footnote{Note that obtaining this injectivity is the reason we insist that $w(\sigma) \prec w(\eta)$.} - that is, if $R(\sigma, \eta,m,k) = R(\sigma', \eta', m',k')$ for $k, k' \in [2:L-2]$, then $\sigma = \sigma'$, $\eta = \eta'$, $m=m'$ and $k=k'$. For $k \in [1:L]$, let
\be \label{DefMinimalPaths}
\mathcal{R}_{k} = \{ \zeta \in \Omega_{[1:L]^{2}} \, : \, \exists  \, \sigma, \, \eta, \, m \, \, \text{ s.t. } \, \, \zeta = R(\sigma,\eta,m,k) \},
\ee
so that $\mathcal{R} \equiv \cup_{k=1}^{L} \mathcal{R}_{k}$ is the collection of points  along these minimal-length and -energy paths between ground states (and $\mathcal{R}_{0} = \mathcal{R}_{L} = \cG$). Note that, for distinct $k, k' \in [2:(L-2)]$, $\mathcal{R}_{k} \cap \mathcal{R}_{k'} = \emptyset$. Thus, for $\zeta \in \cup_{k=2}^{L-2}\mathcal{R}_{k}$, let $\sigma(\zeta), \eta(\zeta), m(\zeta), k(\zeta)$ be the unique elements that satisfy
\be \label{EqDefZetaBackFunc}
\zeta = R(\sigma(\zeta),\eta(\zeta),m(\zeta),k(\zeta)).
\ee
When $\zeta \in \mathcal{R}_{1} \cup \mathcal{R}_{L-1}$, there may be several choices that satisfy Eq. \eqref{EqDefZetaBackFunc}, since these configurations have a single spin flipped relative to an element of the ground state; this single spin might be the first or last spin to be flipped in a path that is flipping a row or column.  Denote by $\phi(\zeta)$ the number of these choices, and denote the choices themselves by $(\sigma_{1}(\zeta), \ldots, k_{1}(\zeta)), \ldots , ( \sigma_{\phi(\zeta)}(\zeta),\ldots, k_{\phi(\zeta)}(\zeta))$, in any arbitrary fixed order.

Finally, let $\mathcal{R}_{\mathrm{init}} = \mathcal{R}_{1} \cup \mathcal{R}_{L-1}$ be the set of configurations which are adjacent to a ground state (a single spin-flip from a ground state), and $\mathcal{R}' = \mathcal{R} \backslash (\cG \cup \mathcal{R}_{\mathrm{init}})$.

\end{defn}

\begin{remark} \label{RemMinEnPath}
For fixed $m \in L$, the sequence $\{ R(\sigma,\eta,m,k) \}_{k=0}^{L}$ gives one of many minimal-length paths from $\sigma$ to $\eta$: that is, $R(\sigma,\eta,m,0) = \sigma$, $R(\sigma,\eta,m,L) = \eta$, and $R(\sigma,\eta,m,k)$ is obtained by flipping $k$ adjacent spins of the column on which $\sigma, \eta$ disagree. 
Both $R(\sigma,\eta,m,1)$ and $R(\sigma,\eta,m,L-1)$ belong to $\cR_{\mathrm{init}}$ and contain four defects at the vertices of a unit square.
Roughly speaking: $m$ identifies where in the column we start ``flipping" from $\sigma$ to $\eta$, and $k$ tells us ``how far" we are in the path from $\sigma$ to $\eta$. See Figure \ref{FigDominantTransitionsBetweenGroundStates} for a sample pair of ground states and an element along the path between them. 
\begin{figure}[h]
\includegraphics[scale = 0.7]{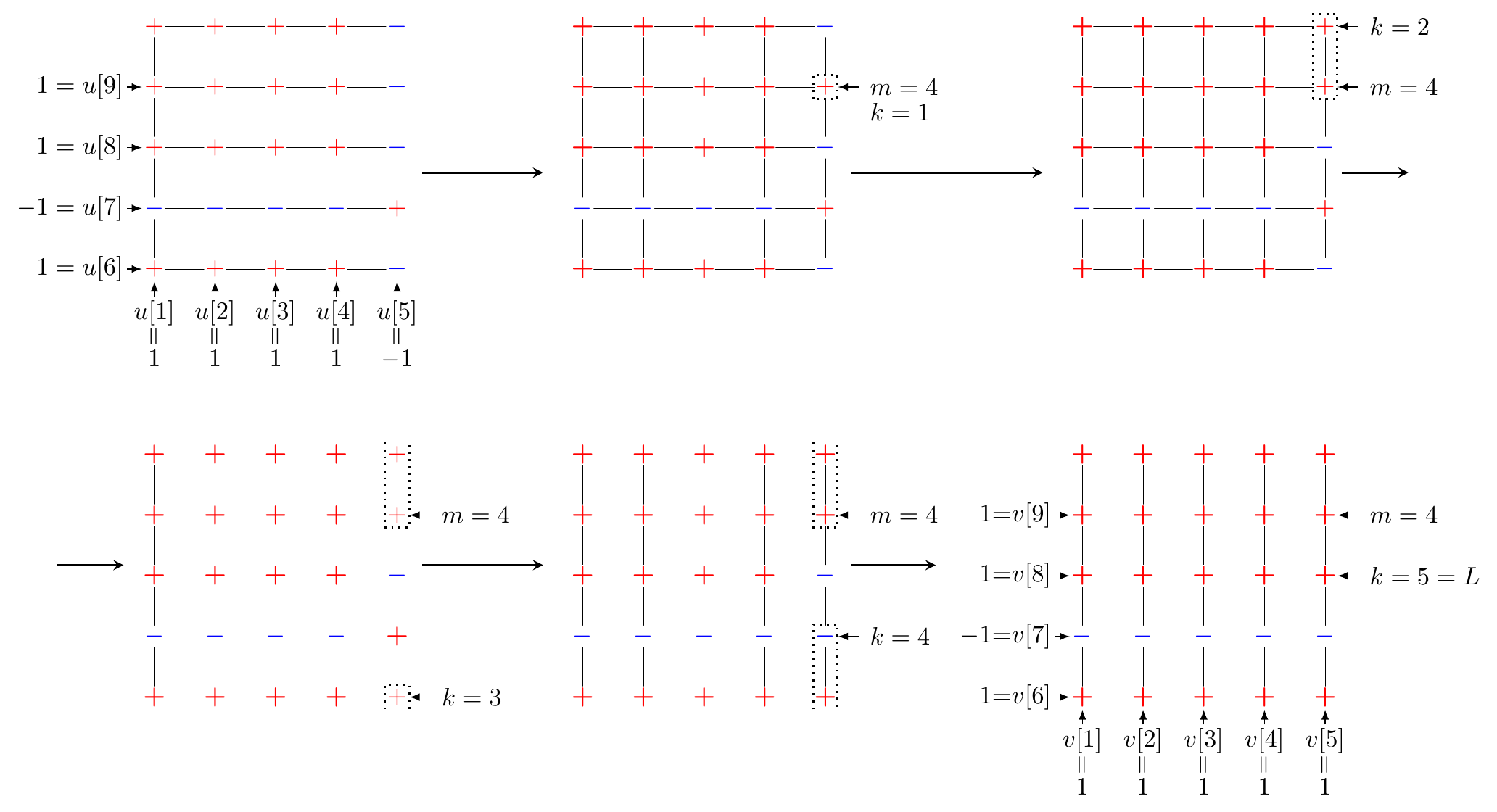}
\caption{\label{FigDominantTransitionsBetweenGroundStates} A minimal-length path $\{R(\s,\h,m,k)\}_{k=0}^{L}$ between two ground states $\s = w^{-1}(u)$ and $\h=w^{-1}(v)$, where $u$ and $v$ are given in the first and final frames.}
\end{figure}
\end{remark}

\subsection{Coupling to Simple Random Walks}
\label{RemSRWCouplingExplicit}

We will see that the process does not leave $\cR$ on the timescale of interest (see Corollary \ref{cor:noescape}). For this reason, it is sufficient to consider the dynamics restricted to $\cR$. 
It turns out that on $\cR'$ we can couple the process with a sequence of independent random walks, and we use this to build a description of the process on $\cR$ over the following sections.

\subsubsection{Coupling Construction} \label{SubSubSecInitCoupCon}

We begin with an informal description. Recall that $\cR$ is decomposed as the disjoint union $\cR = \cG \sqcup \cR_{\mathrm{init}} \sqcup \cR'$. The elements of $\cG$ are isolated from each other, and the only possible transitions from $\cG$ end in $\cR_{\mathrm{init}}$; furthermore, it is easy to compute the precise rate of these $\cG \rightarrow \cR_{\mathrm{init}}$ transitions (see below). Next, note that the elements of $\cR_{\mathrm{init}}$ are also isolated, and the only transitions of interest are to $\cG$ or $\cR'$ (transitions to $\cR^{c}$ are possible, but occur at a negligible rate over the relevant timescale). The elements of $\cR'$ are not isolated, but the dynamics on each ``connected component" can be easily described in terms of simple random walk on $[1:(L-1)]$. In particular, every  configuration $\zeta \in  \mathcal{R}'$ consists of two pairs of adjacent defects (initially distance $2$ when entering from $\cR_{\mathrm{init}}$). Over time, the signed distance between these particles \textit{exactly} undergoes a random walk on $[1,L-1]$ until the first time the signed distance is either $1$ or $L-1$ (the coupled walk first enters $1$ or $L-1$ exactly when the process first returns to $\cR_{\mathrm{init}}$). Each time the process goes from $\cR_{\mathrm{init}}$ to $\cR'$, we start a new coupling to a new random walk. At this point, we observe that we have given a \textit{complete} description of the dynamics of $\{X_{t}\}_{t \geq 0}$ until the first time that the process exits $\cR$ in terms of a few exponential random variables (the exit times from $\cG$ and $\cR_{\mathrm{init}}$) and random walks on the path $[1:(L-1)]$. The associated distributions are all very well-understood.

We introduce notation to formalize the above paragraph. Consider $\s \in \cG$; the total escape rate from $\s$ is $q_L = L^2 e^{-4 \b} \approx L^{-6}$, and all the possible transitions are to configurations in $\cR_{\textrm{init}}$. That is,
\begin{align*}
  -\cL(\s,\s) = \sum_{\h \in \cR_{\textrm{init}}}\cL(\s,\h) = q_L \,,
\end{align*}
and for each $\h \in \cR_{\textrm{init}}$ we have $\cL(\s,\h) \in \{0, e^{-4 \b}\}$.

Now consider $\s \in \cR_{\textrm{init}}$. This configuration moves to the single adjacent element of $\cG$ at rate $1$, to each of the four adjacent elements of $\cR'$ at rate $1$, and to the $L^{2} - 5$ adjacent elements of $\cR^{c}$ at total rate $4 e^{-2 \beta} + (L^{2}-9) e^{-4 \beta} \approx e^{-2 \beta}$.

Finally, we describe the excursions from $\cR_{\mathrm{init}}$ to $\cR_{\mathrm{init}} \cup \cR^{c}$ - that is, the law of the process from its first exit of a point in $\cR_{\mathrm{init}}$ to its next entrance to a point in $\cR_{\mathrm{init}} \cup \cR^{c}$.
 Fix $m \in [1:L]$ and a point of the form $X_{0} = \mathcal{R}(\sigma,\eta,m,2) \in \cR'$ or $X_0 = \mathcal{R}(\sigma,\eta,m,L-2) \in \cR'$. Recall that any element of $\cR'$ that is adjacent to an element of $\cR_{\rm init}$ has a unique representation of this form. Let $\{X_{t}\}_{t \geq 0}$ be the usual defects process, started at $X_{0}$. Next, let $\{L_{t}, U_{t}\}_{t \geq 0}$ be two independent simple random walks on $\mathbb{Z}$ with rate 1 and starting points 
 \begin{align*}
 L_0 =m\,, \quad \textrm{and} \quad U_0 =\begin{cases}
 m+2 & \textrm{if } X_0 = \mathcal{R}(\sigma,\eta,m,2)\,,\\
 m-2 & \textrm{if } X_0 = \mathcal{R}(\sigma,\eta,m,L-2)\,.
 \end{cases}
 \end{align*}
 Define the ``signed difference" random walk
\be
D_{s} \equiv U_{s} - L_{s};
\ee
note that this is a simple random walk on $\mathbb{Z}$ with rate 2. Define the stopping times
 \be
\tau_{\rm esc} &= \inf \{ s > 0 \, : \, X_{s} \notin \mathcal{R}' \}\\
\tau_{\rm flip} &= \inf \{ s > 0 \, : \, |D_{s}| = L-1\} \\
\tau_{\rm ret} &= \inf \{s > 0 \, : \,  |D_{s}| = 1 \}\\
\tau_{\min} &= \min \{ \tau_{\rm flip}, \tau_{\rm ret} \}.
\ee
Inspecting the generators of these processes, it is possible to couple $\{(X_{t},L_{t},U_{t})\}_{t \geq 0}$ so that
\be
X_{t} = \mathcal{R}(\sigma,\eta, L_{t}, D_{t} \bmod L)
\ee
for all $0 \leq t \leq \tau_{\rm esc} \leq \tau_{\min}$ (it is clear that $\tau_{\rm esc} \leq \tau_{\min}$ under this coupling).

With a slight abuse of notation we right $\bbP$ and $\bbE$ for the law and expectation of the coupling described above. We denote by $\mu_{\mathrm{srw}}$ the distribution of $\tau_{\rm esc}$, and $p_{\rm flip} = \P[\tau_{\rm flip} < \tau_{\rm ret} ]$. Recalling some standard facts about random walk (see \textit{e.g.} Chapter 4 of \cite{durrett2019probability}), we have for some constant $0 < C < \infty$
\be \label{IneqEscDom}
\E[\tau_{\min}] &\lesssim L, \\
\P[\tau_{\min} > C k L^{2}] &\leq e^{-k}, \qquad \forall k \in \mathbb{N} \\
p_{\rm flip} &\approx L^{-1}\,.
\ee

Having described the \textit{lengths} of excursions from $\cR_{\mathrm{init}}$ to $\cR_{\mathrm{init}} \cup \cR^{c}$, we now describe their \textit{endpoints}. For $i \in [1:L]$, let $r_{i} = [1:L] \times \{i\}$ be the $i$'th row and $c_{i} = \{i \} \times [1:L]$ be the $i$'th column. For $\sigma \in \cG$, define the collection of neighbours of $\sigma$ to be the $2L$ elements of $\cG$ that differ from $\sigma$ by flipping a single row or column - that is:
\be
\mathcal{N}(\sigma) = \mathcal{N}_{r}(\sigma) \cup \mathcal{N}_{c}(\sigma) \equiv \{ \sigma^{r_{i}} \}_{i \in [1:L]} \cup \{ \sigma^{c_{i}} \}_{i \in [1:L]}.
\ee
Note that each triple $\sigma \in \cG$ and $i,j \in [1:L]$ defines a unique element $\sigma^{(i,j)} \in \cR_{\mathrm{init}}$. Furthermore, every element in $\cR_{\mathrm{init}}$ has \textit{exactly} one representation of this form.


Having completed initial calculations, we can give an easy bound on the probability of exiting $\mathcal{R}$ over the relevant timescale:
\begin{lemma}
\label{LemmaExcEst}
Let $X_{0} = \s \in \mathcal{R}'$. For this chain, let
\be
\tau_{\mathrm{good}} = \min \{t > 0 \, : \, X_{t} \in \mathcal{R}_{\mathrm{init}} \} \quad \textrm{and} \quad \tau_{\mathrm{bad}} =  \min \{t > 0 \, : \, X_{t} \in \cR^c \}
\ee
be the first time that the chain either returns to a neighbour of $\cG$ or escapes $\cR'$. Then
\be
\P[\tau_{\mathrm{bad}} < \tau_{\mathrm{good}}] \lesssim L^{-3}.
\ee
\end{lemma}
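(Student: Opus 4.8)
The plan is to read the event $\{\tau_{\mathrm{bad}} < \tau_{\mathrm{good}}\}$ off of the explicit simple-random-walk coupling of Section~\ref{SubSubSecInitCoupCon}, and then to show that transitions out of $\mathcal{R}'$ into $\mathcal{R}^{c}$ are so slow that they are essentially never taken before the coupled walk reaches the boundary of $[1:L-1]$. Concretely, I would write $\sigma = R(\sigma_{0},\eta_{0},m,k)$ for the representation of Definition~\ref{DefMinEnMinLPaths} and run the coupling of Section~\ref{SubSubSecInitCoupCon}, extended in the obvious way to an arbitrary starting distance $D_{0}=k\in[2:L-2]$, so that $X_{t}=R(\sigma_{0},\eta_{0},L_{t},D_{t}\bmod L)$ for all $0\leq t\leq\tau_{\rm esc}\leq\tau_{\min}$. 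While the process is in $\mathcal{R}'$ it performs only pair moves (Figure~\ref{FigPairMove}); these keep it inside $\mathcal{R}'\cup\mathcal{R}_{\mathrm{init}}\subset\mathcal{R}$ and are tracked exactly by $D_{t}$, so if no transition into $\mathcal{R}^{c}$ is taken first then at time $\tau_{\min}$ the walk has reached $1$ or $L-1$, $X_{\tau_{\min}}\in\mathcal{R}_{\mathrm{init}}$, and $\tau_{\mathrm{good}}=\tau_{\min}$. Hence, up to null events,
\[
\{\tau_{\mathrm{bad}} < \tau_{\mathrm{good}}\} \;=\; \{\text{a transition into }\mathcal{R}^{c}\text{ is taken at some time }t<\tau_{\min}\}.
\]

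The next step is a uniform bound on the rate of such transitions. Any $\zeta\in\mathcal{R}'$ has exactly four defects, sitting at the corners of a thin rectangle (two pairs of adjacent defects whose separation lies between $2$ and $L/2$). As in the analysis of $\mathcal{R}_{\mathrm{init}}$ configurations in Section~\ref{SubSubSecInitCoupCon}, one checks that the only single spin flips that do not strictly increase the number of defects are the four pair moves, and these land in $\mathcal{R}'\cup\mathcal{R}_{\mathrm{init}}$, never in $\mathcal{R}^{c}$; in particular there is no energy-lowering flip out of $\zeta$. Every transition into $\mathcal{R}^{c}$ therefore raises the number of defects --- hence the energy --- by at least $2$, and there are only $O(1)$ such flips of energy cost exactly $2$ (push one defect of a pair outward, creating a new pair) together with $O(L^{2})$ of cost $4$. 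Consequently the total rate of transitions from $\zeta$ into $\mathcal{R}^{c}$ is at most $O(e^{-2\beta})+O(L^{2}e^{-4\beta})=O(e^{-2\beta})$, uniformly over $\zeta\in\mathcal{R}'$.

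To conclude, I would condition on the pair-move trajectory (equivalently, on the walk $\{D_{t}\}$, and hence on $\tau_{\min}$): conditionally, transitions into $\mathcal{R}^{c}$ are dominated by a Poisson clock of rate $Ce^{-2\beta}$ for a constant $C$, so using $1-e^{-x}\leq x$,
\[
\P[\tau_{\mathrm{bad}} < \tau_{\mathrm{good}}] \;\leq\; \E\big[1-e^{-Ce^{-2\beta}\tau_{\min}}\big] \;\leq\; Ce^{-2\beta}\,\E[\tau_{\min}] \;\lesssim\; e^{-2\beta}L,
\]
where the last step invokes $\E[\tau_{\min}]\lesssim L$ from \eqref{IneqEscDom}. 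Since $L=L_{c}=\lfloor e^{\beta/2}\rfloor$ forces $e^{-2\beta}\leq L^{-4}$, the right-hand side is $\lesssim L^{-3}$, which is the claim.

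The main obstacle is the second step: the uniform $O(e^{-2\beta})$ escape-rate bound requires a careful case analysis of the single-flip transitions out of a generic thin-rectangle configuration --- verifying both that none lowers the energy and that only a bounded number raise it by exactly $2$ --- and this is the geometric heart of the argument. Once it is in hand, the random-walk estimate \eqref{IneqEscDom} and the scaling $e^{-2\beta}\leq L^{-4}$ make the remainder routine; one should also note in passing that the $\mathcal{R}^{c}$-transition process can be stochastically dominated by a rate-$Ce^{-2\beta}$ clock conditionally on $\tau_{\min}$, which follows from the standard (thinned) construction of the jump chain.
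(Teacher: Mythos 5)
Your proposal is correct and follows essentially the same route as the paper's proof: couple the $\mathcal{R}'$ dynamics to the signed-difference walk $D_t$, identify $\{\tau_{\mathrm{bad}}<\tau_{\mathrm{good}}\}$ with $\{\tau_{\rm esc}<\tau_{\min}\}$, bound the escape rate into $\mathcal{R}^{c}$ uniformly by $O(e^{-2\beta})=O(L^{-4})$, and multiply by $\E[\tau_{\min}]\lesssim L$ from \eqref{IneqEscDom}. Your Poisson-clock domination is just a slightly more explicit rendering of the paper's expected-occupation-time bound, and your geometric case analysis of the flips out of a thin-rectangle configuration fills in the rate estimate the paper simply quotes from Section \ref{SubSubSecInitCoupCon}.
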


\begin{proof}
Recall that, for $0 \leq t \leq \min\{\tau_{\mathrm{good}}, \tau_{\mathrm{bad}}\}$, the chain $\{X_{t}\}_{t \geq 0}$ can be coupled to a 1-dimensional random walk $D_t$ as above.
Also under this coupling we observe that $\{\tau_{\mathrm{bad}} < \tau_{\mathrm{good}}\} = \{\tau_{\rm esc}<\tau_{\rm min}\}$.
Recall $\tau_{\mathrm{min}}$ is the hitting time of $\{1\}\cup\{L-1\}$ for $|D_t|$, and the rate of going from any configuration in $\mathcal{R}$ to a configuration in $\cR^c$ (i.e. a configuration with 6 or more defects) is $\approx e^{-2\b}=L^{-4}$. Thus, it follows that
\be
\P[\tau_{\mathrm{bad}} < \tau_{\mathrm{good}}] &= \P[\tau_{\mathrm{esc}} < \tau_{\mathrm{min}}] \leq \E\left[\int_{0}^{\tau_{\mathrm{min}}} \textbf{1}_{\tau_{\mathrm{esc}} = t}dt\right] \leq \E[\tau_{\mathrm{min}}] L^{-4} \lesssim  L^{-3}\,.
\ee
\end{proof}

We now consider the possibility of leaving $\cR$ on the ``relevant" time scale of $\beta^{2} e^{4 \beta}$. Lemma \ref{LemmaExcEst} says that, over the first $\lesssim L^{2.5}$ excursions from $\mathcal{R}_{\mathrm{init}}$, we in fact stay within $\mathcal{R}$ with probability at least $\gtrsim 1 - L^{-0.5}$. Furthermore, since excursions from any ground state start at rate $L^2e^{-4\beta}$, and each time they return to $\cR_{\mathrm{init}}$ they hit a ground state with probability $\gtrsim 1/5 - O(L^{-4})$, we observe that we need to consider only $\approx L^{2} \log(L)^2$ excursions over the relevant timescale with high probability. Taking a union bound over these excrusions, we conclude that started from a ground state we are unlikely to leave $\cR$ on the relevant times scale. This observation is summarized in the following corollary. 

\begin{corollary}
\label{cor:noescape}
 The process started from in $\cG$ does not leave $\cR$ on the timescale of interest, that is
 \begin{align}
   \sup_{\s \in \cG}\bbP_{\s}[\t_{R^c} < \b^2 e^{4 \b}] \lesssim e^{-\frac{\beta}{4}}\,.
 \end{align}
\end{corollary}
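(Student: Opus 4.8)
The plan is to exploit the excursion structure coming from the partition $\cR = \cG \sqcup \cR_{\mathrm{init}} \sqcup \cR'$ described in Section~\ref{SubSubSecInitCoupCon}, and to show that over the time horizon $T = \beta^{2} e^{4\beta}$ each of the (few) chances to leave $\cR$ is individually very unlikely. The starting observation is that, from a ground state $\s \in \cG$, every transition lands in $\cR_{\mathrm{init}}$, so the process can only exit $\cR$ either (i)~during one of its sojourns in $\cR'$, by jumping to $\cR^{c}$ rather than returning to $\cR_{\mathrm{init}}$, or (ii)~by a direct $\cR_{\mathrm{init}} \to \cR^{c}$ jump. It therefore suffices to bound $\bbP_{\s}$ of each of (i) and (ii) before time $T$ by $O(e^{-\beta/4})$.

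The combinatorial core is a high-probability bound on the number $N$ of times the chain enters $\cR'$ from $\cR_{\mathrm{init}}$ before time $T$. First I would bound the number $N_{\cG}$ of departures from $\cG$: the holding times at $\cG$ are i.i.d.\ $\mathrm{Exp}(q_{L})$ with $q_{L} = L^{2} e^{-4\beta}$, so the $k$-th departure occurs no earlier than the $k$-th arrival of a rate-$q_{L}$ Poisson process, whence $N_{\cG}$ is stochastically dominated by $\mathrm{Poisson}(q_{L} T) = \mathrm{Poisson}(\beta^{2} L^{2})$ (recall $L = L_{c}$, so $q_{L} T = \beta^{2} L^{2}$); a Chernoff bound gives $N_{\cG} \le 2\beta^{2} L^{2}$ off an event of probability $e^{-c\beta^{2} L^{2}}$, utterly negligible next to $e^{-\beta/4}$. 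Within a single $\cG$-excursion the chain visits $\cR_{\mathrm{init}}$ only a geometric number of times with mean $O(1)$, since from any point of $\cR_{\mathrm{init}}$ the rate to $\cG$ is $1$, the rate to $\cR'$ is $4$, and the rate to $\cR^{c}$ is $O(e^{-2\beta})$, so the excursion terminates at $\cG$ with probability $\ge 1/5 - O(L^{-4})$. Summing $N_{\cG}$ such geometrics and using a standard concentration estimate then yields $N \lesssim \beta^{2} L^{2}$ — and likewise the total number of $\cR_{\mathrm{init}}$-visits before $T$ is $\lesssim \beta^{2} L^{2}$ — with probability $\ge 1 - e^{-c\beta^{2} L^{2}}$.

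Given this, the two escape routes are easy. For (i), by the strong Markov property each sojourn in $\cR'$ that begins from $\cR_{\mathrm{init}}$ is an independent copy of the excursion in Lemma~\ref{LemmaExcEst}, which escapes to $\cR^{c}$ before returning with probability $\lesssim L^{-3}$; a union bound over the $\lesssim \beta^{2} L^{2}$ such sojourns gives $\bbP_{\s}[\text{(i) before } T] \lesssim \beta^{2} L^{2} \cdot L^{-3} + e^{-c\beta^{2}L^{2}} \lesssim \beta^{2} L^{-1} = \beta^{2} e^{-\beta/2} \lesssim e^{-\beta/4}$. For (ii), each visit to $\cR_{\mathrm{init}}$ has mean length $O(1)$, so the total time spent in $\cR_{\mathrm{init}}$ before $T$ is $\lesssim \beta^{2} L^{2}$ with high probability, and since the $\cR_{\mathrm{init}} \to \cR^{c}$ jump rate is $\lesssim e^{-2\beta} = L^{-4}$, the expected number of such jumps before $T$ is $\lesssim \beta^{2} L^{-2}$; Markov's inequality gives $\bbP_{\s}[\text{(ii) before } T] \lesssim \beta^{2} L^{-2} \lesssim e^{-\beta/4}$. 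Adding the two bounds and taking the supremum over $\s \in \cG$ proves the corollary.

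The main obstacle is purely organizational: the successive excursions are not literally independent, so one must invoke the strong Markov property at the appropriate entrance times (to $\cR'$ for (i), to $\cR_{\mathrm{init}}$ for (ii)) in order that the per-excursion bound of Lemma~\ref{LemmaExcEst} and the per-visit jump-rate bound apply conditionally on the past; and one must handle the (probability-zero-on-the-good-event) possibility that some excursion never returns, e.g.\ by running all the counting processes only up to $\tau_{R^{c}} \wedge T$ and noting that such truncation can only decrease the counts. Once $N \lesssim \beta^{2}L^{2}$ is established with a tail far below $e^{-\beta/4}$, no further ideas are needed.
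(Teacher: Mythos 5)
Your proposal is correct and follows essentially the same route as the paper, which proves the corollary via the informal paragraph preceding it: dominate the number of departures from $\cG$ by a Poisson variable of mean $q_{L}T \approx \beta^{2}L^{2}$, control the number of $\cR_{\mathrm{init}}$-visits per excursion by a geometric variable, apply the per-sojourn escape bound $\lesssim L^{-3}$ of Lemma \ref{LemmaExcEst}, and union bound to get $\lesssim \beta^{2}L^{-1} \lesssim L^{-1/2} = e^{-\beta/4}$. Your explicit separate treatment of the direct $\cR_{\mathrm{init}}\to\cR^{c}$ jumps is a welcome piece of bookkeeping that the paper handles only implicitly, but it is not a different argument.
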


We now consider $\sigma \in \cG$, $i,j \in [1:L]$ and a process $\{X_{t}\}_{t \geq 0}$ started at $X_{0} = \sigma^{(i,j)} \in \cR_{\mathrm{init}}$. For this process, let 
\be
\tau_{\rm start} = \inf \{ t > 0 \, : \, X_{t} \neq X_{0} \}\,,\quad \textrm{and} \quad 
\tau_{\rm end} = \inf \{t \geq \tau_{\rm start} \, : \, X_{t} \notin \cR' \}.
\ee
By the above calculations (and symmetry),
\be  \label{EqManyBasicCoupFacts}
\P[X_{\tau_{\rm end}} = \sigma] &= \left(\frac{1}{5}-O(L^{-3})\right)\\
\P[X_{\tau_{\rm end}} \in \{\sigma^{x}\}_{x\in c_i\cup r_j}] &=\left( \frac{4}{5}-O(L^{-3})\right)(1 - p_{\rm flip}) \\
\P[X_{\tau_{\rm end}} \in \{\sigma^{c_{i} \backslash \{(i,k)\}} \}_{k \in [1:L]}] &= \left( \frac{2}{5}-O(L^{-3})\right)p_{\rm flip} \\
\P[X_{\tau_{\rm end}} \in \{\sigma^{r_{j} \backslash \{(k,j)\}} \}_{k \in [1:L]}] &= \left( \frac{2}{5}-O(L^{-3})\right)p_{\rm flip}. \\
\ee

\subsubsection{Summary of Dynamics on $\cR$}

Using the description of the process in Section \ref{SubSubSecInitCoupCon}, we have learned the following about $\{ X_{t}\}_{t \geq 0}$ and its trace  $\{ \hat{X}_{t}\}_{t \geq 0}$ on $\cG$ started at $X_{0} = \sigma \in \cG$ (also summarized in Figure \ref{Figtypical_excursions}):

\begin{figure}[h]
\includegraphics[width = 0.6\textwidth]{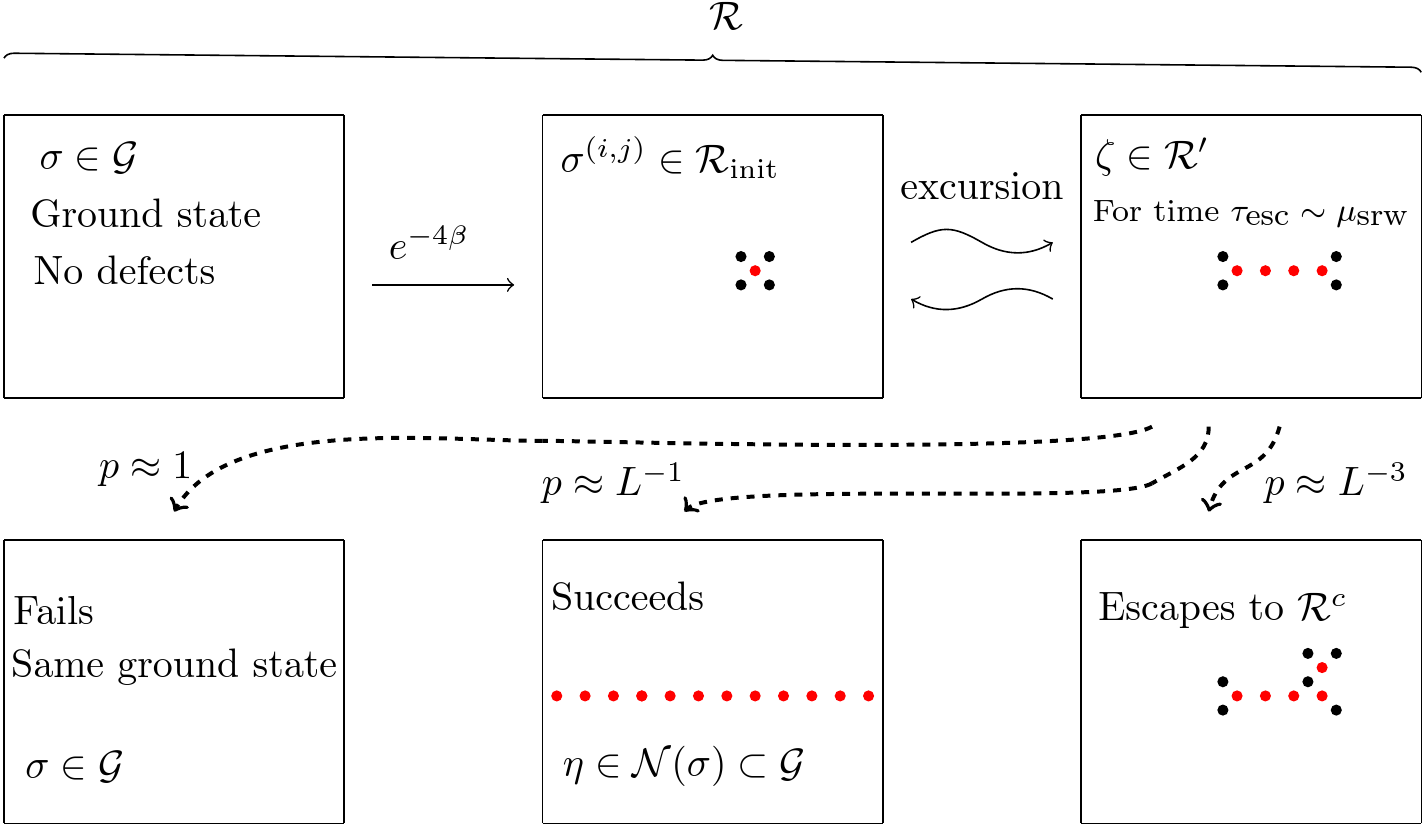}
\caption{\label{Figtypical_excursions} A cartoon of the types of transitions that are most likely during an excursion. }
\end{figure}

\begin{enumerate}
\item $X_{t}$ first exits $\{ \sigma \}$ to a point of the form $\sigma^{(i,j)} \in \cR_{\mathrm{init}}$.
\item $X_{t}$ then takes some number of excursions (possibly 0) from $\cR_{\mathrm{init}}$ to $\cR_{\mathrm{init}} \cup \cR^{c}$ before entering $\cG$. Each of these excursions takes an amount of time sampled from $\mu_{\rm srw}$, and results in returning to an element of $\{\sigma^{x}\}_{x \in c_i \cup r_j}$  with probability $\approx 1 - p_{\rm flip}$ and moving to an element of $\{\sigma^{c_{i} \backslash \{(i,k)\}}, \sigma^{r_{j} \backslash \{(k,j)\}} \}_{k \in [1:L]}$  with probability $\approx p_{\rm flip}$. We call one of these latter moves \textit{completing a flip}, as we end up at a point that has flipped all-but-one point in a single row or column, and is therefore close to a ground state which is in $\cN(\s)$.
\item Each time the excursion returns to  $\cR_{\mathrm{init}}$, the process then moves to the unique adjacent point in $\cG$ with probability $\frac{1}{5} - O(L^{-4})$, and so the \textit{total} rate at which the trace $\{\hat{X}_{t}\}_{t \geq 0}$ of $\{X_{t}\}_{t \geq 0}$ on $\cG$ moves is
\be \label{EqTotRate}
\Theta(L^{2}e^{-4 \beta} p_{\rm flip}) = \Theta(L^{-7}).
\ee
By symmetry, this trace process moves from $\sigma$ to each element of $\mathcal{N}(\sigma)$ at the same rate, which we denote by $a_{L}$. By \eqref{EqTotRate} and the fact that $|\mathcal{N}(\sigma)| = 2L$, we have shown that in fact $a_{L} \approx L^{-8}$. Since an excursion from $\sigma \in \cG$ to $\cG$ that \textit{doesn't} end in $\{\sigma\} \cup \mathcal{N}(\sigma)$ must ``complete at least two flips," and ``completing a flip" occurs with probability $p_{\rm flip} \approx L^{-1}$, we have also shown that the \textit{total} transition rate to  $\cG \backslash (\sigma \cup \mathcal{N}(\sigma))$ is $\Theta(L^{-8})$. Thus, describing the trace dynamics up to an approximation error of $L^{-1}$, we have
\be \label{EqFirstOrderAppr}
Q_{\cG}(\sigma,\sigma^{r_{i}}) = Q_{\cG}(\sigma,\sigma^{c_{i}}) = a_{L} &\approx L^{-8}, \qquad \textrm{for } \ i \in [1:L]\,, \\
Q_{\cG}(\sigma, \mathcal{N}(\sigma)^{c}) & \lesssim L^{-8}\,,
\ee
where we have used Corollary \ref{cor:noescape}.
\item We've noted that, from an element $\sigma^{(i,j)} \in \cR_{\mathrm{init}}$, the process will complete a flip and  move to an element in $\{\sigma^{r_{i} \backslash \{(k,i)\}} \}_{k \in [1:L]}$ \textit{before} leaving $\cR\setminus \cG$ with probability $\approx p_{\rm flip} \approx L^{-1}$.
 Thus, the probability of completing \textit{two} flips (that is,  moving to \textit{any} element of the form $\sigma^{r_{i} \cup c_{k} \backslash \{ x \}}$ or  $\sigma^{c_{j} \cup r_{k} \backslash \{x\}}$ for some $x \in [1:L]^{2}$) before leaving $\cR \backslash \cG$ is $\approx p_{\rm flip}^{2} \approx L^{-2}$. Define $\cN'(\sigma) =  \{ \sigma^{r_{i}\cup c_k} \}_{i,k \in [1:L]} \cup \{ \sigma^{c_{i}\cup r_k} \}_{i,k \in [1:L]}$, again by symmetry, the trace process moves from $\sigma$ to each element of $\cN'(\sigma)$ at the same rate, which we denote by $a_{L}'$. Noting that $|\cN'(\s)| \lesssim L^2$, we can thus refine the first-order approximation \eqref{EqFirstOrderAppr} to the second-order approximation
\be
Q_{\cG}(\sigma,\sigma^{r_{i}}) = Q_{\cG}(\sigma,\sigma^{c_{i}}) &= a_{L}, \qquad \qquad i \in [1:L]\,, \\
Q_{\cG}(\sigma, \eta) &=a_{L}'\lesssim L^{-9}, \qquad \eta \in \cN'(\s)\,, \\
Q_{\cG}\Big(\sigma, \big(\mathcal{N}(\sigma) \cup \cN'(\s)\big)^{c}\Big) &\lesssim L^{-9}\,,
\ee
where again we have used Corollary \ref{cor:noescape}.

\item Of course it is possible to complete three or more flips (or exit $\cR$ entirely) before returning to $\cG$, but the above bounds show that this occurs only with probability $O(L^{-3})$ and so is negligible on our time scale.
\end{enumerate}

This summary has two immediate corollaries:.

\begin{corollary} \label{LemmaHyperComp}
There exists a sequence of transition rate matrices $Q_{\rm hyp}$ on the sequence of state spaces $\{-1,+1\}^{2L-1}$ such that the off diagonal elements ($u\neq v$) satisfy:
\begin{align*}
Q_{\rm hyp}(u,v) =
\begin{cases}
 a_{L}\approx L^{-8}, & |u-v| = 1 \text{ or } u = -v\,,\\
a_{L}' \lesssim L^{-9}, &|u-v| = 2 \text{ or } |u-v| = 2L-2 \,, \\
 0, &\textrm{otherwise,}
\end{cases}
\end{align*}
and also
\be \label{IneqLemmaHyperCompMainConc}
\sup_{u \neq v \in \{-1,+1\}^{2L-1}} | Q_{\mathcal{G}}^{\rm per} (w^{-1}(u),w^{-1}(v)) - Q_{\rm hyp} (u,v) | \lesssim L^{-9},
\ee
where $w$ is as in Definition \ref{DefBijectionPeriodic}.
\end{corollary}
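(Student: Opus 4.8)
The plan is to treat the corollary as bookkeeping layered on top of the summary of the trace dynamics in points (1)--(5) above. Concretely I would: (i) translate the neighbour structure of $\cG$ through the bijection $w$ of Definition \ref{DefBijectionPeriodic}; (ii) define $Q_{\rm hyp}$ by the displayed off-diagonal formula, using the rates $a_L\approx L^{-8}$ and $a_L'\lesssim L^{-9}$ produced by that summary, and with diagonal entries forced by the requirement that each row of $Q_{\rm hyp}$ sum to zero; and (iii) verify the entrywise estimate \eqref{IneqLemmaHyperCompMainConc} by splitting on $u\neq v$.

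For step (i): reading the inverse formula $w^{-1}(v)[i,j] = v[i]v[L+j]$ for $j\in[1:L-1]$ and $w^{-1}(v)[i,L] = v[i]$, flipping the column $c_i$ of a ground state $\sigma$ flips exactly the coordinate $w(\sigma)[i]$; flipping the row $r_j$ with $j\in[1:L-1]$ flips exactly $w(\sigma)[L+j]$; and flipping the row $r_L$ sends $w(\sigma)$ to $-w(\sigma)$. Hence $w(\cN(\sigma)) = \{u : |u-w(\sigma)| = 1\}\cup\{-w(\sigma)\}$, a set of size $2L$, and composing two such moves shows $w(\cN'(\sigma)) \subseteq \{u : |u-w(\sigma)|\in\{2,2L-2\}\}$. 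In particular, any $v\neq u$ with $w^{-1}(v)\notin \cN(w^{-1}(u))\cup\cN'(w^{-1}(u))$ satisfies $|u-v|\notin\{1,2L-1\}$, and if moreover $|u-v|\notin\{2,2L-2\}$ then $w^{-1}(v)$ also lies outside $w(\cN'(w^{-1}(u)))$.

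For step (iii), fix $u\neq v$ in $\{-1,+1\}^{2L-1}$. If $|u-v| = 1$ or $u=-v$, then $w^{-1}(v)\in\cN(w^{-1}(u))$ and the first line of the displayed second-order approximation in point (4) of the summary gives $Q^{\rm per}_{\cG}(w^{-1}(u),w^{-1}(v)) = a_L = Q_{\rm hyp}(u,v)$, so the difference vanishes. If $|u-v|\in\{2,2L-2\}$ and $w^{-1}(v)\in\cN'(w^{-1}(u))$, the second line gives $Q^{\rm per}_{\cG}(w^{-1}(u),w^{-1}(v)) = a_L' = Q_{\rm hyp}(u,v)$, again with vanishing difference. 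If $|u-v|\in\{2,2L-2\}$ but $w^{-1}(v)\notin\cN'(w^{-1}(u))$, then $w^{-1}(v)\in(\cN(w^{-1}(u))\cup\cN'(w^{-1}(u)))^c$, so the third line bounds $Q^{\rm per}_{\cG}(w^{-1}(u),w^{-1}(v)) \lesssim L^{-9}$, while $Q_{\rm hyp}(u,v) = a_L'\lesssim L^{-9}$, so the difference is $\lesssim L^{-9}$. In every remaining case $Q_{\rm hyp}(u,v) = 0$ and $w^{-1}(v)\in(\cN(w^{-1}(u))\cup\cN'(w^{-1}(u)))^c$, whence $Q^{\rm per}_{\cG}(w^{-1}(u),w^{-1}(v)) \lesssim L^{-9}$ and the difference is again $\lesssim L^{-9}$. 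Taking the supremum over $u\neq v$ gives \eqref{IneqLemmaHyperCompMainConc}, and non-negativity of the off-diagonal entries of $Q_{\rm hyp}$ (each equals $a_L$, $a_L'$, or $0$) confirms it is a bona fide transition rate matrix.

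There is no substantive obstacle — the real content was already produced in the summary — but the one point I would be careful to state precisely is that the $O(L^{-9})$ estimates in point (4) are uniform in $\sigma\in\cG$ and in $\beta$. Uniformity in $\sigma$ is exactly what the transitive action of the row/column-flip symmetries on $\cG$ supplies (this is what makes $a_L$ and $a_L'$ well-defined as functions of $L$ alone), and uniformity in $\beta$ is built into the $\lesssim$ convention together with $L=L_c=\lfloor e^{\beta/2}\rfloor$; both ultimately rest on Corollary \ref{cor:noescape}, which rules out excursions leaving $\cR$ on the relevant timescale. Everything else is the deterministic translation of the neighbour structure through $w$.
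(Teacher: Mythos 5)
Your proposal is correct and follows exactly the route the paper intends: the paper presents this corollary as an immediate consequence of the summary of the trace dynamics in Section 6.2.2, and your write-up simply supplies the bookkeeping (the translation of row/column flips through $w$ into Hamming-1 moves, the antipodal move, and the Hamming-2 / $(2L-2)$ moves, followed by the case split against the second-order approximation in point (4)). The one place you add genuinely useful precision beyond the paper's "immediate" is the observation that pairs with $|u-v|\in\{2,2L-2\}$ not realized by a row-plus-column flip still satisfy the bound because both $Q_{\cG}^{\rm per}$ and $Q_{\rm hyp}$ are separately $O(L^{-9})$ there.
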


The above couplings describe all excursions from $\cG$ to $\cG$ that don't leave $\cR$ in terms of simple random walks and geometric random variables. Since the walk doesn't leave $\cR$ until time $\beta^{2} e^{4 \beta}$ with high probability by Corollary \ref{cor:noescape}, this immediately implies the following weak concentration bound of the occupation time $s_{\cG}$ of $\cG$:

\begin{corollary} \label{LemmaExpectedExcursionPerGr}
For any sequence $e^{4 \beta} \lesssim T(L) \lesssim \beta^{2} e^{4 \beta}$ and any $\epsilon > 0$,
\be
\lim_{L \rightarrow \infty} \sup_{\sigma \in \cG} \P[s_{\cG}(T(L)) < (1 - \epsilon) T(L)] = 0.
\ee
\end{corollary}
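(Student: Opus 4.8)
The plan is to reduce, via Corollary~\ref{cor:noescape}, to the event that the process never leaves $\cR$, on which the dynamics is exactly the explicit excursion structure described in Section~\ref{SubSubSecInitCoupCon}, and then to bound the total time spent outside $\cG$ by a soft first-moment/excursion-counting argument. Fix $\sigma \in \cG$ and a sequence $T = T(L)$ with $e^{4\beta} \lesssim T \lesssim \beta^{2} e^{4\beta}$, and let $A$ be the event that $X_{t} \in \cR$ for all $0 \le t \le T$. By Corollary~\ref{cor:noescape}, $\sup_{\sigma \in \cG}\P_{\sigma}[A^{c}] \lesssim e^{-\beta/4} \to 0$, so it suffices to bound $\P_{\sigma}[\{s_{\cG}(T) < (1-\epsilon)T\}\cap A]$. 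On $A$ we may replace $\{X_{t}\}$ by the process $\{Y_{t}\}$ obtained from the generator \eqref{eq:gen} by deleting all transitions into $\cR^{c}$: then $X_{t} = Y_{t}$ for $t \le T$ on $A$, and $\{Y_{t}\}$ is a Markov process on $\cR$ whose excursions away from $\cG$ are precisely the ``clean'' ones of Section~\ref{SubSubSecInitCoupCon}, with no conditioning subtleties. Hence it is enough to show that the occupation time $s_{\cG}^{Y}(T)$ of $\cG$ under $Y$ satisfies $\P_{\sigma}[s_{\cG}^{Y}(T) < (1-\epsilon)T] \to 0$, uniformly in $\sigma \in \cG$.

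For $Y$ started at $\sigma \in \cG$, decompose the trajectory into an alternating sequence of single-ground-state sojourns $G_{0}, G_{1}, \ldots$, which by the description around \eqref{EqManyBasicCoupFacts} are i.i.d. $\mathrm{Exp}(q_{L})$ with $q_{L} = L^{2} e^{-4\beta}$, and excursions $E_{1}, E_{2}, \ldots$ away from $\cG$. Let $N(T)$ be the number of excursions begun by time $T$; then $T - s_{\cG}^{Y}(T) \le \sum_{i=1}^{N(T)} E_{i}$. Since the first $N(T)$ sojourns all lie inside $[0,T]$, we have $\sum_{i=0}^{N(T)-1} G_{i} \le T$, so with $n = \lceil 2 q_{L} T \rceil$ a standard Chernoff bound for a sum of $n$ i.i.d. exponentials gives $\P[N(T) \ge n] \le \P\big[\sum_{i=0}^{n-1} G_{i} \le T\big] \le c^{\,q_{L} T}$ for some $c < 1$; this tends to $0$ because $q_{L} T \gtrsim L^{2} \to \infty$ (using $T \gtrsim e^{4\beta}$ and $e^{-4\beta} \approx L^{-8}$).

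It remains to control $\sum_{i=1}^{n} E_{i}$. Each excursion $E_{i}$ consists of a $\mathrm{Geom}(\Theta(1))$ number of sub-excursions into $\cR'$ — with return probabilities as in \eqref{EqManyBasicCoupFacts} — interleaved with $\Theta(1)$-rate holding times at $\cR_{\mathrm{init}}$, and each sub-excursion has length distributed as $\tau_{\min}$ with $\E[\tau_{\min}] \lesssim L$ by \eqref{IneqEscDom}. A Wald identity then gives $\E[E_{1}] \lesssim L$, and by symmetry this is uniform in $\sigma$. On $\{N(T) < n\}$ we have $T - s_{\cG}^{Y}(T) \le \sum_{i=1}^{n} E_{i}$, so Markov's inequality yields $\P\big[\sum_{i=1}^{n} E_{i} > \epsilon T\big] \le n\,\E[E_{1}]/(\epsilon T) \lesssim q_{L} L/\epsilon$, and $q_{L} L = L^{3} e^{-4\beta} \approx e^{-5\beta/2} \to 0$ since $L = L_{c} \approx e^{\beta/2}$. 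Combining the two bounds, $\P_{\sigma}[s_{\cG}^{Y}(T) < (1-\epsilon)T] \le \P[N(T)\ge n] + \P\big[\sum_{i=1}^{n}E_{i} > \epsilon T\big] \to 0$ uniformly in $\sigma \in \cG$, which together with the reduction in the first paragraph proves the corollary. The only genuinely delicate point is the very first step — decoupling from the (small but nonzero) possibility of leaving $\cR$ — which is exactly what Corollary~\ref{cor:noescape} and the censored chain $\{Y_{t}\}$ are for; everything afterwards is a routine first-moment computation, and the whole statement rests on the elementary numerology $q_{L} L \to 0$, i.e. that the expected time outside $\cG$ over one excursion cycle is negligible compared to the expected time inside.
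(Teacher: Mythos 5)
Your proposal is correct and follows essentially the same route as the paper, which treats the corollary as an immediate consequence of Corollary \ref{cor:noescape} together with the excursion description of Section \ref{SubSubSecInitCoupCon} (exponential $\mathrm{Exp}(q_{L})$ sojourns in $\cG$, geometrically many sub-excursions each of expected length $\lesssim L$, so the expected fraction of time outside $\cG$ is $\lesssim q_{L}L = L^{3}e^{-4\beta}\to 0$). You have simply written out the first-moment and excursion-counting details that the paper leaves implicit, including the censoring step that formalizes the restriction to $\cR$; this is a faithful elaboration, not a different argument.
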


\subsection{Main Bounds}

We begin by checking that $Q_{\rm hyp}$ exhibits cutoff, just like the usual simple random walk on the hypercube, and that $Q_{\mathcal{G}}^{\rm per}$ inherits this cutoff. Denote by $\mu$ the uniform measure on $\{-1,+1\}^{2L-1}$ and, for $\delta > 0$, define the sequences of times
\be
T_{\delta,+} &=   \frac{1 + \delta}{4} a_{L}^{-1} \log(L) \,,  \\
T_{\delta,-} &=    \frac{1 - \delta}{4} a_{L}^{-1} \log(L) \,.
\ee

Denote by $P_{\rm hyp}^{t}$ and $P_{\cG}^t$ the laws of the Markov chain, at time $t$, associated with $Q_{\rm hyp}$ and $Q_{\cG}^{\rm per}$ respectively. The following implies that $Q_{\rm hyp}$ exhibits cutoff:

\begin{lemma} \label{LemmaCutoffSRWWithJump}

For all $\delta > 0$, we have
\be \label{IneqCutoffQUp}
\lim_{L \rightarrow \infty} \max_{u \in \{-1,+1\}^{2L-1}} \| P_{\rm hyp}^{T_{\delta,+}}(u,\cdot) - \mu(\cdot) \|_{\mathrm{TV}} = 0
\ee
and 
\be \label{IneqLowerBoundCutHyp}
\lim_{L \rightarrow \infty} \|  P_{\rm hyp}^{T_{\delta,-}}(\mathbf{+1},\cdot) - \mu(\cdot) \|_{\mathrm{TV}} = 1.
\ee
\end{lemma}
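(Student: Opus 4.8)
The plan is to reduce the cutoff statement for $Q_{\rm hyp}$ to the classical cutoff for lazy/simple random walk on the hypercube $\{-1,+1\}^{2L-1}$, treating the extra ``antipodal'' moves $u \mapsto -u$ and the second-order moves (weight $a_L' \lesssim L^{-9}$, which are $o(a_L)$) as negligible perturbations. First I would observe that $Q_{\rm hyp}$ is invariant under the full symmetry group generated by coordinate sign-flips, and its spectrum is computable explicitly via characters $\chi_S(u) = \prod_{i \in S} u[i]$, $S \subseteq [1:2L-1]$: each $\chi_S$ is an eigenvector, and the eigenvalue associated with $\chi_S$ is
\be
\lambda_S = -2 a_L\big(|S| - (2L-1)\chi_S \text{-weight contributions}\big) - (\text{antipodal term}) - (\text{second-order terms}),
\ee
which one computes to be $\lambda_S = -2 a_L |S| \mp a_L(1 - (-1)^{|S|}) + O(L^{-9}|S|)$ — that is, up to the $O(a_L'\cdot L)$ correction (uniformly $o(a_L)$ since $a_L' \lesssim L^{-9}$ and $a_L \approx L^{-8}$, so $a_L' L = o(a_L)$), the eigenvalues coincide with those of the standard walk whose single-coordinate-flip rate is $a_L$. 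The smallest nonzero eigenvalue in absolute value among $|S| \ge 1$ is $\approx 2 a_L$ (at $|S| = 1$, up to the antipodal correction which only matters at even $|S|$), so the relaxation time is $\Theta(a_L^{-1})$ and, more importantly, the spectral structure is an $O(L^{-1})$ perturbation of that of the simple walk on the hypercube at speed $4a_L$ per coordinate.

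For the upper bound \eqref{IneqCutoffQUp}, I would use the standard Wilson-type / $L^2$ argument: by the symmetry of $Q_{\rm hyp}$ the worst starting point is (up to the group action) $\mathbf{+1}$, and
\be
4 \| P_{\rm hyp}^{t}(u,\cdot) - \mu \|_{\mathrm{TV}}^2 \leq \| P_{\rm hyp}^{t}(u,\cdot)/\mu - 1 \|_{2}^2 = \sum_{S \neq \emptyset} \chi_S(u)^2 e^{2 \lambda_S t} = \sum_{j=1}^{2L-1} \binom{2L-1}{j} e^{2 \lambda_{(j)} t},
\ee
where $\lambda_{(j)}$ denotes the common eigenvalue over $|S| = j$ (it depends only on $j$ because of the symmetry). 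Plugging $\lambda_{(j)} \le -2 a_L j + o(a_L j)$ and $t = T_{\delta,+} = \frac{1+\delta}{4} a_L^{-1}\log L$ gives $\binom{2L-1}{j} e^{2\lambda_{(j)} t} \le (2L)^j L^{-(1+\delta)(1+o(1)) j}$, whose sum over $j \ge 1$ tends to $0$. For the lower bound \eqref{IneqLowerBoundCutHyp} I would use Wilson's method with the test function $W(u) = \sum_{i=1}^{2L-1} u[i] = \chi_{\{1\}}$-type linear statistic (or, to handle the antipodal move cleanly, a symmetrized version): under the chain started at $\mathbf{+1}$ its mean decays like $e^{\lambda_{(1)} t} \approx e^{-2 a_L t}$ while its variance stays $O(1)\cdot(2L-1)$ under $\mu$ and is controlled along the trajectory; at $t = T_{\delta,-}$ the mean is $\gg \sqrt{\text{variance}}$, so a Chebyshev/second-moment argument separates the law from $\mu$ and forces the total-variation distance to $1$. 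This is entirely parallel to the textbook hypercube computation (e.g. Levin–Peres), the only new ingredient being the explicit eigenvalue perturbation bookkeeping above.

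The main obstacle — really the only point requiring care — is verifying uniformly that the antipodal transition $u \mapsto -u$ at rate $a_L$ and the $O(L^{-9})$-rate second-order moves perturb the hypercube eigenvalues by only an amount that is $o(a_L \log L \cdot j)$ in the exponent for the range of $j$ that matters (small $j$ for the lower bound, all $j$ for the upper bound). For the antipodal term this is exact: $-u$ contributes $-a_L(\chi_S(\mathbf{1}) - \chi_S(-\mathbf{1})) = -a_L(1-(-1)^{|S|})$ to $-\lambda_S$, which is $0$ for odd $|S|$ and $2a_L$ for even $|S|$ — harmless, since it only speeds mixing up. For the second-order terms one needs $a_L' \cdot \#\{\text{such moves from }u\} \cdot t = O(L^{-9} \cdot L^2 \cdot a_L^{-1}\log L) = O(L^{-1}\log L) \to 0$ inside the exponent uniformly in $j$; I would state this as a short lemma and then the rest is the classical argument verbatim.
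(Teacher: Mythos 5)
Your route --- diagonalizing $Q_{\rm hyp}$ in the character basis $\chi_S(u)=\prod_{i\in S}u[i]$, running the $L^2$ bound for the upper estimate and a second-moment argument for the lower one --- is a legitimate alternative to the paper's proof, which instead couples $Q_{\rm hyp}$ to the walk $Q$ without the antipodal move, cites Saloff-Coste for the upper bound, and adapts the LPW lower bound with the statistic $f(x)=\bigl|\sum_i x_i\bigr|$. The upper bound in your version is sound: every generator contributes a nonpositive amount to $\lambda_S$, so $\lambda_{(j)}\leq -2a_Lj$ exactly, and $\sum_{j\geq 1}\binom{2L-1}{j}e^{2\lambda_{(j)}T_{\delta,+}}\leq\sum_{j\geq 1}(2L^{-\delta})^j\to 0$.

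The lower bound, however, has a genuine gap coming from a parity error. The antipodal generator contributes $a_L\bigl((-1)^{|S|}-1\bigr)$ to $\lambda_S$, which is $-2a_L$ for \emph{odd} $|S|$ and $0$ for even $|S|$ --- the opposite of what you assert. In particular it hits $|S|=1$, so $\lambda_{(1)}=-4a_L(1+o(1))$, not $-2a_L$. Consequently the unsymmetrized statistic $W(u)=\sum_i u[i]$ has mean $(2L-1)e^{\lambda_{(1)}T_{\delta,-}}\approx 2L^{\delta}$ at time $T_{\delta,-}$, which is swamped by its standard deviation $\approx\sqrt{2L}$ whenever $\delta<1/2$; the Chebyshev separation fails and no lower bound follows. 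The ``symmetrized version'' you mention parenthetically is therefore not optional but essential: one must use $f(u)=\bigl|\sum_i u[i]\bigr|$ (as the paper does), which is invariant under $u\mapsto -u$ so that the antipodal move drops out entirely. Separately, your final bookkeeping for the second-order moves is arithmetically wrong: $L^{-9}\cdot L^2\cdot a_L^{-1}\log L = L^{-9+2+8}\log L = L\log L$, which diverges rather than vanishes. This is harmless for the upper bound (those moves only make $\lambda_S$ more negative), but for the lower bound you need the sharper count that only $O(L)$ second-order generators $g$ have $\chi_{\{i\}}(g)=-1$, giving a perturbation $O(a_L'L)$ of $\lambda_{(1)}$, and then an argument that $a_L'L=o(a_L)$; with only the stated bound $a_L'\lesssim L^{-9}$ this is exactly borderline and needs to be addressed explicitly rather than dismissed.
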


\begin{proof}
Denote by $Q$ the following generator of a random walk on $\{-1,+1\}^{2L-1}$:
\begin{align*}
Q(u,v) &= a_{L}, \qquad |u-v| = 1\\
Q(u,v) &= a_{L}', \qquad |u-v| = 2\\
Q(u,v) &= 0, \qquad \text{otherwise,} \\
\end{align*}
and denote by $P^{t}$ the law of the associated Markov chain at time $t$. Using the obvious coupling between Markov chains generated by $Q_{\rm hyp}$ and $Q$, we have for all $u \in \{-1,+1\}^{2L-1}$, $A \subset \{-1,+1\}^{2L-1}$ and $t \geq 0$
\be
| P_{\rm hyp}^{t}(u,A) - \mu(A) | \leq \max \{| P^{t}(u,A) - \mu(A) |, | P^{t}(-u,A) - \mu(A) |\}.
\ee
Applying Theorem 2.4.2 of \cite{saloff1997lectures}\footnote{Theorem 2.4.2 of \cite{saloff1997lectures} is written for the case $a_{L}' \equiv 0$. However, due to the fact that this is a random walk on the Cayley graph of an Abelian group, adding additional moves corresponding to $a_{L}' > 0$ cannot \textit{increase} the mixing time compared to the $a_{L}' \equiv 0$ case.} gives the upper bound on the mixing time in Equation \eqref{IneqCutoffQUp}.

To prove the Equation \eqref{IneqLowerBoundCutHyp}, consider the distinguishing statistic $f(x) = \big|\sum_i x_i\big|$ for $x \in \{-1,+1\}^{2L-1}$. Inequality \eqref{IneqLowerBoundCutHyp} can be checked easily by reading the proof of the analogous result for the usual random walk on the hypercube, as in \textit{e.g.} the bound in Inequality (7.27) of \cite{LPW09}. In re-reading the proof of Inequality (7.27) of \cite{LPW09}, note that $f(-x) = f(x)$, so that the extra `all-flip' transition which is not present in the usual random walk on the hypercube does not affect the calculation.\footnote{Note that \cite{LPW09} works in discrete time, our result follows immediately from the discrete-time result and standard bounds on the concentration of Poisson random variables.}
This completes the proof.

\end{proof}

From this, we deduce the following cutoff phenomenon for the trace process. Let $\tilde \mu = \mu\circ w$ denote the uniform measure on the set of ground states.

\begin{corollary} \label{LemmaTraceMixing}
We have
\be
\lim_{L \rightarrow \infty} \max_{\s \in \cG} \| P_{\cG}^{T_{\delta,+}}(\s,\cdot) - \tilde\mu (\cdot) \|_{\mathrm{TV}} = 0
\ee
and 
\be
\lim_{L \rightarrow \infty} \| P_{\cG}^{T_{\delta,-}}(+,\cdot) - \tilde\mu(\cdot) \|_{\mathrm{TV}} = 1.
\ee
\end{corollary}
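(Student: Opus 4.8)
The plan is to deduce both displayed limits from Lemma~\ref{LemmaCutoffSRWWithJump} by transporting the corresponding statements about the hypercube walk $Q_{\rm hyp}$ along the bijection $w$ of Definition~\ref{DefBijectionPeriodic}. For a probability measure $\nu$ on $\cG$, write $w_{*}\nu$ for its pushforward to $\{-1,+1\}^{2L-1}$. Since $\tilde\mu = \mu\circ w$ and $w$ is a bijection, $\|P_{\cG}^{t}(\sigma,\cdot) - \tilde\mu\|_{\mathrm{TV}} = \| w_{*}P_{\cG}^{t}(\sigma,\cdot) - \mu\|_{\mathrm{TV}}$ for all $\sigma \in \cG$ and $t\geq 0$, and one checks directly from \eqref{EqCristinaBijection} that $w(+) = \mathbf{+1}$. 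Thus it suffices to prove $\max_{u}\| w_{*}P_{\cG}^{T_{\delta,+}}(w^{-1}(u),\cdot) - \mu\|_{\mathrm{TV}}\to 0$ and $\| w_{*}P_{\cG}^{T_{\delta,-}}(+,\cdot) - \mu\|_{\mathrm{TV}}\to 1$, and by \eqref{IneqCutoffQUp}, \eqref{IneqLowerBoundCutHyp} and the triangle inequality both follow once we establish the comparison estimate
\[
\sup_{u \in \{-1,+1\}^{2L-1}} \big\| w_{*}P_{\cG}^{T}(w^{-1}(u),\cdot) - P_{\rm hyp}^{T}(u,\cdot) \big\|_{\mathrm{TV}} \longrightarrow 0 \qquad (L\to\infty)
\]
for every sequence $T = T(L)$ with $T \lesssim \beta^{2}e^{4\beta}$; note $T_{\delta,\pm} = \tfrac{1\pm\delta}{4}a_{L}^{-1}\log L \approx L^{8}\log L \lesssim \beta^{2}e^{4\beta}$ since $a_{L}\approx L^{-8}$ and $L = \lfloor e^{\beta/2}\rfloor$, so the estimate applies at the relevant times.

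To prove the comparison estimate I would first record, using Corollary~\ref{LemmaHyperComp} together with the finer description of the trace generator in the summary above — namely $Q_{\cG}(\sigma,\sigma^{r_{i}}) = Q_{\cG}(\sigma,\sigma^{c_{i}}) = a_{L}$, $Q_{\cG}(\sigma,\eta) = a_{L}'$ for $\eta\in\cN'(\sigma)$, and $Q_{\cG}(\sigma,(\cN(\sigma)\cup\cN'(\sigma))^{c}) \lesssim L^{-9}$, all of which already incorporate Corollary~\ref{cor:noescape} — that under $w$ the generators $Q_{\cG}^{\rm per}$ and $Q_{\rm hyp}$ \emph{coincide exactly} on every transition of Hamming distance $1$, on the antipodal transition, and on every transition of Hamming distance $2$ or $2L-2$ (these are precisely the $w$-images of the ``flip one or two rows/columns'' moves), while $Q_{\cG}^{\rm per}\circ w^{-1}$ carries total rate $\lesssim L^{-9}$ on all remaining transitions, on which $Q_{\rm hyp}$ vanishes. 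Hence $R_{L} := Q_{\cG}^{\rm per}\circ w^{-1} - Q_{\rm hyp}$ is itself a transition rate matrix with $\max_{u}\sum_{v\neq u} R_{L}(u,v) \lesssim L^{-9}$, so the chain generated by $Q_{\cG}^{\rm per}\circ w^{-1}$ is obtained from the one generated by $Q_{\rm hyp}$ by superimposing an independent family of $R_{L}$-jumps. Coupling the two chains to share their $Q_{\rm hyp}$-jumps, they agree until the first $R_{L}$-jump; since that jump occurs at rate $\lesssim L^{-9}$ uniformly, the two chains agree throughout $[0,T]$ except with probability $\leq 1 - e^{-O(L^{-9}T)} \lesssim L^{-9}T \lesssim \beta^{2}L^{-1} \to 0$, which is the comparison estimate.

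Granting the comparison estimate, the triangle inequality finishes the proof: $\max_{u}\| w_{*}P_{\cG}^{T_{\delta,+}}(w^{-1}(u),\cdot) - \mu\|_{\mathrm{TV}} \leq \sup_{u}\| w_{*}P_{\cG}^{T_{\delta,+}}(w^{-1}(u),\cdot) - P_{\rm hyp}^{T_{\delta,+}}(u,\cdot)\|_{\mathrm{TV}} + \max_{u}\|P_{\rm hyp}^{T_{\delta,+}}(u,\cdot)-\mu\|_{\mathrm{TV}} \to 0$ by \eqref{IneqCutoffQUp}, and $\| w_{*}P_{\cG}^{T_{\delta,-}}(+,\cdot) - \mu\|_{\mathrm{TV}} \geq \|P_{\rm hyp}^{T_{\delta,-}}(\mathbf{+1},\cdot) - \mu\|_{\mathrm{TV}} - \| w_{*}P_{\cG}^{T_{\delta,-}}(+,\cdot) - P_{\rm hyp}^{T_{\delta,-}}(\mathbf{+1},\cdot)\|_{\mathrm{TV}} \to 1$ by \eqref{IneqLowerBoundCutHyp} and $w(+)=\mathbf{+1}$. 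The main obstacle is the comparison estimate, and the subtle point there is that Corollary~\ref{LemmaHyperComp} by itself controls only the \emph{entrywise} difference of the two rate matrices, whereas the coupling needs control of the \emph{total} disagreement rate out of a state; this is exactly why one must also invoke the structural description of $Q_{\cG}^{\rm per}$ (supported, up to total rate $O(L^{-9})$, on the $O(L^{2})$ transitions on which it agrees with $Q_{\rm hyp}$). Everything else is routine bookkeeping.
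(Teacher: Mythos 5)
Your proposal is correct and follows the same overall route as the paper: both arguments reduce the corollary to Lemma \ref{LemmaCutoffSRWWithJump} by the triangle inequality, transporting everything through the bijection $w$ (noting $\tilde\mu = \mu\circ w$ and $w(+)=\mathbf{+1}$) and controlling $\sup_{u}\|P_{\cG}^{T}(w^{-1}(u),\cdot)-P_{\rm hyp}^{T}(u,\cdot)\|_{\mathrm{TV}}$ by (rate of disagreement of the generators) $\times\ T \lesssim L^{-9}\cdot L^{8}\log L \to 0$. The one place you genuinely improve on the paper's writeup is the comparison step: the paper bounds the semigroup difference by $T_{\delta,+}\max_{u\neq v}|Q_{\cG}(w^{-1}(u),w^{-1}(v))-Q_{\rm hyp}(u,v)|$, which as written invokes only the entrywise estimate of Corollary \ref{LemmaHyperComp} and would not suffice on its own over a state space of size $2^{2L-1}$; your superposition coupling correctly observes that $Q_{\cG}^{\rm per}\circ w^{-1}-Q_{\rm hyp}$ is a bona fide rate matrix (the two generators agree exactly on $\cN(\s)\cup\cN'(\s)$ by the very definition of $a_L$ and $a_L'$) with \emph{total} off-diagonal rate $\lesssim L^{-9}$, which is the fact the paper is implicitly using. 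So your argument is a rigorous rendering of the paper's one-line bound rather than a different proof.
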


\begin{proof}
By  Corollary \ref{LemmaHyperComp} and Lemma \ref{LemmaCutoffSRWWithJump},
\be
\lim_{L \rightarrow \infty} \max_{\s \in \cG} \| P_{\cG}^{T_{\delta,+}}(\s,\cdot) - \mu(\cdot) \|_{\mathrm{TV}} &\leq \lim_{L \rightarrow \infty} \max_{u \in \{-1,+1\}^{2L-1}} \| P_{\rm hyp}^{T_{\delta,+}}(u,\cdot) - \mu(\cdot) \|_{\mathrm{TV}} \\
& \qquad +  \lim_{L \rightarrow \infty} \max_{u \in \{-1,+1\}^{2L-1}} \| P_{\cG}^{T_{\delta,+}}(w^{-1}(u),\cdot) - P_{\rm hyp}^{T_{\delta,+}}(u,\cdot) \|_{\mathrm{TV}}\\
&\lesssim 0 +  \lim_{L \rightarrow \infty} T_{\delta,+} \max_{u \neq v \in \{-1,+1\}^{2L-1}} | Q_{\cG}(w^{-1}(u),w^{-1}(v)) - Q_{\rm hyp}(u,v) | \\
&\lesssim \lim_{L \rightarrow \infty} T_{\delta,+}  L^{-9} = 0.
\ee
By essentially the same calculation, observing that $w^{-1}(+\mathbf{1}) = + \in \cG$,
\be
\lim_{L \rightarrow \infty} \| P_{\cG}^{T_{\delta,-}}(+,\cdot) - \mu(\cdot) \|_{\mathrm{TV}} = 1.
\ee
This completes the proof.
\end{proof}

Fix $\sigma \in \Omega_{[1:L]^{2}}$ and let $\{X_{t}\}_{t \geq 0}$ be a copy of the original Markov process started at point $X_{0} = \sigma$. For $A \subset \Omega_{[1:L]^{2}}$, denote by
\be
\tau_{A} = \inf \{ t > 0 \, : \, X_{t} \in A \}
\ee
the hitting time of the set $A$.  Using the upper bound on the mixing time in Theorem \ref{ThmMainResPlus}, we have:

\begin{lemma} \label{LemmaHittingGround}
The hitting time $\tau_{\cG}$ satisfies
\be
\E_{\sigma}[\tau_{\cG}] \lesssim \beta^{9} e^{3.75 \beta},
\ee
uniformly in $\sigma \in \config$.
\end{lemma}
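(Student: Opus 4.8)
The plan is to deduce this from the mixing-time bound of Theorem~\ref{ThmMainResPlus} in two steps: first convert that bound into a bound on the expected hitting time of the all-plus ground state for the \emph{plus-boundary} chain, and then transfer the estimate to the periodic chain. For the first step, set $T_{0}=\tmix^{+}(L_{c})\lesssim\beta^{8}e^{3.75\beta}$. By Lemma~\ref{lem:dom} we have $\pi^{+}(\{+\})\geq 1-O(e^{-2\beta})\geq 3/4$, so for every starting configuration $\eta$, $\P_{\eta}[X_{T_{0}}=+]\geq\pi^{+}(\{+\})-\tv{e^{T_{0}\cL^{+}}(\eta,\cdot)-\pi^{+}(\cdot)}\geq 1/2$, whence $\P_{\eta}[\tau_{\{+\}}>T_{0}]\leq 1/2$; iterating with the Markov property at times $T_{0},2T_{0},\dots$ gives $\P_{\eta}[\tau_{\{+\}}>mT_{0}]\leq 2^{-m}$, so $\sup_{\eta}\E_{\eta}[\tau_{\{+\}}]\leq 2T_{0}\lesssim\beta^{8}e^{3.75\beta}$ for the plus-boundary dynamics. (Alternatively this is immediate from the hitting-time characterisation of mixing in \cite{Oliveira2012,Peres2015}, since $\pi^{+}(\{+\})>1/2$.)

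For the transfer, observe that reaching $\cG$ is exactly the event that the defect count $|p^{\mathrm{per}}(X_{t})|$ hits $0$, and that the induced defect dynamics (generator~\eqref{eq:genplaq}, rates~\eqref{eq:ratesp}) is a \emph{local} Markov chain whose jump rates do not involve the boundary condition. I would use this in either of two ways. The soft way is to couple the periodic spin process with a plus-boundary process run from a configuration that agrees with $\sigma$ on a large interior block, with common spin-flip clocks and Metropolis coins: at every site whose plaquette neighbourhood avoids the periodic ``seam'' (equivalently the plus boundary) the rates coincide, so the two defect configurations stay equal until the periodic chain first enters $\cG$, up to the $O(L)$ seam/boundary plaquettes where the models genuinely differ --- a correction of additive size $O(L)\ll\beta^{8}e^{3.75\beta}$ --- after which one invokes the first step. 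The robust way is to re-run the canonical-path/spectral-profile argument of Sections~\ref{SecConCanPath}--\ref{SecAllPlusRes} directly for the periodic chain, but applied to $\tau_{\cG}$ rather than to the mixing time: the path-length, energy and congestion estimates of Section~\ref{SecConCanPath}, the defect-count bookkeeping, and the conclusion~\eqref{IneqMainCanonicalPathConclusion} are all purely local and carry over verbatim with $p^{+}$ replaced by $p^{\mathrm{per}}$, and one then feeds the resulting cost bound into the hitting-time form of Theorem~\ref{ThmMainSpectralProfileBound} (equivalently the $\E_{\pi}[\tau_{A}]$ estimate of \cite{Goel2006}) at the level where the paths are truncated on reaching zero $p^{\mathrm{per}}$-defects, i.e. on reaching $\cG$. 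The integral is then the same one that appears in the proof of Theorem~\ref{ThmMainResPlus}, so it evaluates to $\lesssim\beta^{8}e^{3.75\beta}$, and the stated $\beta^{9}e^{3.75\beta}$ leaves room for one extra power of $\beta$.

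The delicate point, and essentially the only new content, is this transfer step. One must check that the periodic analysis does not inherit the huge congestion that would come from routing all $2^{2L-1}$ periodic ground states through $+$; this does not happen precisely because the canonical paths of Definition~\ref{DefFinalPath} are truncated on first entry to $\cG$, so they only ever decrease the defect count and never traverse an edge internal to the ground-state set. Granting that, all the relevant estimates transfer because they only ever use the local structure of the dynamics (the lower bound $e^{-4\beta}$ on the nonzero transition rates, and the fact that a single spin flip changes the defect configuration in $O(1)$ places), which is insensitive to the boundary condition; the first step and this locality observation are both routine.
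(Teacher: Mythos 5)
Your first step (bounding $\sup_{\eta}\E_{\eta}[\tau_{\{+\}}]$ for the plus-boundary chain by $2\,T_{\rm mix}^{+}(L_c)$) is fine, but the transfer to periodic boundary conditions --- which you correctly identify as the only real content of the lemma --- is where both of your proposed arguments break down. The ``soft'' coupling does not work: the Metropolis rates at a site depend on the local \emph{defect} configuration, and over the relevant timescale the defect pairs perform random walks that traverse the whole torus, so the two coupled processes do not remain in agreement away from the seam; moreover, even if they did, ``a correction of additive size $O(L)$'' has no meaning as a bound on a hitting time --- the periodic chain reaches $\cG$ only when \emph{all} defects, including those created and annihilated at the seam, have vanished, and your coupling gives no control over that event. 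The ``robust'' rerun is closer in spirit to what is needed, but it asserts without justification that the entire canonical-path construction (which relies on the rectangle geometry of $[1:L]^2$, the parity constraints on $\cB(\Lambda)$, and the functions $F(\sigma,i)$ and $n(\sigma)$ imported from \cite{ChlebounSpm2018} for plus boundary conditions) carries over ``verbatim'' to the torus, and it appeals to a ``hitting-time form of Theorem~\ref{ThmMainSpectralProfileBound}'' that is not stated anywhere --- that theorem, as given, bounds mixing times only.

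The paper's actual route avoids both problems, and the key idea is absent from your proposal: pass to the induced \emph{defect} dynamics \eqref{eq:genplaq}, whose mixing time is at most that of the spin dynamics because the defects are a deterministic function of the spins, and note that $\tau_{\cG}$ for the periodic spin chain is exactly the hitting time of the zero-defect configuration for the periodic defect chain. One then compares the periodic and plus \emph{defect} chains directly: they have the same stationary distribution on defect configurations and the plus rates are dominated by the periodic ones (proof of Lemma 7.8 of \cite{ChlebounSpm2018}), so tightness of the spectral profile \cite{kozma2007precision} gives $T_{\mathrm{mix},d}^{\mathrm{per}}\lesssim \beta\, T_{\mathrm{mix},d}^{+}\lesssim \beta^{9}e^{3.75\beta}$ --- this comparison, not slack in an integral, is where the extra power of $\beta$ comes from. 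Finally, since $\pi(\cG)=1-o(1)>1/2$ by Lemma~\ref{lem:dom}, Theorem 1 of \cite{Peres2015} converts this mixing bound into the stated bound on $\E_{\sigma}[\tau_{\cG}]$. The comparison at the level of the defect dynamics is the step that makes the plus-to-periodic transfer rigorous; without it (or a genuinely worked-out periodic canonical-path analysis), your argument has a gap.
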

\begin{proof}

Denote by $\T_{\mathrm{mix}, d}^{\mathrm{per}}$ and $T_{\mathrm{mix}, d}^{+}$ the mixing times of the defect dynamics given by the generators $\cQ_{[0:L]^{2}}^{(\mathrm{per})}$ and $\cQ_{[1:L]^{2}}^{+}$ respectively. Since the defect dynamics are a deterministic function of the spin dynamics, it is clear that
\be
T_{\mathrm{mix}, d}^{+} \leq T_{\mathrm{mix}}^{+}, \quad T_{\mathrm{mix}, d}^{\mathrm{per}} \leq T_{\mathrm{mix}}^{\mathrm{per}}.
\ee
By the tightness of the spectral profile (as stated in Theorem 1 of \cite{kozma2007precision}) and since the transition rates for the defect dynamics with all plus boundary conditions are dominated by those with periodic boundary conditions  (see the proof of Lemma 7.8 in \cite{ChlebounSpm2018}), and the stationary distribution of defects is the same under both dynamics, we also have
\be
T_{\mathrm{mix}, d}^{\mathrm{per}} \lesssim \beta T_{\mathrm{mix}, d}^{+}.
\ee
Combining these two bounds with Theorem \ref{ThmMainResPlus},
\be \label{IneqPerDefMixing}
T_{\mathrm{mix}, d}^{\mathrm{per}} \lesssim \beta T_{\mathrm{mix}, d}^{+} \lesssim \beta^{9} e^{3.75 \beta}.
\ee
Recalling that $\pi(\cG) = 1 - o(1)$ (by Lemma \ref{lem:dom}), Inequality \eqref{IneqPerDefMixing} and  Theorem 1 of \cite{Peres2015} immediately imply the result.

\end{proof}

We also have a related bound on the occupation time of $\cG$:
\begin{lemma} \label{LemmaDefinitelyInGS}
 For all $t \in [e^{3.5\b},\b^2e^{4\b}]$,
\be
\sup_{\s \in \cG}\P_{\sigma}[\{X_{t} \notin \cG\}] \lesssim L^{-0.5}.
\ee
\end{lemma}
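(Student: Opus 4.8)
The plan is to exploit the excursion structure developed in Section~\ref{SubSubSecInitCoupCon}: started from a ground state, the process alternates between long sojourns in $\cG$ (each of $\mathrm{Exp}(q_L)$ duration, $q_L = L^2 e^{-4\beta}$) and short excursions into $\cR_{\mathrm{init}} \cup \cR'$ (each of mean duration $\lesssim L$ by the coupling to simple random walk in Section~\ref{SubSubSecInitCoupCon}). Since the $\cG$-sojourns are vastly longer than the excursions, at any fixed time $t$ the process is overwhelmingly likely to sit in $\cG$; the only other contribution comes from the rare event of leaving $\cR$ entirely, which is controlled by Corollary~\ref{cor:noescape}.

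First I would reduce to the event that $\{X_s\}$ stays in $\cR$ up to time $t$. Since $t \le \beta^2 e^{4\beta}$, Corollary~\ref{cor:noescape} gives $\P_\sigma[\tau_{\cR^c} \le t] \lesssim e^{-\beta/4}$, which is already of the claimed order $L^{-1/2}$, so it suffices to bound $\P_\sigma[X_t \notin \cG,\ \tau_{\cR^c} > t]$. On $\{\tau_{\cR^c} > t\}$ the process is confined to $\cR$ on $[0,t]$ and the alternating description of Section~\ref{SubSubSecInitCoupCon} applies verbatim; let $\tau_1 < \sigma_1 < \tau_2 < \sigma_2 < \cdots$ be the successive times at which $\{X_s\}$ leaves and re-enters $\cG$, so that the excursions are the intervals $[\tau_i,\sigma_i)$.

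The argument then rests on two elementary estimates. (i) \emph{Every excursion overlapping $[0,t]$ is short, with overwhelming probability.} A single $\cG$-to-$\cG$ excursion consists of a number of sub-excursions that is stochastically dominated by a $\mathrm{Geom}(1/6)$ variable (from $\cR_{\mathrm{init}}$ the jump into $\cG$ has probability $\ge 1/6$), each sub-excursion having duration at most $\mathrm{Exp}(4) + \tau_{\min}$ with $\P[\tau_{\min} > CkL^2] \le e^{-k}$ by~\eqref{IneqEscDom}; combining the geometric and sub-exponential tails yields $\P[\text{a given excursion has length} > C'L^{2.2}] \le e^{-c\,e^{0.05\beta}}$. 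Since each excursion is preceded by an independent $\mathrm{Exp}(q_L)$ wait, the number of excursions started before $t$ is stochastically dominated by a $\mathrm{Poisson}(q_L t)$ variable and hence is $\lesssim \beta^2 e^{\beta}$ apart from a super-exponentially small event; a union bound then shows that, off an event of probability $O(e^{-\beta/4})$, every excursion overlapping $[0,t]$ has length $\le C'L^{2.2}$. (ii) \emph{The process is unlikely to leave $\cG$ during a short window ending at $t$.} Conditioning on $\mathcal{F}_{t-C'L^{2.2}}$ and using the memorylessness of the $\cG$-sojourn (and, when $X_{t-C'L^{2.2}} \notin \cG$, the fact that the next departure from $\cG$ occurs at least an $\mathrm{Exp}(q_L)$ time after the current excursion ends), we get $\P_\sigma[\,\exists i:\ \tau_i \in (t-C'L^{2.2},\,t]\,] \le q_L\, C'L^{2.2} = C'L^{4.2}e^{-4\beta} \lesssim e^{-1.9\beta}$.

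Putting these together finishes the proof: on the good event of (i), if $X_t \notin \cG$ then $t$ lies in some excursion $[\tau_i,\sigma_i)$ of length $\le C'L^{2.2}$, forcing $\tau_i \in (t-C'L^{2.2},\,t]$; hence $\P_\sigma[X_t\notin\cG,\ \tau_{\cR^c}>t] \le \P[\text{bad event of (i)}] + \P[\exists i:\ \tau_i \in (t-C'L^{2.2},t]] \lesssim e^{-\beta/4}$, and combining with the first reduction gives $\sup_{\sigma\in\cG}\P_\sigma[X_t\notin\cG] \lesssim e^{-\beta/4} = L^{-1/2}$, uniformly in $t$ in the stated range. The main obstacle is essentially bookkeeping: one must keep all the excursion-decomposition and Markov/independence statements on the event $\{\tau_{\cR^c}>t\}$ where they are valid, and assemble the (mildly delicate) single-excursion length tail from the geometric number of sub-excursions together with the random-walk hitting-time tail in~\eqref{IneqEscDom}. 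Every estimate here is wasteful — a renewal-theoretic variant instead shows $\P_\sigma[X_t\notin\cG] \lesssim q_L\,\E[\text{excursion length}] \lesssim L^3 e^{-4\beta}$ — so there is a great deal of slack.
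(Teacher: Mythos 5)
Your proposal is correct and follows essentially the same route as the paper's proof: reduce to $\{\tau_{\cR^c}>t\}$ via Corollary~\ref{cor:noescape} (which already supplies the dominant $L^{-0.5}$ error), dominate each $\cG$-to-$\cG$ excursion by a $\mathrm{Geom}(1/6)$ number of sub-excursions with lengths controlled by \eqref{IneqEscDom}, dominate the departure times from $\cG$ by a rate-$q_L$ Poisson process, and conclude that a fixed $t$ lands in an excursion only if a departure occurred in a short window before $t$. The only differences are cosmetic (the paper truncates excursion lengths at $L^{2}\log(L)^{4}$ rather than $L^{2.2}$, and leaves the final ``window'' step implicit where you spell it out via memorylessness).
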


\begin{proof}
Fix $\s \in \cG$, we first observe from Corollary \ref{cor:noescape} that
\begin{align}
\label{eq:inR}
\P_{\sigma}[\{X_{t} \notin \cG\}] \leq \P_{\sigma}[\{X_{t} \notin \cG\} \cap \{\tau_{\mathcal{R}^{c}} > t\}] + L^{-0.5}\,.
\end{align}
Let $\{Z_{i}^{(j)}\}_{i,j \in \mathbb{N}} \stackrel{i.i.d.}{\sim} \mu_{\rm srw}$ and $\{B_{i} \}_{i \in \mathbb{N}} \stackrel{i.i.d.}{\sim} Geom(\frac{1}{6})$. Recall from Section \ref{RemSRWCouplingExplicit} that $\mu_{\rm srw}$ is the distribution of the length of an excursion from $\cR_{\mathrm{init}}$ to $\cR_{\mathrm{init}}\cup \cR^c$, while the probability of going from $\cR_{\mathrm{init}}$ directly to $\cG$ is just below $\frac{1}{5}$, so that the distribution of $B_{i}$ stochastically dominates the total number of times that an excursion from $\cG$ to $\cG$ will ever enter $\cR_{\mathrm{init}}$ (see Equation \eqref{EqManyBasicCoupFacts}). Let $t_{1},t_{2},\ldots$ be the points of a Poisson process with rate $q_{L}\approx L^{-6}$, the total escape rate from an element of $\cG$. By this stochastic domination argument, we have the immediate bound

\be \label{IneqCouplingPoissApprox}
\P_{\sigma}[\{X_{t} \notin \cG\} \cap \{\tau_{\mathcal{R}^{c}} > t\}] \leq \P[t \in \cup_{k} [t_{k}, t_{k} + \sum_{j=1}^{B_{k}} Z_{k}^{(j)}]].
\ee
Standard concentration inequalities for Poisson random variables give estimates much stronger than the following upper bound
\be  \label{ConcPoissIneq}
\P[|\{k \, : \, t_{k} \leq t\}| > K] \lesssim L^{-2},
\ee
where $K = 10 q_{L} t$.
By \eqref{IneqEscDom} and standard concentration bounds on geometric random variables,
\be \label{ConcGeomIneq}
\P[\max_{1 \leq k \leq K} \sum_{j=1}^{B_{k}} Z_{k}^{(j)} \geq L^{2} \log(L)^{4}] \lesssim L^{-2}.
\ee
Combining Inequalities \eqref{eq:inR}, \eqref{IneqCouplingPoissApprox}, \eqref{ConcPoissIneq} and \eqref{ConcGeomIneq} completes the proof.

\end{proof}

We now put these results together:
\begin{proof} [Proof of Theorem \ref{ThmMainResPer}]

We consider $T_{0}, T$ satisfying $e^{4\beta} \leq T_{0} \leq T \leq \beta^2 e^{4\b}$. We have the bound
\be \label{IneqUpperBoundCutoffFinalForm}
\max_{A \subset \cG, \sigma \in \cG}|\P_{\sigma}[X_{T} \in A] - \pi(A)| &\leq \max_{A \subset \cG, \sigma \in \cG} | \P_{\sigma}[\hat{X}_{s_{\cG}(T)} \in A] - \pi(A)| + \max_{\sigma \in \cG} \P_{\sigma}[X_{T} \notin \cG] \\
&\leq \sup_{T_{0} \leq t \leq T} \max_{A \subset \cG, \sigma \in \cG} |\P_{\sigma}[\hat{X}_{t} \in A] - \pi(A)| + \max_{\sigma \in \cG}\P_{\sigma}[s_{\cG}(T) < T_{0}] + \max_{\sigma \in \cG}\P_{\sigma}[X_{t} \notin \cG].
\ee
Fix $\delta > 0$, and in the above expression consider the sequence of times
\be
T_{0} = T_{0,+} \equiv \left\lceil \frac{1 + \delta}{4} a_{L}^{-1} \log(L) \right\rceil, \qquad T=T_{+} \equiv \left\lceil \frac{1 + 2\delta}{4} a_{L}^{-1} \log(L) \right\rceil\,,
\ee
and note that $T_0\approx T_+ \approx \beta e^{4\beta}$.
As $L$ goes to infinity, the first term in \eqref{IneqUpperBoundCutoffFinalForm} goes to 0 by Corollary \ref{LemmaTraceMixing}, the second term goes to 0 by Corollary \ref{LemmaExpectedExcursionPerGr}, and the third term goes to 0 by Lemma \ref{LemmaDefinitelyInGS}, so we have:
\be \label{IneqMoreCutoffUpperEnding1}
\lim_{L \rightarrow \infty} \max_{A \subset \cG, \sigma \in \cG}|\P_{\sigma}[X_{T} \in A] - \pi(A)| = 0.
\ee
Applying Lemma \ref{lem:dom} (Domination of Ground States), this gives
\be \label{IneqMoreCutoffUpperEnding2}
\lim_{L \rightarrow \infty} \max_{A \subset \config, \sigma \in \cG}|\P_{\sigma}[X_{T} \in A] - \pi(A)| &\leq \lim_{L \rightarrow \infty} \Big(\max_{A \subset \cG, \sigma \in \config}|\P_{\sigma}[X_{T} \in A] - \pi(A)| + \pi(\cG^{c})\Big) = 0.
\ee

Define the burn-in time $T_{b} = \lfloor e^{3.8 \beta} \rfloor$, which is negligible compared to $T$. By the strong Markov property, and monotonicity of the convergence of Markov chains to their stationary measures in total variation (see for example \cite{LPW09}), we have
\be 
\max_{A \subset \config, \sigma \in \config}|\P_{\sigma}[X_{T + T_{b}} \in A] - \pi(A)| \leq \max_{A \subset \cG, \sigma \in \cG} \max_{T \leq t \leq T + T_{b}}|\P_{\sigma}[X_{t} \in A] - \pi(A)| + \max_{\sigma \in \config} \P_{\sigma}[\tau_{\cG} > T_{b}].
\ee  
Applying Equality \eqref{IneqMoreCutoffUpperEnding2} and Lemma \ref{LemmaHittingGround}, both terms on the right-hand side of this expression go to 0 as $L$ goes to infinity. Since $T_{b} = o(T)$, this completes the ``upper bound" in our proof of cutoff. 

In the other direction, by the same argument as leading to \eqref{IneqUpperBoundCutoffFinalForm}, we have for every set $A \subset \cG$
\be \label{IneqLowerBoundCutoffFinalForm}
|\P_{+}[X_{T} \in A] - \pi(A)| \geq  \min_{T_{0} \leq t \leq T}  |\P_{+}[\hat{X}_{t} \in A] - \pi(A)| - \P_{+}[s_{\cG}(T) < T_{0}] - \P_{+}[X_{t} \notin \cG].
\ee
Now consider times
\be
T_{0} = T_{0,-} \equiv \left\lceil \frac{1 -2 \delta}{4} a_{L}^{-1} \log(L) \right\rceil, \qquad T=T_{-} \equiv \left\lceil \frac{1 -\delta}{4} a_{L}^{-1} \log(L) \right\rceil.
\ee
 As $L$ goes to infinity, there exists an $A \subset \cG$ such that the first term in \eqref{IneqLowerBoundCutoffFinalForm} goes to 1 by Corollary \eqref{LemmaTraceMixing}; the other terms go to 0 by the same argument that precedes Inequality \eqref{IneqMoreCutoffUpperEnding1}. Thus, for this choice of $T$, we have
\be \label{IneqMoreCutoffLowerEnding1}
\lim_{L \rightarrow \infty} \max_{A \subset \cG, \sigma \in \cG}|\P_{\sigma}[X_{T} \in A] - \pi(A)| = 1.
\ee
The theorem now follows immediately from Equalities \eqref{IneqMoreCutoffUpperEnding2} and \eqref{IneqMoreCutoffLowerEnding1}.

\end{proof}

\section{Acknowledgments}

We thank Alessandra Faggionato, Fabio Martinelli and Cristina Toninelli for their help, hospitality and encouragement.


\bibliographystyle{abbrv}
\bibliography{PlaqBib}

\end{document}